\documentclass[smallextended]{svjour3}
\usepackage[top=20truemm,bottom=30truemm,left=30truemm,right=30truemm]{geometry}
\usepackage{amsmath,amssymb}
\usepackage{bm}
\usepackage{graphicx}
\usepackage[caption=false]{subfig}
\usepackage{verbatim}
\usepackage{wrapfig}
\usepackage{ascmac}
\usepackage{makeidx}
\usepackage{multirow}
\usepackage{amsfonts}
\usepackage{enumerate}
\usepackage{mathrsfs}

\usepackage{amsthm}
\theoremstyle{plain}
\spnewtheorem{thm}{Theorem}{\bf}{\it}
\spnewtheorem{prop}{Proposition}[section]{\bf}{\it}
\spnewtheorem{lem}[prop]{Lemma}{\bf}{\it}
\spnewtheorem{cor}[prop]{Corollary}{\bf}{\it}
\spnewtheorem{asm}{Assumption}{\bf}{\it}

\theoremstyle{remark}
\spnewtheorem{rem}[prop]{Remark}{\it}{}

\makeatletter
\renewenvironment{proof}[1][\proofname]{\par\pushQED{\qed}
  \normalfont
  \topsep6\p@\@plus6\p@ \trivlist
  \item[\hskip\labelsep{\bfseries #1}\@addpunct{\bfseries}]\ignorespaces
}{%
  \popQED\endtrivlist\@endpefalse
}
\renewcommand{\proofname}{Proof.}
\makeatother

\newcommand{\abs}[1]{\left\lvert#1\right\rvert}
\newcommand{\norm}[1]{\left\lVert#1\right\rVert}
\newcommand{\R}{\mathbb{R}}
\newcommand{\E}{\mathcal{E}}
\newcommand{\T}{\mathcal{T}}
\newcommand{\Ei}{\E^{\circ}}
\newcommand{\Eb}{\E^{\partial}}
\newcommand{\mean}[1]{\ensuremath{\{\!\!\{#1\}\!\!\}}}
\newcommand{\jump}[1]{\ensuremath{[\![#1]\!]}}
\newcommand{\coloneqq}{\mathrel{\mathop:}=}%
\newcommand{\pderiv}[3][]{\frac{\partial^{#1}#2}{\partial #3^{#1}}}
\newcommand{\diam}{\operatorname{diam}}

\newcommand{\supp}{\operatorname{supp}}
\newcommand{\dist}{\operatorname{dist}}
\usepackage{xcolor}
\definecolor{cyan20}{cmyk}{.2,0,0,0}

\makeatletter
    
    \@addtoreset{equation}{section}
  \makeatother
%


\journalname{}

\begin{document}
\title{Weak discrete maximum principle and $L^\infty$ analysis of the DG method for the Poisson equation on a polygonal domain}
\titlerunning{WMP and $L^\infty$ analysis of the DG method} 
\author{
Yuki Chiba \and Norikazu Saito 
}
\institute {
Y. Chiba \at Graduate School of Mathematical Sciences, The University of
Tokyo, Komaba 3-8-1, Meguro, Tokyo 153-8914, Japan.\\\email{ychiba@ms.u-tokyo.ac.jp}
\and 
N. Saito \at Graduate School of Mathematical Sciences, The University of
Tokyo, Komaba 3-8-1, Meguro, Tokyo 153-8914, Japan.\\\email{norikazu@g.ecc.u-tokyo.ac.jp}
}
\maketitle
%
%
 \begin{abstract}
We derive several $L^\infty$ error estimates for 
  the symmetric interior penalty (SIP) discontinuous Galerkin (DG)
  method applied to the Poisson equation in a two-dimensional polygonal
  domain. Both local and global estimates are examined. The weak maximum principle (WMP) for the discrete harmonic function is also
  established. We prove our $L^\infty$ estimates using this WMP and
  several $W^{2,p}$ and $W^{1,1}$ estimates for the Poisson
  equation. Numerical examples to validate our results are also presented. 
  \keywords{
discontinuous Galerkin method, 
pointwise error estimate, 
maximum principle,}
\subclass{
65N15, 	
65N30  	
}
  \end{abstract}

\section{Introduction}

The discontinuous Galerkin (DG) method, which was proposed originally by Reed and Hill \cite{osti_4491151} in 1973, is a powerful method for solving numerically a wide range of partial differential equations (PDEs). We use a discontinuous function which is a polynomial on each element and introduce the numerical flux on each element boundary. The DG scheme is then derived by controlling the numerical flux to ensure the local conservation law. 

For linear elliptic problems, the study of stability and convergence developed well in the early 2000s; see the standard references 
\cite{MR1885715}, 
\cite{riv08} 
and \cite{MR2485457} 
for the detail. However, most of those works are based on the $L^2$ and
DG energy norms, and a little is done using other norms. An exception
is \cite{MR2113680}, where the optimal order error estimate in the
$L^\infty$ norm was proved using the discrete Green function. However,
the DG scheme in \cite{MR2113680} is defined in the exactly-fitted
triangulation of a smooth domain; this restriction is somewhat
unrealistic for practical applications. From the view point of applications, the $L^p$ theory, especially the $L^\infty$ theory, plays an important role in the analysis of nonlinear problems. Therefore, the development of the $L^p$ theory for the DG method is a subject of great importance. 

Another important subject for confirming the validity of numerical methods is the discrete maximum principle (DMP). 
Nevertheless, only a few works has been devoted to DMP for DG
method. Horv\'{a}th and Mincsovics \cite{MR3015392} proved DMP for DG
method applied to the one-dimensional Poisson equation. Badia, Bonilla
and Hierro (\cite{MR3315069}, \cite{MR3646366}) proposed nonlinear DG
schemes satisfying DMP for linear convection-diffusion equations in the one and two space dimensions. 
However, to the best of our knowledge, no results are known regarding DMP for the standard DG method in the higher-dimensional space domain. 
  
In contrast, the $L^p$ theory and DMP have been actively studied regarding the
standard finite element method (FEM). 
The pioneering work by Ciarlet and Raviart \cite{MR0375802} studied $L^\infty$ and $W^{1,p}$ error estimates together with DMP; in particular, those error estimates were proved as a consequence of DMP. Then, the $L^\infty$ error estimates were proved using several methods; Scott \cite{MR0436617} applied the discrete Green function and 
Nitsche \cite{MR568857} utilized the weighted norm technique. 
Rannacher and Scott succeeded in deriving 
the optimal $W^{1,p}$ error estimate for $1\le p\le \infty$ in 
\cite{MR645661}. 
Detailed local and global pointwise estimates have been studied by many researchers, such as \cite{MR1464148}. 
Recently, the optimal order $W^{1,\infty}$ and $L^\infty$ stability and
error estimates were established for the Poisson equation defined in a
smooth domain; the effect of polyhedral approximations of a smooth
domain was precisely examined. See \cite{2018arXiv180400390K} for the detail.  

As is well known, the non-negativity assumption (non-obtuse assumption)
on the triangulation is necessary for DMP to hold in the standard
FEM. In this connection,  
Schatz \cite{MR551291} is noteworthy in this area for deriving the \emph{weak
maximum principle} (WMP) without the non-negativity assumption and applying it to the proof of the stability, local and global error estimates in the $L^\infty$ norm. 

In this paper, we are motivated by \cite{MR551291}, and extend the results of that study to the symmetric interior penalty (SIP) DG method which is one of the popular DG method for the Poisson equation. Our results are summarized as follows.  
Let $u$ be the solution of the Dirichlet boundary value problem for the
Poisson equation \eqref{eq:poisson} defined in a polygonal domain
$\Omega\subset\mathbb{R}^2$, and let $u_h$ be the solution of the
SIPDG method \eqref{eq:dg} in the finite dimensional space $V_h$ defined 
as \eqref{eq:vh}. Then, we have the $L^\infty$ interior error estimate
(see Theorem \ref{thm1})
\[
 \norm{u-u_h}_{L^\infty(\Omega_0)} \le C \left(\inf_{\chi \in V_h}\norm{u-\chi}_{\alpha(h),\Omega_1}+\norm{u-u_h}_{L^2(\Omega_1)}\right),
\]
where $\Omega_0$ and $\Omega_1$ are open subsets such that $\Omega_0
\subset \Omega_1 \subset \Omega$, and $C$ is independent of $h$ and the
choice of $\Omega_0$ and $\Omega_1$. This interior estimate is
valid under Assumption \ref{asm1} below, where $\alpha(h)$ and
$\|\cdot\|_{\alpha(h),\Omega_1}$ are defined. 
Using this interior error estimate, we prove
the WMP (see Theorem \ref{thm2})
\[
 \norm{u_h}_{L^\infty(\Omega)} \le C \norm{u_h}_{ L^\infty(\partial
 \Omega)}
\]
for the discrete harmonic function $u_h\in V_h$.      

Finally, under some assumptions on the triangulation (see Assumption
\ref{asm2}), we prove the $L^\infty$ error estimate
(see Theorem \ref{thm3}) 
\[
 \norm{u-u_h}_{L^\infty(\Omega)} \le C \left(\inf_{\chi \in V_h}\norm{u-\chi}_{\alpha(h),\Omega} + \norm{u-u_h}_{L^\infty(\partial \Omega)}\right) 
\]
for the solution $u$ of the Poisson equation \eqref{eq:poisson} and its
DG approximation $u_h$. As a matter of fact, the WMP is a key point of
the proof of this error estimate. 
Moreover, we obtain the $L^\infty$ error estimate of the form (see Corollary \ref{cor2})
    \[
       \norm{u-u_h}_{L^\infty(\Omega)} \le 
C h^{r}\norm{u}_{W^{1+r,\infty}(\Omega)},
    \]
where $r$ denotes the degree of approximate polynomials. 
Unfortunately, this error estimate is only sub-optimal even for the
piecewise linear element ($r=1$). This is because the Dirichlet boundary
condition is imposed ``weakly'' by the variational formulation (Nitsche'
method) in
the DG method. On the other hand, it is imposed ``strongly'' by the nodal
interpolation in the FEM. This implies that we further need to more deeply consider the imposition of the Dirichlet boundary condition
in the DG method. In particular, study of better, more precise estimates of
$\alpha(h)$ and $\norm{u-u_h}_{L^\infty(\partial \Omega)}$ are necessary, and will be a focus of our future works.

Our SIPDG scheme and main results, Theorems \ref{thm1}, \ref{thm2} and \ref{thm3}, are
presented in Section \ref{result}. 
The main tool of our analysis is several $W^{2,p}$ and $W^{1,1}$
estimates for solutions of the Poisson equation. Several local error
estimates developed in previous works (see \cite{MR0520174},
\cite{MR2113680} and \cite{MR0431753}) are also used. 
After having presented those preliminary results in Section \ref{sect2}, we state the proofs of
Theorems \ref{thm1}, \ref{thm2} and \ref{thm3} in Sections
\ref{s:I},  
\ref{s:II} and   
\ref{s:III}, respectively.  
Finally, we report the results of numerical
experiments to confirm the validity of our theoretical results in Section \ref{s:ne}. 

Before concluding this Introduction, here, we list the notation used in this
paper. 

\paragraph{Notation.} 
We follow the standard notation, for example, of \cite{MR2424078} as for function spaces and their norms. In particular, for $1\le p \le \infty$ and a positive integer $j$, we use the standard Lebesgue space $L^{p}(\mathcal{O})$ and the Sobolev space $W^{j,p}(\mathcal{O})$. Here and hereinafter, $\mathcal{O}$ denotes a bounded domain in $\R^2$. The semi-norms and norm of $W^{j,p}(\mathcal{O})$
 are denoted, respectively, by 
\[
    \abs{v}_{W^{i,p}(\mathcal{O})} = \left(\sum_{\abs{\alpha} = i}\norm{\pderiv[\alpha]{v}{x}}_{L^p(\mathcal{O})}^p\right)^{{1}/{p}},\quad 
    \norm{v}_{W^{j,p}(\mathcal{O})} = \left(\sum_{i = 0}^{j} \abs{v}_{W^{i,p}(\mathcal{O})}^p\right)^{{1}/{p}}.
\]
Letting $C_0^\infty(\mathcal{O})$ be the set of all infinity differentiable functions with compact support in $\mathcal{O}$, $W^{j,p}_0(\mathcal{O})$ denotes the closure of $C^\infty_0(\mathcal{O})$ in the $W^{j,p}(\mathcal{O})$ norm. The space $W^{1,p}_0(\mathcal{O})$ is characterized as $W^{j,p}_0(\mathcal{O})=\{v\in W^{1,p}(\mathcal{O})\colon v|_{\partial\mathcal{O}=0}\}$ if $\partial\mathcal{O}$ is a Lipschitz boundary. 
Let $p'$ be the H\"older conjugate exponent of $p$; $1/p+1/p'=1$. The inner product of $L^2(\mathcal{O})$ is denoted by $(\cdot,\cdot)_{\mathcal{O}}$. The $\R^d$-Lebesgue measure of $\mathcal{O}$ is denoted by $\abs{\mathcal{O}}_d$. We also use the fractional order Sobolev space $W^{s,p}(\mathcal{O})$ for $s>0$. As usual, we write as $H^s(\mathcal{O}) = W^{s,2}(\mathcal{O})$. 
For $\Gamma \subset \partial \mathcal{O}$, we define $W^{j,p}(\Gamma)$ and $H^s(\Gamma)$ using a surface measure $ds=ds_\Gamma$.

For $\mathcal{O}_1,\mathcal{O}_2\subset\R^2$, we write 
$\mathcal{O}_1\Subset\mathcal{O}_2$ and 
$\mathcal{O}_2\Supset\mathcal{O}_1$ to express $\mathcal{O}_1\subset\mathcal{O}_2$ and 
$\overline{\mathcal{O}_1}\subset\mathcal{O}_2$. 

Finally, the letter $C$ denotes a generic positive constant depending only on $\Omega$, $\partial\Omega$, the criterion $\sigma_0$ of the penalty parameter and the shape-regularity constant $C_*$ defined in Section \ref{result}.

\section{DG scheme and results}
\label{result}

Throughout this paper, letting $\Omega$ be a bounded polygonal domain in $\R^2$, we consider the Dirichlet boundary value problem for the Poisson equation
\begin{equation}
    \left\{
    \begin{array}{rcc}
    -\Delta u = f & \text{in} & \Omega \\
    u = g & \text{on} & \partial \Omega,
    \end{array}\right.\label{eq:poisson}
\end{equation}
where $f \in L^2(\Omega)$ and $g \in H^{1/2}(\partial\Omega)$. There exists an extension $\tilde{g}\in H^1(\Omega)$ such that $\tilde{g}=g$ on $\partial\Omega$ and $\|\tilde{g}\|_{H^1(\Omega)}\le C\|g\|_{H^{1/2}(\partial\Omega)}$. The following discussion does not 
depend on the way of extension. 

Then, the weak formulation of \eqref{eq:poisson} is stated as follows. 

\smallskip

\textup{(BVP;$f,g$)} Find $u=u_0+\tilde{g}\in H^1(\Omega)$ and $u_0\in H^1_0(\Omega)$ such that 
\begin{equation}
    \int_\Omega \nabla u_0 \cdot \nabla v ~dx = (f,v)_\Omega-\int_\Omega \nabla \tilde{g} \cdot \nabla v ~dx \quad {}^\forall v \in H^1_0(\Omega). \label{eq:weakhomodiriclet}
\end{equation}

The Lax--Milgram theory guarantees that \textup{(BVP;$f,g$)} admits a unique solution $u\in H^1(\Omega)$. The regularity of $u$ is well studied. See \cite[Theorem 1]{MR0466912} and \cite[Lemma 1.2]{MR551291} for the detail of the following results. 

\begin{prop}\label{prop:poisson}
    Let $0 < \alpha < 2\pi$ be the maximum (interior) angle of $\partial\Omega$, and set $\beta = \pi/\alpha$. Letting $g=0$, we suppose that $u \in H^1_0(\Omega)$ is the solution of \textup{(BVP;$f,0$)}. 
    \begin{enumerate}
    \item[\textup{(i)}] If $\Omega$ is convex ($\beta > 1$), then $u$ belongs to $H^2(\Omega) \cap H^1_0(\Omega)$, and we have
    \begin{equation}
    \abs{u}_{H^2(\Omega)} \le \norm{f}_{L^2(\Omega)}. \label{eq:convexregulality}
    \end{equation}
    \item[\textup{(ii)}] If $1/2 < \beta < 1$ and $f \in L^p(\Omega)$ for some $1<p<2/(2-\beta)$, then $u$ belongs to $ W^{2,p}(\Omega) \cap H^1_0(\Omega)$, and there exists a positive constant $C$ depending only on $\Omega$ and $p$ such that 
    \begin{equation}
    \norm{u}_{W^{2,p}(\Omega)} \le C \norm{f}_{L^p(\Omega)}. \label{eq:nonconvexregulality}
    \end{equation}
\end{enumerate}
\end{prop}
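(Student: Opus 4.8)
The plan is to reduce the problem to a local analysis by introducing a smooth partition of unity subordinate to a finite cover of $\overline{\Omega}$ by interior discs, neighbourhoods of the open edges of $\partial\Omega$, and neighbourhoods of the vertices. On the interior and edge pieces the classical elliptic regularity theory---interior estimates together with boundary estimates for the Dirichlet problem in a half-plane---bounds the $H^2$, respectively $W^{2,p}$, norm of $u$ by $\norm{f}_{L^p(\Omega)}$ with no restriction on $p$, so the entire difficulty is concentrated at the vertices. Near a vertex of interior opening angle $\omega$ I would pass to polar coordinates $(r,\theta)$ centred at the vertex and invoke the Kondrat'ev--Grisvard description of the Dirichlet problem in an infinite wedge: the harmonic functions vanishing on both edges are spanned by $r^{k\pi/\omega}\sin(k\pi\theta/\omega)$ for $k=1,2,\dots$, and $u$ admits, modulo an $H^2$ remainder controlled by $\norm{f}_{L^2(\Omega)}$, a finite expansion into those singular functions whose exponent falls in the critical range.

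For part (i), convexity forces every opening angle to satisfy $\omega<\pi$, so the smallest exponent $\pi/\omega$ exceeds $1$ and every admissible singular function already lies in $H^2$; hence $u\in H^2(\Omega)\cap H^1_0(\Omega)$. To pin down the sharp estimate with constant exactly one I would first take $u$ smooth and use the Rellich--Pohozaev (Grisvard) identity
\[
 \int_\Omega (\Delta u)^2\,dx = \int_\Omega \sum_{i,j}(\partial_i\partial_j u)^2\,dx + B_{\partial\Omega}(u),
\]
where $B_{\partial\Omega}(u)$ collects the boundary and corner contributions. Because $\partial\Omega$ is polygonal these contributions reduce to corner terms, and for a convex polygon each carries a favourable sign, so $B_{\partial\Omega}(u)\ge0$; the Hessian integral is therefore bounded by $\norm{\Delta u}_{L^2(\Omega)}^2$, and since it in turn dominates $\abs{u}_{H^2(\Omega)}^2$ we obtain $\abs{u}_{H^2(\Omega)}\le\norm{f}_{L^2(\Omega)}$. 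A density argument then removes the smoothness assumption.

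For part (ii), the hypothesis $1/2<\beta<1$ means the largest angle satisfies $\pi<\alpha<2\pi$, so $\Omega$ has a reentrant corner and the leading singular function $S=\chi(r)\,r^{\beta}\sin(\beta\theta)$, with $\chi$ a cut-off, is genuinely absent from $H^2$. The key computation is that its second derivatives behave like $r^{\beta-2}$, so that the local contribution
\[
 \int_0^{r_0} r^{(\beta-2)p}\,r\,dr
\]
is finite exactly when $p<2/(2-\beta)$, which is precisely the admissible range in the statement. As $\beta<1$ gives $2/(2-\beta)<2$, the $H^2$ remainder embeds into $W^{2,p}(\Omega)$ on the bounded domain; collecting the finitely many singular functions at the various corners together with this remainder yields $u\in W^{2,p}(\Omega)\cap H^1_0(\Omega)$ and the bound $\norm{u}_{W^{2,p}(\Omega)}\le C\norm{f}_{L^p(\Omega)}$.

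The step I expect to be the main obstacle is the corner analysis itself: making the singular-function expansion rigorous---via the Mellin transform and the location of its poles, or Grisvard's direct construction---and, for part (i), checking that the corner contributions $B_{\partial\Omega}(u)$ really have the correct sign for convex polygons so that the constant equals one. By comparison the $p$-integrability computation and the partition-of-unity bookkeeping are routine; the genuinely delicate inputs are the determination of the singular exponents and the sign of the boundary quadratic form.
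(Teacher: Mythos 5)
The paper does not prove this proposition; it simply cites \cite{MR0466912} and \cite[Lemma~1.2]{MR551291}, and your outline is precisely the standard Grisvard--Kondrat'ev argument carried out in those sources: interior/edge regularity plus corner-singularity expansion, the Rellich--Grisvard identity with nonnegative corner contributions on a convex polygon for the constant-one bound in (i), and the integrability count $(\beta-2)p+1>-1 \Leftrightarrow p<2/(2-\beta)$ for (ii). The only points left implicit --- the approximation/density step at the corners in (i) and the bound on the singular coefficient by $\norm{f}_{L^p(\Omega)}$ via the dual singular function in (ii) --- are exactly the delicate steps you flag, and they are handled in the cited references, so the proposal is correct and consistent with the paper's (referenced) proof.
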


\begin{rem}
Because $\beta > 1/2$, we have $2/(2-\beta) > 4/3$ and, consequently, \eqref{eq:nonconvexregulality} holds for some $4/3 < p \le 2$.
\end{rem}

Let $\{\T_h\}_h$ be a family of shape-regular and quasi-uniform triangulations of $\Omega$ (see \cite[(4.4.15), (4.4.16)]{bs08}). That is, there exists a positive constant $C_*$ satisfying
\begin{equation}
    \frac{h_K}{\rho_K} \le C_* ,\, \,\frac{h}{h_K} \le C_*\qquad (K\in\T_h\in\{\T_h\}_h). \label{eq:triangulation}
\end{equation}
Therein, $h_K$ and $\rho_K$ denote the diameters of the circumscribed and inscribed circles of $K$, respectively. Moreover, the granularity parameter $h$ is defined as $h =  \displaystyle \max_{K \in \T_h}h_K$. 
We set, for $1\le p\le \infty$, 
\begin{equation}
V^p  = V^p(\Omega) \coloneqq \{ v \in L^p(\Omega) \colon v|_K \in W^{1,p}(K) , (\nabla v)|_{\partial K} \in L^{p}(\partial K) {}^\forall K \in \T_h\}. 
\end{equation}
It is noteworthy that $v$ is a continuous function on $K \in \T_h$ if $v \in V^\infty$.
For a positive integer $r$, we define the finite element spaces $V_h$ and $\mathring V_h$ as 
\begin{align}
V_h &= V_h^r(\Omega) \coloneqq\{ v_h \in L^2(\Omega) \colon v_h|_K
 \in\mathcal{P}^r(K) \text{ for each }K \in \T_h\}, \label{eq:vh}\\
\mathring V_h & = \mathring V_h(\Omega) \coloneqq \{v_h \in V_h \colon
 \supp{v_h} \subset \Omega\},\label{eq:vho}
\end{align}
where $\mathcal{P}^r(K)$ denotes the set of all polynomials of degree $\le r$. 
For $\mathcal{O}\subset\Omega$, we set 
$V^p(\mathcal{O})=\{v|_{\mathcal{O}}\colon v\in V^p\}$, $V_h(\mathcal{O})=\{v|_{\mathcal{O}}\colon v\in V_h\}$ and 
$\mathring V_h(\mathcal{O}) \coloneqq \{v_h \in V_h(\mathcal{O}) \colon \supp{v_h} \subset \mathcal{O}\}$.

We let $\E_h$ be the set of all edges of $K \in \T_h$, and set 
\[
    \Eb_h \coloneqq \{e \in \E_h \colon e \subset \partial \Omega \},\quad \Ei_h \coloneqq \E_h \setminus \Eb_h.
\]
For $v \in V^p$ and $e \in \E_h$, we define $\mean{\cdot}$ and $\jump{\cdot}$ as follows. 
If $e \in \Ei_h$, we set 
\begin{align}
\mean{v} \coloneqq \frac{1}{2}(v_1 +v_2)\,,\,\,&
\jump{v} \coloneqq v_1n_1 + v_2n_2 \,,\,\\
\mean{\nabla v} \coloneqq \frac{1}{2}(\nabla v_1 + \nabla v_2)\,,\,&
\jump{\nabla v} \coloneqq \nabla v_1 \cdot n_1 + \nabla v_2 \cdot  n_2  \,.
\end{align}
If $e \in \Eb_h$, we set 
\begin{align}
\mean{v} \coloneqq v \,,\,\jump{v} \coloneqq v n\,,\,\mean{\nabla v} \coloneqq \nabla v \,,\,\jump{\nabla v} \coloneqq \nabla v \cdot n \,.
\end{align}
Therein, for $e \in \Ei_h$, there exist distinct $K_1,\,K_2 \in \T_h$ satisfying $e \subset \partial K_1 \cap \partial K_2$ and $v_i = v|_{K_i}$, where 
$n_i$ denotes the outward unit normal vector to $e$ of $K_i$, and $n$ denotes the outward unit normal vector on $\partial \Omega$. 

We define norm $\norm{v}_{V^p(\mathcal{O})}$ on $V^p(\mathcal{O})$, where $\mathcal{O}=\Omega$ or $\mathcal{O}\subset\Omega$, as 
\[
 \norm{v}_{V^p(\mathcal{O})}^p \coloneqq
\sum_{K \in \T_h} \norm{v}_{W^{1,p}(K \cap \mathcal{O})}^p  + \sum_{e \in \E_h} h_e^{1-p}\norm{\jump{v}}_{L^p(e \cap \overline{\mathcal{O}})}^p 
+ \sum_{e \in \E_h} h_e \norm{\mean{\nabla v}}_{L^p(e \cap \overline{\mathcal{O}})}^p 
\]
and 
\[
 \norm{v}_{V^\infty(\mathcal{O})} \coloneqq \max_{K \in \T_h} \norm{v}_{W^{1,\infty}(K \cap \mathcal{O})} 
 + \max_{e \in \E_h}h_e^{-1}\norm{\jump{v}}_{L^\infty(e\cap\overline{\mathcal{O}})}+ \max_{e \in \E_h} \norm{\mean{\nabla v}}_{L^\infty(e\cap\overline{\mathcal{O}})},
\]
where $h_e = (h_{K_1} + h_{K_2})/2$ if $e \in \Ei_h$ and $h_e = h_K$ if $e \in  \Eb_h$.

Letting $1 \le p ,p'\le \infty$ and $1/p+1/p'=1$, we introduce the DG bilinear form on $V^p\times V^{p'}$ as
\begin{multline}
a(u,v)  \coloneqq \sum_{K \in \T_h} \int_K \nabla u \cdot \nabla v ~dx \\
-\sum_{e \in \E_h} \int_e(\mean{\nabla u}\jump{v} + \mean{\nabla v}\jump{u})~ ds 
 + \sum_{e \in \E_h}\frac{\sigma}{h_e}\int_e \jump{u}\jump{v} ~ ds \label{eq:bilinear}
\end{multline}
for $u \in V^p$ and $v \in V^{p'}$. Herein, $\sigma$ is a sufficiently large constant. 
The linear form $F$ on $V^2$ is defined by
\begin{equation}
    F(v) \coloneqq \int_\Omega fv ~ dx + \sum_{e \in \Eb_h}\int_e g\left( \frac{\sigma}{h_e}v - \nabla v \cdot n\right)~ds.
\end{equation}
Now we can state the DG scheme to be addressed in this paper: 
\begin{equation}
\textup{(DG;$f,g$)}\quad 
\text{Find}\quad u_h \in V_h \quad \text{ s.t. }\quad  
a(u_h,\chi) = F(\chi) \quad {}^\forall \chi \in V_h.
\label{eq:dg}
\end{equation}
This scheme is usually called the symmetric interior penalty DG (SIPDG) method, and the $L^2$ theory is well-developed at present (see \cite{MR1885715}). For example, the DG bilinear form $a$ is continuous in the sense that, for any $1 \le p \le \infty$, there exists $C>0$ satisfying
    \begin{equation}
    a(u,v) \le C \norm{u}_{V^p}\norm{v}_{V^{p'}} \quad {}^\forall u \in V^p, {}^\forall v \in V^{p'}. \label{eq:dgconti}
    \end{equation}
Moreover, there exists $\sigma_0>0$ such that, if $\sigma\ge \sigma_0$, then we have 
    \begin{equation}
    a(\chi,\chi) \ge C \norm{\chi}_{V^2}^2 \quad {}^\forall \chi \in V_h. \label{eq:dgcoercive}
    \end{equation}
Consequently, \textup{(DG;$f,g$)} with $\sigma\ge \sigma_0$ admits a unique solution $u_h \in V_h$ and, it satisfies 
\[
\|u_h\|_{V^2} \le \sup_{\chi\in V_h}\frac{F(\chi)}{\|\chi\|_{V^2}}.
\]
If the solution $u$ of \textup{(BVP;$f,g$)} belongs to $u \in H^s(\Omega)$ for some $s > \frac{3}{2}$, we have
\begin{equation}
    a(u,v) = F(v) \quad {}^\forall v \in V^2. \label{eq:dgconsistency}
\end{equation}
As a result, we have the Galerkin orthogonality (consistency) 
\begin{equation}
  a(u-u_h,\chi) = 0 \quad {}^\forall \chi \in V_h. \label{eq:galerkin}
\end{equation}
Our main theorem below will be formulated using the pair of functions $u\in V^\infty$ and $u_h\in V_h$ satisfying \eqref{eq:galerkin}. More generally, we consider $u\in V^\infty$ and $u_h\in V_h$ satisfying 
\begin{equation}
a(u-u_h,\chi) = 0 \quad {}^\forall \chi \in \mathring V_h. 
\label{eq:go}
\end{equation}
Below, we always assume that $\sigma\ge \sigma_0$. 

We are now in a position to state the main results of this paper, but to do so, we need additional notations. Suppose that we are given an open disk $D \Subset \Omega$ with center $x_0$ and radius $R$. 
In the disk $D$, we consider an auxiliary Neumann problem: 
\begin{equation}
\left\{
    \begin{array}{rcc}
    -\Delta w + w = \varphi & \text{in} & D \\
\partial_n w = 0 & \text{on} & \partial D,
\end{array}\right. 
\label{eq:circle}
\end{equation}
where $\partial_n=n\cdot \nabla$ denotes the outward normal derivative to $\partial D$. 
Because $\partial D$ is smooth, for a given $\varphi\in L^2(D)$, there exists a unique solution $w\in H^2(D)$ of \eqref{eq:circle}; this correspondence is denoted by $w=\mathcal{G}_D\varphi$. We recall the $W^{2,p}$ and $W^{1,1}$ regularity results in Section \ref{sect2}. The DG bilinear form $a_D^1$ corresponding to \eqref{eq:circle} is introduced as 
\begin{multline}
a_D^1(u,v)  \coloneqq \sum_{K \in \T_h} \int_{K \cap D}( \nabla u \cdot \nabla v  +  uv)~dx \\
-\sum_{e \in \E_h} \int_{e\cap \overline D}(\mean{\nabla u}\jump{v} + \mean{\nabla v}\jump{u})~ds 
 + \sum_{e \in \E_h}\frac{\sigma}{h_e}\int_{e\cap \overline D} \jump{u}\jump{v} ~ds. \label{eq:bilinearoncircle}
\end{multline}

We introduce the operator $\Pi^1_h$ of $L^2(D)\to V_h(D)$ as
\begin{equation}
a_D^1(\mathcal{G}_D\varphi - \Pi_h^1 \varphi , \chi) = 0 \quad {}^\forall \chi \in V_h(D) \label{eq:circlegalerkin}
\end{equation}
and make the assumption below: 
\begin{asm}\label{asm1}
There exist a function $\alpha$ of $\R_+=(0,\infty)\to \R_+$ and constant $C>0$ which are independent of $h$ such that 
\begin{gather}
\mbox{$\alpha$ is bounded in a neighborhood of $0$;}\label{eq:asm1a}\\
a_D^1(\mathcal{G}_D^1 \varphi -\Pi_h^1 \varphi,v) \le C h\alpha(h) \norm{\varphi}_{L^2(D)}\norm{v}_{V^{\infty}(D)} \quad {}^\forall \varphi \in L^2(D). {}^\forall v \in V^{\infty}(D) \label{eq:asm1}
\end{gather}
for a sufficiently small $h$.
\end{asm}

\begin{rem}
In view of \eqref{eq:circleconti} and \eqref{eq:circlel1} of Proposition \ref{prop:D}, 
we can take at least $\alpha(h) = 1$.
\end{rem}

Using this $\alpha(h)$, we define $\norm{\cdot}_{\alpha(h),\Omega_0}$ as
\begin{equation}
\norm{v}_{\alpha(h),\mathcal{O}} \coloneqq \norm{v}_{L^\infty(\mathcal{O})} + \alpha(h)\norm{v}_{V^\infty(\mathcal{O})}
\end{equation}
for $\mathcal{O}\subset \Omega$ or $\mathcal{O}=\Omega$. 

Our first result is the following interior error estimate in the $L^\infty$ norm. 


\begin{thm}[$L^\infty$ interior error estimate]\label{thm1}
Letting $u \in V^\infty,u_h \in V_h$ satisfy \eqref{eq:go} and supposing that $\kappa>0$ and open sets $\Omega_0 \subset \Omega_1 \subset \Omega$ satisfy $\operatorname{dist}(\Omega_0,\partial\Omega_1) \ge \kappa h$, then we have, under Assumption \ref{asm1}, 
\begin{equation}
\norm{u-u_h}_{L^\infty(\Omega_0)} \le C \left(\inf_{\chi \in V_h}\norm{u-\chi}_{\alpha(h),\Omega_1}+\norm{u-u_h}_{L^2(\Omega_1)}\right) \label{eq:localerror}
\end{equation}
for a sufficiently small $h$.
\end{thm}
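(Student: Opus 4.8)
The plan is to adapt the duality-based interior maximum-norm technique of Nitsche--Schatz and Schatz--Wahlbin to the SIPDG setting, using the auxiliary Neumann problem on the disk $D$ as a localized discrete Green's function and exploiting the orthogonality \eqref{eq:go}. Write $e=u-u_h$. Since $u\in V^\infty$ is continuous on each element and $u_h$ is a polynomial there, $e$ is continuous on each $K$, so $\norm{e}_{L^\infty(\Omega_0)}=\abs{e(x_0)}$ for some $x_0\in\overline{\Omega_0}$ lying in the closure of an element $K_0$. I would first build a regularized Dirac mass $\delta_h\in V_h$ supported in $K_0$ with $(\delta_h,q)=q(x_0)$ for all $q\in\mathcal{P}^r(K_0)$ and with $\norm{\delta_h}_{L^1(K_0)}\le C$, $\norm{\delta_h}_{L^2(K_0)}\le Ch^{-1}$. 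For an arbitrary $\chi\in V_h$, since $\chi-u_h$ is a polynomial on $K_0$ we have $(\chi-u_h)(x_0)=(\delta_h,\chi-u_h)$, hence
\[
e(x_0)=(u-\chi)(x_0)+(\delta_h,\chi-u)+(\delta_h,e),
\]
where the first two terms are bounded by $C\norm{u-\chi}_{L^\infty(\Omega_1)}$ via $\norm{\delta_h}_{L^1(K_0)}\le C$. Thus the whole difficulty is concentrated in the scalar $(\delta_h,e)$.

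To treat $(\delta_h,e)$ I take the disk $D=D(x_0,R)$ with $R$ a fixed fraction of $\kappa h$, so that $D\Subset\Omega_1$; this is exactly where the hypothesis $\operatorname{dist}(\Omega_0,\partial\Omega_1)\ge\kappa h$ is used, keeping every local object centred at $x_0$ inside $\Omega_1$. Let $w=\mathcal{G}_D\delta_h$ and let $g_h=\Pi_h^1\delta_h$ be its DG--Galerkin projection from \eqref{eq:circlegalerkin}. By the consistency of $a_D^1$ for the smooth solution $w$ (cf.\ \eqref{eq:dgconsistency}) we have $(\delta_h,e)_D=a_D^1(w,e)$, up to boundary contributions on $\partial D$ that are estimated together with the mass term below. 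I split $a_D^1(w,e)=a_D^1(w-g_h,e)+a_D^1(g_h,e)$. In the first term, \eqref{eq:circlegalerkin} lets me subtract $\zeta=\chi-u_h\in V_h(D)$, giving $a_D^1(w-g_h,e)=a_D^1(w-g_h,u-\chi)$; Assumption \ref{asm1} with $\varphi=\delta_h$ and $\norm{\delta_h}_{L^2}\le Ch^{-1}$ then yields
\[
a_D^1(w-g_h,u-\chi)\le C\alpha(h)\norm{u-\chi}_{V^\infty(\Omega_1)},
\]
which is precisely the weighted contribution appearing in $\norm{u-\chi}_{\alpha(h),\Omega_1}$.

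The second term $a_D^1(g_h,e)=a_D^1(e,g_h)$ is where the orthogonality \eqref{eq:go} enters. Using $a_D^1(e,g_h)=(\text{pure-Poisson part})+(e,g_h)_D$, I peel off the mass contribution $(e,g_h)_D\le\norm{e}_{L^2(\Omega_1)}\norm{g_h}_{L^2(D)}$, where $\norm{g_h}_{L^2(D)}\le C$ uniformly in $h$ by the regularity of Proposition \ref{prop:D}. For the pure-Poisson part I introduce a smooth cut-off $\omega$ equal to $1$ near $x_0$ and supported in $D$ and interpolate, so that $I_h(\omega g_h)\in\mathring V_h$; since this part agrees with the global form $a$ on functions supported in $D$, \eqref{eq:go} gives $a(e,I_h(\omega g_h))=0$, and only the tail $(1-\omega)g_h$ and the superapproximation error $\omega g_h-I_h(\omega g_h)$ survive. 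Both are estimated through the continuity \eqref{eq:dgconti} paired with a local energy norm of $e$, which is in turn reduced to $\norm{e}_{L^2(\Omega_1)}$ and the approximation terms by a Caccioppoli-type local energy estimate as developed in \cite{MR0520174}, \cite{MR2113680}, \cite{MR0431753}.

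The main obstacle is this localization bookkeeping: one must establish the $h$-uniform stability and off-diagonal smallness of the discrete Green's function $g_h=\Pi_h^1\delta_h$ (and of $w$) in the $L^2$- and energy-type norms entering the tail and commutator estimates, and must keep the mass-augmented form $a_D^1$ compatible with the pure Poisson orthogonality \eqref{eq:go}. The $W^{2,p}$ and $W^{1,1}$ regularity for $w=\mathcal{G}_D\delta_h$ recorded in Proposition \ref{prop:D}, together with the cited interior error estimates, are the engine for these bounds; crucially, the extra $(\cdot,\cdot)_D$ terms produced by the mass augmentation are retained rather than discarded, since they are what yield the $\norm{e}_{L^2(\Omega_1)}$ term. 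Collecting the three estimates, applying the reproduction and $L^1$/$L^2$ bounds for $\delta_h$, and finally taking the infimum over $\chi\in V_h$ gives \eqref{eq:localerror}.
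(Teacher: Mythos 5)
Your opening reduction (the regularized Dirac $\delta_h$, the identity $e(x_0)=(u-\chi)(x_0)+(\delta_h,\chi-u)+(\delta_h,e)$, and the bound $a_D^1(w-g_h,e)=a_D^1(w-g_h,u-\chi)\le C\alpha(h)\norm{u-\chi}_{V^\infty}$ via Assumption~\ref{asm1} with $\norm{\delta_h}_{L^2}\le Ch^{-1}$) is sound and is essentially the mechanism behind the paper's Lemma~\ref{lem1}. The genuine gap is the term $a_D^1(g_h,e)$, which is precisely what you flag as ``the main obstacle'' and then leave to bookkeeping; as described it does not close. First, with $R$ a fixed fraction of $\kappa h$ the disk $D$ contains only $O(1)$ elements, so there is no separation between the singularity of $g_h=\Pi_h^1\delta_h$ and $\partial D$: the ``off-diagonal smallness'' and tail estimates you invoke have no room to operate, and the Neumann problem $-\Delta w+w=\varphi$ on a disk of radius $O(h)$ does not satisfy \eqref{eq:circleregulality}--\eqref{eq:circlel1} with $h$-uniform constants (the zeroth-order term degenerates under rescaling). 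Second, $\norm{g_h}_{L^2(D)}\le C$ does not follow from Proposition~\ref{prop:D}, which only records consistency, continuity and coercivity; coercivity alone gives $\norm{g_h}_{L^2(D)}\le Ch^{-1}$, and the uniform bound needs an Aubin--Nitsche duality step combined with $W^{1,1}\hookrightarrow L^2$ --- available for a fixed-radius disk, but again not for $R\sim h$. Third, and most importantly, the surviving tail and superapproximation terms in your splitting couple a local energy norm of $e$ with $g_h$, and reducing them produces $\norm{u-\chi}_{V^\infty(\Omega_1)}$ \emph{without} the weight $\alpha(h)$; since $\alpha(h)$ may be $o(1)$, this yields a strictly weaker estimate than \eqref{eq:localerror}. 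In the paper the factor $\alpha(h)$ enters only through Assumption~\ref{asm1}, and every other contribution is funneled into $\norm{u-\chi}_{L^\infty}$ or $\norm{u-u_h}_{L^2}$.

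The paper avoids these difficulties by a different organization: it cuts off $v=u-\chi$ rather than the Green's function, forms $\tilde v=\omega v$ and its disk projection $\tilde v_h$, and observes that $\tilde v_h-v_h$ is discrete $a_D^1$-harmonic near $x_0$. A separate pointwise bound for discrete harmonic functions (Lemma~\ref{lem2}, proved by duality against a regularized delta using the $W^{1,1}$ regularity \eqref{eq:circlel1} and the superapproximation Proposition~\ref{prop:superapprx}) then gives $\abs{(\tilde v_h-v_h)(x_0)}\le C\norm{\tilde v_h-v_h}_{L^2(\frac14 D)}$, which splits cleanly into the Assumption-\ref{asm1} term and $\norm{v-v_h}_{L^2}$. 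The remaining mismatch between the mass-augmented orthogonality of Lemma~\ref{thm1:coercive} and the pure-Poisson orthogonality \eqref{eq:go} is bridged by the $L^p$-bootstrapping Lemma~\ref{lem3} (duality through $\mathcal{G}_D$, elliptic regularity, Riesz--Thorin), not by decay of a discrete Green's function. To salvage your route you would need at minimum a fixed-radius disk, a proved $L^1$-type stability and decay estimate for $g_h$, and an argument showing that the tail terms carry the factor $\alpha(h)$; none of these is supplied.
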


The next result is the weak discrete maximum principle.

\begin{thm}[Weak discrete maximum principle]\label{thm2}
Supposing that Assumption \ref{asm1} is satisfied and letting $u_h \in V_h$ be the discrete harmonic function, i.e.,  
    \begin{equation}
    a(u_h,\chi) = 0 \quad{}^\forall \chi \in \mathring V_h, \label{eq:discreteharmonic}
    \end{equation}
then we have 
    \begin{equation}
    \norm{u_h}_{L^\infty(\Omega)} \le C \norm{u_h}_{ L^\infty(\partial \Omega)} \label{eq:dmp}
    \end{equation}
    for a sufficiently small $h$. 
    \end{thm}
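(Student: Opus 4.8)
The strategy is to reduce \eqref{eq:dmp} to the interior estimate of Theorem \ref{thm1} by comparing $u_h$ with a genuinely harmonic function that carries the same boundary data. Writing $M \coloneqq \norm{u_h}_{L^\infty(\partial\Omega)}$, I would let $\bar u$ denote the bounded harmonic extension into $\Omega$ of the trace of $u_h$ on $\partial\Omega$; the continuous maximum principle gives $\norm{\bar u}_{L^\infty(\Omega)} \le M$, so it remains to bound $\norm{\bar u - u_h}_{L^\infty(\Omega)}$ by $CM$. The decisive point is that the pair $(\bar u, u_h)$ satisfies the orthogonality \eqref{eq:go}: any $\chi \in \mathring V_h$ is supported compactly in $\Omega$, where $\bar u$ is real-analytic, so element-wise integration by parts together with $\Delta\bar u = 0$ and $\jump{\bar u}=0$ yields $a(\bar u,\chi)=0$, while $a(u_h,\chi)=0$ by \eqref{eq:discreteharmonic}; subtracting gives exactly \eqref{eq:go}.

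First I would fix $\Omega_0 = \{x\in\Omega : \dist(x,\partial\Omega) \ge 2\kappa h\}$ and $\Omega_1 = \{x\in\Omega : \dist(x,\partial\Omega)\ge \kappa h\}$, so that $\dist(\Omega_0,\partial\Omega_1)\ge \kappa h$, and apply Theorem \ref{thm1} to $(\bar u, u_h)$ on these sets. Combined with $\norm{\bar u}_{L^\infty(\Omega_0)}\le M$ this gives
\[
\norm{u_h}_{L^\infty(\Omega_0)} \le M + C\Big(\inf_{\chi\in V_h}\norm{\bar u - \chi}_{\alpha(h),\Omega_1} + \norm{\bar u - u_h}_{L^2(\Omega_1)}\Big),
\]
and the task becomes to bound the two terms in parentheses by $CM$. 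For the best-approximation term I would invoke interior gradient bounds for the harmonic $\bar u$ and the $W^{2,p}$ regularity of Proposition \ref{prop:poisson} to control $\bar u$ in the Sobolev norms entering $\norm{\cdot}_{\alpha(h),\Omega_1}$ by $CM$; together with a quasi-interpolation estimate and the boundedness of $\alpha$ near $0$ from \eqref{eq:asm1a}, this should yield $\inf_\chi\norm{\bar u-\chi}_{\alpha(h),\Omega_1}\le CM$.

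The $L^2$ term is the crux, since the crude bound $\norm{\bar u - u_h}_{L^2}\le C(\norm{\bar u}_{L^2}+\norm{u_h}_{L^2})$ merely reintroduces the unknown $\norm{u_h}_{L^\infty(\Omega)}$. Here I would exploit the shared boundary data by comparing both functions with the full Galerkin approximation $\hat u_h\in V_h$ of $\bar u$ (orthogonal against all of $V_h$): the standard $L^2$ error estimate controls $\norm{\bar u - \hat u_h}_{L^2}$, while the remainder $u_h-\hat u_h$ is discrete harmonic with boundary values equal to the boundary error of $\hat u_h$, which can be estimated through the coercivity \eqref{eq:dgcoercive} applied to a low-energy extension of those boundary values supported in the collar. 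Finally, for the collar $\{\dist < 2\kappa h\}$, which Theorem \ref{thm1} does not reach, I would argue locally: on each boundary element $K$ the function $u_h$ is a single polynomial, bounded by $M$ on its edge in $\partial\Omega$ and by $CM$ on the part of $K$ lying in $\Omega_0$, so a norm-equivalence (inverse) estimate on $\mathcal{P}^r(K)$ propagates the bound to all of $K$.

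I expect the main obstacle to be the $L^2$-error and collar analysis in the presence of the weak, Nitsche-type imposition of the boundary data. Because the trace of $u_h$ is only piecewise polynomial and discontinuous at the edge endpoints, its harmonic extension $\bar u$ carries corner singularities and, at reentrant corners, fails to lie in $V^\infty$; the comparison must therefore be set up with a suitable regularization of the boundary datum (or of $\bar u$ near the corners) that keeps the comparison function in $V^\infty$ while preserving $\norm{\bar u}_{L^\infty(\Omega)}\le CM$, and every approximation and $L^2$ estimate must be carried out with the reduced $W^{2,p}$ and $W^{1,1}$ regularity of Proposition \ref{prop:poisson} rather than with $H^2$. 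The delicate bookkeeping is to arrange the extension in the collar so that the $h^{-1/2}$ loss from its energy is exactly offset by the $h^{1/2}$ gain in the boundary-error factor, leaving a clean constant $C$ independent of $h$.
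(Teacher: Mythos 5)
Your proposal takes a genuinely different route from the paper, but it has gaps that I do not think can be repaired within Assumption \ref{asm1}. The central obstruction is the comparator $\bar u$ itself. The trace of $u_h$ on $\partial\Omega$ is a \emph{discontinuous} piecewise polynomial (it jumps at every boundary node), so its harmonic extension behaves like $M\arg(z)$ near each node: it has infinite Dirichlet energy, hence $\bar u\notin H^1(\Omega)$, is not in $V^2$ or $V^\infty$, and Theorem \ref{thm1} and the Galerkin machinery do not apply to it. You anticipate this and propose regularizing the boundary datum, but the datum genuinely oscillates at scale $h$ (its Lipschitz constant is $O(M/h)$ by inverse estimates), so any admissible regularization still yields $\abs{\nabla\bar u}\sim M/h$ at distance $\kappa h$ from $\partial\Omega$; consequently $\inf_{\chi}\norm{\bar u-\chi}_{V^\infty(\Omega_1)}\gtrsim M/h$ on the elements nearest the boundary, and the term $\alpha(h)\inf_\chi\norm{\bar u-\chi}_{V^\infty(\Omega_1)}$ is $O(\alpha(h)M/h)$. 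Assumption \ref{asm1} only guarantees $\alpha(h)=O(1)$, so your claim that this term is $\le CM$ fails; you would need $\alpha(h)=O(h)$, which is not assumed. The two remaining steps also do not close: (i) the energy argument for $\norm{u_h-\hat u_h}_{L^2}$ loses a factor $h^{-1/2}$ from the collar extension, and the compensating $h^{1/2}$ gain you hope for in the boundary error of $\hat u_h$ is unavailable because $\hat u_h$ approximates a function whose $H^1$ norm is already $O(Mh^{-1/2})$ (and bounding the $L^2$ norm of a discrete harmonic function by its boundary data is itself essentially the statement being proved, so there is a circularity risk); (ii) the collar argument fails because a boundary element $K$ satisfies $K\cap\Omega_0=\emptyset$ (its diameter is $\le h<2\kappa h$), and a polynomial in $\mathcal{P}^r(K)$ is not controlled on $K$ by its values on a single edge, while elements in the collar not touching $\partial\Omega$ have no boundary control at all and, $u_h$ being discontinuous, inherit nothing from their neighbours.

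The paper avoids introducing any continuous harmonic comparator. It runs a Schatz-type duality argument directly on $u_h$: after reducing to $\norm{u_h}_{L^\infty(\Omega)}\le C\rho^{-1}\norm{u_h}_{L^2(S_\rho(x_0))}$ via the interior estimate (Corollary \ref{cor1} with $u\equiv 0$) or the inverse inequality, it tests against $\phi\in C_0^\infty(S_\rho(x_0))$, writes $(u_h,\phi)_\Omega=a(u_h-\hat u_h,v-v_h)$ where $\hat u_h\in\mathring V_h$ strips the boundary nodal values so that $u_h-\hat u_h$ is supported in the collar $\Lambda_h$ with $\norm{u_h-\hat u_h}_{L^\infty}\le C\norm{u_h}_{L^\infty(\partial\Omega)}$, and then proves the key estimate $\norm{v-v_h}_{V^1(\Lambda_h)}\le Ch\rho$ for the dual solution by a dyadic-annulus decomposition combined with local energy estimates, local $W^{2,p}$ regularity and the boundary Poincar\'e inequality. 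The boundary data thus enter only through a discrete cutoff whose $V^\infty$ norm is $O(Mh^{-1})$, and this loss is absorbed by the smallness of the dual error measured in the $L^1$-type norm $V^1$ over the thin set $\Lambda_h$. That mechanism --- trading an $h^{-1}$ inverse-inequality loss against the $O(h\rho)$ smallness of $\norm{v-v_h}_{V^1(\Lambda_h)}$ --- is the idea your proposal is missing.
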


To state the final $L^\infty$ error estimate, we make the following assumption on triangulations.

\begin{asm}\label{asm2}
There exist a convex polygonal domain $\widetilde \Omega \Supset \Omega$ and its triangulation $\widetilde \T_h$
such that $\T_h$ is the restriction of $\widetilde \T_h$ to $\Omega$, and that 
\eqref{eq:triangulation} holds for any $\widetilde K \in \widetilde \T_h\in \{\widetilde \T_h\}_h$ with the same constant $C_*$.
\end{asm}

We define $\widetilde \E_h$, $V^p(\widetilde \Omega)$, $V_h(\widetilde \Omega)$ similarly, using $\widetilde \T_h$.

\begin{thm}[$L^\infty$ error estimate]\label{thm3}
Letting $u \in V^\infty,u_h \in V_h$ satisfy \eqref{eq:go}, and supposing that Assumptions \ref{asm1} and \ref{asm2} are satisfied, then we have 
\begin{equation}
\norm{u-u_h}_{L^\infty(\Omega)} \le C \left(\inf_{\chi \in V_h}\norm{u-\chi}_{\alpha(h),\Omega} + \norm{u-u_h}_{L^\infty(\partial \Omega)}\right) \label{eq:thm3}
\end{equation}
for a sufficiently small $h$.
\end{thm}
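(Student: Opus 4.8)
The plan is to follow the Schatz-type strategy: split the discrete error into a \emph{discrete harmonic} part, which the weak maximum principle (Theorem \ref{thm2}) reduces to the boundary, and an \emph{interior} part supported away from $\partial\Omega$, which the interior estimate (Theorem \ref{thm1}) controls up to an $L^2$ residual.

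First I would fix an arbitrary $\chi\in V_h$ and write $u-u_h=(u-\chi)-(u_h-\chi)$. Since $u_h-\chi$ is not discrete harmonic (the orthogonality \eqref{eq:go} only holds against $\mathring V_h$ and $\chi\ne u$), I introduce $z_h\in\mathring V_h$ as the solution of the interior problem $a(z_h,\xi)=a(u-\chi,\xi)$ for all $\xi\in\mathring V_h$, which is well posed by coercivity \eqref{eq:dgcoercive}, and set $r_h\coloneqq u_h-\chi-z_h\in V_h$. A direct computation using \eqref{eq:go} shows $a(r_h,\xi)=0$ for all $\xi\in\mathring V_h$, i.e. $r_h$ is discrete harmonic in the sense of \eqref{eq:discreteharmonic}. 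The triangle inequality then gives $\norm{u-u_h}_{L^\infty(\Omega)}\le\norm{u-\chi}_{L^\infty(\Omega)}+\norm{z_h}_{L^\infty(\Omega)}+\norm{r_h}_{L^\infty(\Omega)}$, where the first term is already bounded by $\norm{u-\chi}_{\alpha(h),\Omega}$.

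The harmonic part is immediate: Theorem \ref{thm2} gives $\norm{r_h}_{L^\infty(\Omega)}\le C\norm{r_h}_{L^\infty(\partial\Omega)}$. Because $z_h\in\mathring V_h$ vanishes on every element touching $\partial\Omega$, we have $r_h=u_h-\chi$ on $\partial\Omega$, so $\norm{r_h}_{L^\infty(\partial\Omega)}\le\norm{u-u_h}_{L^\infty(\partial\Omega)}+\norm{u-\chi}_{L^\infty(\partial\Omega)}$, and the last term is absorbed into $\norm{u-\chi}_{\alpha(h),\Omega}$. For the interior part, the key observation is that the pair $(u-\chi,z_h)$ itself satisfies the orthogonality \eqref{eq:go}; since $\supp z_h$ lies at distance at least $ch$ from $\partial\Omega$ for some $c>0$ (by quasi-uniformity), I would choose $\Omega_1=\Omega$ and an interior $\Omega_0\supset\supp z_h$ with $\dist(\Omega_0,\partial\Omega)\ge\kappa h$, and apply Theorem \ref{thm1} (bounding the best-approximation infimum by the choice $\eta=0$) to obtain $\norm{z_h}_{L^\infty(\Omega)}=\norm{z_h}_{L^\infty(\Omega_0)}\le C\left(\norm{u-\chi}_{\alpha(h),\Omega}+\norm{(u-\chi)-z_h}_{L^2(\Omega)}\right)$.

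The hard part will be the remaining $L^2$ residual $\norm{(u-\chi)-z_h}_{L^2(\Omega)}$, and this is exactly where Assumption \ref{asm2} is needed. An Aubin--Nitsche duality requires $H^2$ regularity of the dual problem, which fails on a nonconvex polygon $\Omega$ (Proposition \ref{prop:poisson} only yields $W^{2,p}$ with $p<2$); I would therefore pose the dual problem on the convex super-domain $\widetilde\Omega$, extending $z_h$ by zero (legitimate since its support is interior) and using the matching triangulation $\widetilde\T_h$ so that the DG forms agree on the interior couplings. Convexity gives a dual solution $\widetilde\Phi\in H^2(\widetilde\Omega)$ with $\norm{\widetilde\Phi}_{H^2(\widetilde\Omega)}\le C\norm{(u-\chi)-z_h}_{L^2(\Omega)}$, after which DG consistency on $\widetilde\Omega$, the orthogonality $a((u-\chi)-z_h,\xi)=0$, and the $W^{2,p}$ and $W^{1,1}$ estimates of Section \ref{sect2} (together with Assumption \ref{asm1} to recover the correct $\alpha(h)$ factor, as in the proof of Theorem \ref{thm1}) should deliver $\norm{(u-\chi)-z_h}_{L^2(\Omega)}\le C\norm{u-\chi}_{\alpha(h),\Omega}$. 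Collecting the three bounds and taking the infimum over $\chi\in V_h$ yields \eqref{eq:thm3}. The delicate bookkeeping is keeping every contribution proportional to the $\alpha(h)$-weighted norm rather than the plain $V^\infty$ norm, and correctly handling the boundary-adjacent elements through the convex extension.
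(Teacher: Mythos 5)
Your overall architecture matches the paper's: split the error into a piece controlled by the interior estimate (Theorem \ref{thm1}) plus a discrete harmonic remainder controlled by the weak maximum principle (Theorem \ref{thm2}), and invoke the convex super-domain of Assumption \ref{asm2} to obtain an $H^2$-regular dual problem for the residual $L^2$ norm. The algebra of your decomposition is also sound: $r_h=u_h-\chi-z_h$ is indeed discrete harmonic, $z_h$ vanishes on boundary elements so $r_h=u_h-\chi$ on $\partial\Omega$, and the pair $\left(u-\chi,\,z_h\right)$ satisfies \eqref{eq:go}.

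The gap is in the final duality step, and it is not mere bookkeeping. Because you build $z_h$ as the \emph{interior} Galerkin projection of $u-\chi$ on $\Omega$, the orthogonality $a((u-\chi)-z_h,\xi)=0$ holds only for $\xi\in\mathring V_h(\Omega)$, and the primal function $u-\chi$ has no extension to $\widetilde\Omega$. Extending only $z_h$ by zero does not repair this: writing $w$ for the zero-extension of $(u-\chi)-z_h$, the consistency identity $\norm{w}_{L^2}^2=\tilde a(w,\widetilde\Phi)$ produces nonvanishing jump contributions $\int_e\mean{\nabla\widetilde\Phi}\jump{w}\,ds$ on edges $e\subset\partial\Omega$ (where $u-\chi$ does not vanish), and $\tilde a(w,\widetilde\Phi_h)$ does not vanish for $\widetilde\Phi_h\in\mathring V_h(\widetilde\Omega)$ whose support meets the boundary strip, so Galerkin orthogonality cannot be inserted. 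A duality confined to $\Omega$ fares no better: on a nonconvex polygon the dual solution is only $W^{2,p}$ with $p<2$, yielding a gain $h^{2-2/p}$ rather than $h$, which cannot be absorbed into the $\alpha(h)$-weighted norm for a general admissible $\alpha$; the crude stability bound $\norm{z_h}_{L^2(\Omega)}\le C\norm{u-\chi}_{V^2(\Omega)}$ likewise loses the factor $\alpha(h)$ and proves only the unweighted estimate. The paper resolves this by reversing the order of operations: it first extends the \emph{continuous} function $u$ to $\tilde u\in V^\infty(\widetilde\Omega)$ with $\tilde u=0$ on $\partial\widetilde\Omega$, defines $\tilde u_h\in\mathring V_h(\widetilde\Omega)$ by $\tilde a(\tilde u-\tilde u_h,\xi)=0$ for all $\xi\in\mathring V_h(\widetilde\Omega)$, and only then splits $u-u_h=(\tilde u-\tilde u_h)-(u_h-\tilde u_h)$ on $\Omega$. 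Full orthogonality on the convex domain is then available by construction, the duality gives $\norm{\tilde u-\tilde u_h}_{L^2(\widetilde\Omega)}\le Ch\norm{\tilde u-\tilde\chi}_{V^\infty(\widetilde\Omega)}$, Theorem \ref{thm1} applies with $\Omega_0=\Omega\Subset\Omega_1=\widetilde\Omega$ (so it also controls $\tilde u-\tilde u_h$ on $\partial\Omega$, which your harmonic part needs), and $u_h-\tilde u_h$ is the discrete harmonic remainder. Restructure your argument along these lines; the pieces you have assembled correctly will all reappear.
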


\section{Preliminaries}
\label{sect2}

In this section, we collect some preliminary results. 

\subsection{Some local estimates}
\label{sec:l2}

For $x_0 \in \R^2$ and $d>0$, we denote $B_d(x_0)$ as an open disk with center $x_0$ and radius $d$. We define $N_d(\Omega_0) \coloneqq \{ x \in \overline\Omega \colon \dist{(x,\Omega_0)} < d \}$ and $S_d(x_0) \coloneqq N_d(\{x_0\}) = \overline\Omega \cap B_d(x_0)$.
For $\Omega_0 \subset \Omega_1 \subset \Omega$, we define
\[
    d(\Omega_0,\Omega_1) \coloneqq \dist{(\Omega_0,\partial \Omega_1)},\quad  d_{\Omega}(\Omega_0,\Omega_1) \coloneqq \dist{(\Omega_0,\partial \Omega_1 \setminus \partial\Omega}).
\]
If $d_{\Omega}(\Omega_0,\Omega_1) \ge d$, then we have $N_d(\Omega_0) \subset \Omega_1$.

First, we recall further regularity results for the solution of \textup{(BVP;$f,0$)}; see \cite[Lemma 1.2, Lemma 1.3]{MR551291} for more detail.  

\begin{prop}
\label{prop:poisson2}
Under the same settings of Proposition \ref{prop:poisson}, we have the following. 
    \begin{enumerate}
    \item[\textup{(iii)}] \label{prop:poisson3} Assume that $f \in L^2(\Omega)$ and $\supp{f} \subset S_d(x_0)$ for some $d > 0$ and $x_0 \in \overline \Omega$ with $\dist(x_0,\partial \Omega) \le d$.
    Then, we have
    \begin{equation}
    \abs{u}_{H^1(\Omega)} \le C d \norm{f}_{L^2(S_d(x_0))}. \label{eq:localstability}
    \end{equation}
    \item[\textup{(iv)}] \label{prop:poisson4} Assume that $\Omega_0 \subset \Omega_1 \subset \Omega$ and $d>0$ with $d_{\Omega}(\Omega_0,\Omega_1) \ge d$.
    Then, there exists a positive constant $C$ such that 
    \begin{equation}
    \abs{u}_{W^{2,p}(\Omega_0)} \le C(\abs{f}_{L^p(\Omega_1)}+d^{-1}\abs{u}_{W^{1,p}(\Omega_1)}+d^{-2}\norm{u}_{L^p(\Omega_1)}) \label{interiorstability}
    \end{equation}
    under the same assumption of \textup{(i)} or \textup{(ii)} in Proposition \ref{prop:poisson}.
    \end{enumerate}
\end{prop}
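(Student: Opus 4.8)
\textbf{Plan for the proof of Proposition \ref{prop:poisson2}.}

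The statement collects two continuous (PDE-level) regularity facts for the solution $u$ of \textup{(BVP;$f,0$)}, namely a local $H^1$ stability bound \eqref{eq:localstability} when $f$ is supported near the boundary, and an interior $W^{2,p}$ Caccioppoli-type estimate \eqref{interiorstability}. These are classical elliptic-regularity results, and the plan is to reduce each to the global regularity already granted by Proposition \ref{prop:poisson}, localized with cut-off functions. Since the excerpt attributes both items to \cite{MR551291}, I would follow that line of argument rather than reinventing it; what follows is how I would reconstruct the proofs.

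For part \textup{(iii)}, the idea is a testing argument. Taking $v=u$ in the weak formulation \eqref{eq:weakhomodiriclet} (with $g=0$, $\tilde g=0$) gives $\abs{u}_{H^1(\Omega)}^2=(f,u)_\Omega$, and since $\supp f\subset S_d(x_0)$ this equals $(f,u)_{S_d(x_0)}$, bounded by $\norm{f}_{L^2(S_d(x_0))}\norm{u}_{L^2(S_d(x_0))}$. The nontrivial point is to convert the $L^2$ norm of $u$ on the small set $S_d(x_0)$ into $d\,\abs{u}_{H^1(\Omega)}$, thereby gaining the factor $d$. For this I would invoke a Poincar\'e-type inequality adapted to the geometry: because $u=0$ on $\partial\Omega$ and $x_0$ is within distance $d$ of $\partial\Omega$, every point of $S_d(x_0)$ lies within a bounded multiple of $d$ from a boundary point where $u$ vanishes, so integrating $\abs{u}$ along a path from the boundary yields $\norm{u}_{L^2(S_d(x_0))}\le C d\,\abs{u}_{H^1(\Omega)}$. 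Combining the two bounds and cancelling one power of $\abs{u}_{H^1(\Omega)}$ delivers \eqref{eq:localstability}. The delicate step here is the Poincar\'e inequality with the explicit linear dependence on $d$ and a constant independent of $x_0$; this is exactly where the hypothesis $\dist(x_0,\partial\Omega)\le d$ is used, and I would make it precise using the Lipschitz character of $\partial\Omega$.

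For part \textup{(iv)}, the plan is a standard interior difference-quotient / cut-off argument yielding a Caccioppoli estimate for second derivatives. Fix a smooth cut-off $\eta$ with $\eta\equiv 1$ on $\Omega_0$, $\supp\eta\subset N_{d}(\Omega_0)\subset\Omega_1$ (using $d_\Omega(\Omega_0,\Omega_1)\ge d$ so that $N_d(\Omega_0)\subset\Omega_1$), and $\abs{\nabla^k\eta}\le C d^{-k}$. Since $N_d(\Omega_0)$ stays away from $\partial\Omega$ except possibly along $\partial\Omega$ itself, the localized function is governed by the interior regularity theory; on the interior portions one appeals to the $W^{2,p}$ estimate for $-\Delta u=f$ applied to $\eta u$, which satisfies $-\Delta(\eta u)=\eta f-2\nabla\eta\cdot\nabla u-(\Delta\eta)u$. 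Applying the global bound \eqref{eq:nonconvexregulality} (or \eqref{eq:convexregulality}) to $\eta u$ and estimating the right-hand side term by term produces
\[
\abs{u}_{W^{2,p}(\Omega_0)}\le\abs{\eta u}_{W^{2,p}(\Omega)}\le C\bigl(\norm{\eta f}_{L^p}+\norm{\nabla\eta\cdot\nabla u}_{L^p}+\norm{(\Delta\eta)u}_{L^p}\bigr),
\]
and inserting the bounds $\abs{\nabla\eta}\le Cd^{-1}$, $\abs{\Delta\eta}\le Cd^{-2}$ gives exactly the three terms on the right of \eqref{interiorstability}. The main obstacle I anticipate is the boundary interface: because $N_d(\Omega_0)$ may touch $\partial\Omega$, I cannot simply use purely interior regularity and must rely on the \emph{global} $W^{2,p}$ bounds of Proposition \ref{prop:poisson} (valid up to the boundary for the homogeneous Dirichlet problem), checking that $\eta u$ still vanishes on $\partial\Omega$ and therefore remains an admissible datum for those global estimates. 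Once this compatibility is verified, the term-by-term estimation is routine.
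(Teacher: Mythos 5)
Your proposal is correct and is essentially the intended argument: the paper itself gives no proof of Proposition \ref{prop:poisson2}, deferring to Schatz \cite{MR551291} (Lemmas 1.2 and 1.3), whose proofs are exactly your two steps --- testing the weak form with $u$ plus a $d$-scaled Poincar\'e inequality for (iii), and a cut-off function $\eta$ with $\abs{\nabla^k\eta}\le Cd^{-k}$ fed into the global (up-to-the-boundary) $W^{2,p}$ bounds of Proposition \ref{prop:poisson} for (iv), the key observation being, as you note, that $\eta u$ still lies in $H^1_0(\Omega)$ so interior regularity is never needed. The one refinement: the Poincar\'e step you sketch by path integration need not be re-derived, since it is Proposition \ref{prop:poincare} of the paper (stated for $x_0\in\partial\Omega$); because $\dist(x_0,\partial\Omega)\le d$, pick $\bar x\in\partial\Omega$ with $\abs{x_0-\bar x}\le d$ and apply it on $S_{2d}(\bar x)\supset S_d(x_0)$.
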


A version of the Poincar\'e inequality is available (see \cite[Lemma 1.1]{MR551291}).

\begin{prop}\label{prop:poincare}
    Let $\Omega$ be a simply connected polygonal domain. Then, there exists a positive constant $C$ satisfying
    \begin{equation}
    \norm{v}_{L^2(S_d(x_0))} \le C d\abs{v}_{H^1(S_d(x_0))}  \quad {}^\forall v \in H^1_0(\Omega)\label{eq:poincare} 
    \end{equation}
    for all $x_0 \in \partial \Omega$ and $d>0$.
\end{prop}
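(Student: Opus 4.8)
The plan is to reduce \eqref{eq:poincare} to a Poincar\'e--Wirtinger inequality on the disk $B_d(x_0)$ by extending $v$ by zero and exploiting that the extension vanishes on a fixed fraction of the disk. First I would extend $v$ by zero outside $\Omega$. Since $\Omega$ is a (Lipschitz) polygonal domain and $v\in H^1_0(\Omega)$, the zero extension $\tilde v$ belongs to $H^1(\R^2)$, with $\nabla\tilde v=0$ a.e.\ on $\R^2\setminus\overline\Omega$. Writing $B=B_d(x_0)$, the extension then satisfies $\tilde v=0$ a.e.\ on $E\coloneqq B\setminus\overline\Omega$, while on $B\cap\Omega=S_d(x_0)$ we have $\tilde v=v$ and $\nabla\tilde v=\nabla v$. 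Consequently
\[
\norm{v}_{L^2(S_d(x_0))}=\norm{\tilde v}_{L^2(B)},\qquad \abs{v}_{H^1(S_d(x_0))}=\abs{\tilde v}_{H^1(B)},
\]
so it suffices to prove $\norm{\tilde v}_{L^2(B)}\le Cd\,\abs{\tilde v}_{H^1(B)}$.

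The key geometric step is a uniform lower bound on the measure of the zero set,
\[
\abs{E}_2=\abs{B\setminus\overline\Omega}_2\ge c_0\,\abs{B}_2\qquad(x_0\in\partial\Omega,\ d>0),
\]
with $c_0>0$ depending only on $\Omega$. I would verify this by separating the regimes of $d$. For small $d$, near $x_0$ the boundary $\partial\Omega$ is either a straight edge or a single wedge, so $B\cap\Omega$ occupies exactly the fraction $\theta_{x_0}/(2\pi)$ of $B$, where $\theta_{x_0}\le\max(\pi,\alpha)<2\pi$ is the interior angle of $\Omega$ at $x_0$; hence the exterior fraction is at least $(2\pi-\max(\pi,\alpha))/(2\pi)>0$. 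For large $d$, namely $d>\diam(\overline\Omega)$, the ball contains $\Omega$, so $\abs{B\cap\Omega}_2=\abs{\Omega}_2$ is bounded while $\abs{B}_2\to\infty$, giving the bound trivially. The intermediate range $d\in[d_1,d_2]$ is handled by continuity of $(x_0,d)\mapsto \abs{B_d(x_0)\cap\Omega}_2/\abs{B_d(x_0)}_2$ together with compactness of $\partial\Omega\times[d_1,d_2]$, using that this ratio is strictly less than $1$ for every boundary point (every disk centered at $x_0\in\partial\Omega$ meets $\R^2\setminus\overline\Omega$ in positive measure). Finitely many vertices and edges make the small-$d$ estimate uniform, so a single $c_0>0$ works.

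Finally I would combine the zero set with the scale-correct Poincar\'e--Wirtinger inequality on $B$. Denoting by $\overline{\tilde v}=\abs{B}_2^{-1}\int_B\tilde v\,dx$ the mean, one has $\norm{\tilde v-\overline{\tilde v}}_{L^2(B)}\le C_P d\,\abs{\tilde v}_{H^1(B)}$ with $C_P$ depending only on the dimension. Since $\tilde v=0$ on $E$, the Cauchy--Schwarz inequality gives $\abs{E}_2\,\abs{\overline{\tilde v}}=\bigl|\int_E(\overline{\tilde v}-\tilde v)\,dx\bigr|\le \abs{E}_2^{1/2}\norm{\tilde v-\overline{\tilde v}}_{L^2(B)}$, hence
\[
\norm{\overline{\tilde v}}_{L^2(B)}=\abs{B}_2^{1/2}\abs{\overline{\tilde v}}\le\bigl(\abs{B}_2/\abs{E}_2\bigr)^{1/2}\norm{\tilde v-\overline{\tilde v}}_{L^2(B)}\le c_0^{-1/2}\norm{\tilde v-\overline{\tilde v}}_{L^2(B)}.
\]
A triangle inequality then yields $\norm{\tilde v}_{L^2(B)}\le(1+c_0^{-1/2})C_P d\,\abs{\tilde v}_{H^1(B)}$, which is the desired bound with $C=(1+c_0^{-1/2})C_P$. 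I expect the only real obstacle to be the uniform measure estimate on $E$; once $c_0$ is fixed, the remaining steps are standard. The hypothesis that $\Omega$ is polygonal with maximum angle $\alpha<2\pi$ is precisely what rules out outward cusps and keeps $c_0$ bounded away from zero.
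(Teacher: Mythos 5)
Your proof is correct in substance, but it takes a genuinely different route from the paper's. The paper in fact does not prove Proposition~\ref{prop:poincare} at all: it quotes it from Schatz's Lemma~1.1 in \cite{MR551291}, where the inequality is obtained by a polar-coordinate argument. There one extends $v$ by zero, observes that by \emph{simple connectivity} no circle $\partial B_r(x_0)$, $0<r<d$, centered at a boundary point can lie entirely inside $\Omega$ (otherwise the full disk $B_r(x_0)\ni x_0$ would lie in $\Omega$), so the extended function vanishes somewhere on almost every such circle; the one-dimensional Poincar\'e inequality on each circle followed by integration in $r$ then gives \eqref{eq:poincare} with an absolute constant. That argument uses no boundary regularity whatsoever and is precisely where the hypothesis ``simply connected'' enters. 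Your argument never uses simple connectivity: it trades it for the polygonal structure, via the exterior measure-density bound $\abs{B_d(x_0)\setminus\overline\Omega}_2\ge c_0\abs{B_d(x_0)}_2$, and concludes with Poincar\'e--Wirtinger plus the mean-value estimate over the zero set. This buys you more in one direction (the inequality then holds for multiply connected polygons as well) and less in another (it would fail for simply connected domains whose complement is thin near the boundary, e.g.\ inward cusps, which Schatz's argument tolerates).

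One step of yours needs tightening: the claim that for small $d$ the fraction $\abs{B_d(x_0)\cap\Omega}_2/\abs{B_d(x_0)}_2$ equals exactly $\theta_{x_0}/(2\pi)$ is false in the transition regime where $x_0$ lies on an edge at distance comparable to $d$ from a vertex $P$; there the ball sees both the edge and the corner, the fraction depends on the angle at $P$ and on the ratio $\abs{x_0-P}/d$, and near a reentrant vertex it can exceed $1/2$. The uniform bound $c_0>0$ still holds, but it should be justified by, e.g., the uniform exterior cone condition satisfied by polygonal boundaries (fit a sector of fixed opening and radius with vertex $x_0$ inside the exterior wedge at $P$), rather than by the exact-fraction claim. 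With that repair, your compactness argument for intermediate $d$ and the trivial large-$d$ estimate complete the measure bound, and the rest of the proof is sound.
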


\begin{prop}\label{prop:holder}
For a domain $\Omega_0 \subset \Omega$ and $1 \le p < q \le \infty$, we have $V^q(\Omega_0) \subset V^p(\Omega_0)$.
In particular, there exists a positive constant $C$ independent of $h$ and $\Omega_0$ satisfying
\begin{equation}
\norm{v}_{V^p(\Omega_0)} \le C \abs{\Omega_0}_2^{\frac{1}{p}-\frac{1}{q}} \norm{v}_{V^q(\Omega_0)} \label{eq:holder}
\end{equation}
for a sufficiently small $h$.
\end{prop}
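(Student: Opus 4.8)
The plan is to split $\norm{v}_{V^p(\Omega_0)}^p=A_p+J_p+F_p$ into its three parts, where $A_p\coloneqq\sum_{K\in\T_h}\norm{v}_{W^{1,p}(K\cap\Omega_0)}^p$, $J_p\coloneqq\sum_{e\in\E_h}h_e^{1-p}\norm{\jump{v}}_{L^p(e\cap\overline{\Omega_0})}^p$ and $F_p\coloneqq\sum_{e\in\E_h}h_e\norm{\mean{\nabla v}}_{L^p(e\cap\overline{\Omega_0})}^p$, and to read each part as the $p$-th power of an $L^p$-norm against a finite measure. For such a measure space $(S,\mu)$ the scalar H\"older inequality gives $\norm{w}_{L^p(S,\mu)}\le\mu(S)^{1/p-1/q}\norm{w}_{L^q(S,\mu)}$, so the desired factor $\abs{\Omega_0}_2^{1/p-1/q}$ appears provided every measure involved has total mass bounded by $C\abs{\Omega_0}_2$. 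I will treat $q<\infty$; the case $q=\infty$ is identical with $1/q=0$ and suprema in place of sums.

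For the volume part, $A_p=\sum_{i}\norm{D^iv}_{L^p(\Omega_0)}^p$ with $D^i\in\{\mathrm{id},\partial_1,\partial_2\}$, i.e.\ it is built from $L^p$-norms over $\Omega_0$ with ordinary Lebesgue measure $dx$, whose total mass is exactly $\sum_{K}\abs{K\cap\Omega_0}_2=\abs{\Omega_0}_2$. Applying the scalar H\"older inequality to each component and the equivalence of the $\ell^p$ and $\ell^q$ norms on $\R^3$ to recombine the three components yields $A_p\le C\,\abs{\Omega_0}_2^{1-p/q}A_q^{p/q}$ with no geometric loss.

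For the two edge parts I would first rewrite $J_p$ through the scaled jump $h_e^{-1}\jump{v}$, so that both edge parts become $L^p$-norms against the common measure $d\mu=h_e\,ds$ on the skeleton $\Sigma_0\coloneqq\bigcup_{e\in\E_h}(e\cap\overline{\Omega_0})$: indeed $h_e^{1-p}\norm{\jump{v}}_{L^p(e\cap\overline{\Omega_0})}^p=\int_{e\cap\overline{\Omega_0}}\abs{h_e^{-1}\jump{v}}^p\,d\mu$, while $h_e\norm{\mean{\nabla v}}_{L^p(e\cap\overline{\Omega_0})}^p=\int_{e\cap\overline{\Omega_0}}\abs{\mean{\nabla v}}^p\,d\mu$. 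The scalar H\"older inequality on $(\Sigma_0,\mu)$ then gives $J_p\le M^{1-p/q}J_q^{p/q}$ and $F_p\le M^{1-p/q}F_q^{p/q}$, where $M\coloneqq\mu(\Sigma_0)=\sum_{e\in\E_h}h_e\,\abs{e\cap\overline{\Omega_0}}_1$. Setting $L\coloneqq\max(\abs{\Omega_0}_2,M)$, adding the three estimates, and using the concavity of $t\mapsto t^{p/q}$ to pass from $A_q^{p/q}+J_q^{p/q}+F_q^{p/q}$ to $\norm{v}_{V^q(\Omega_0)}^p$, I obtain $\norm{v}_{V^p(\Omega_0)}\le C\,L^{1/p-1/q}\norm{v}_{V^q(\Omega_0)}$; the inclusion $V^q(\Omega_0)\subset V^p(\Omega_0)$ then follows from the finiteness of the right-hand side.

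Everything thus reduces to the geometric mass bound $M\le C\abs{\Omega_0}_2$, which I regard as the main obstacle. Shape-regularity and quasi-uniformity \eqref{eq:triangulation} give $\abs{e\cap\overline{\Omega_0}}_1\le\abs{e}_1\le Ch_e$ and $h_e^2\le C\abs{K}_2$ for each element $K$ adjacent to $e$, so that $h_e\abs{e\cap\overline{\Omega_0}}_1\le C\abs{T_e}_2$, where $T_e$ is the union of the at most two triangles sharing $e$. Since each triangle is shared by three edges, the elements appear with multiplicity at most three, whence $M\le C\,\big|\bigcup_e T_e\big|_2\le C\,\abs{N_{ch}(\Omega_0)}_2$ for some $c=c(C_*)$. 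The delicate remaining point is to absorb this $O(h)$-tubular neighborhood back into $\abs{\Omega_0}_2$: this is exactly where the hypothesis ``for a sufficiently small $h$'' is needed, and it forces the constant to be uniform only over domains $\Omega_0$ fat enough that $\abs{N_{ch}(\Omega_0)}_2\le C\abs{\Omega_0}_2$ — which covers the sets $\Omega_0\subset\Omega_1$ with $\dist(\Omega_0,\partial\Omega_1)\ge\kappa h$ arising in Theorem \ref{thm1}.
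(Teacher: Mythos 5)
Your proof follows essentially the same route as the paper's: H\"older's inequality on the volume part with Lebesgue measure and on the skeleton with the measure $h_e\,ds$, reducing everything to the geometric bound $\sum_{e}h_e\abs{e\cap\overline{\Omega_0}}_1\le C\abs{\Omega_0}_2$. The ``delicate point'' you flag at the end is real, but the paper simply asserts it (as $\sum_{K\in\T_h(\Omega_0)}\abs{K}_2\le C\abs{\Omega_0}_2$, with $\T_h(\Omega_0)$ the elements touching $\overline{\Omega_0}$), so your treatment is, if anything, more explicit about the implicit restriction on $\Omega_0$ under which the constant is uniform.
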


\begin{proof}
Using H\"older's inequality, we have
\begin{align*}
    \norm{v}_{V^p(\Omega_0)}^p &\le C \abs{\Omega_0}_2^{\frac{q-p}{q}}\sum_{K \in \T_h} \norm{v}_{W^{1,q}(K \cap \Omega_0)}^p
    \\ &+ \left(\sum_{e \in \E_h} h_e \abs{e\cap\overline\Omega_0}_1\right)^{\frac{q-p}{q}}\left(\sum_{e \in \E_h} h_e^{1-q}\norm{\jump{v}}_{L^q(e \cap \overline \Omega_0 )}^p
 + \sum_{e \in \E_h} h_e \norm{\mean{\nabla v}}_{L^q(e \cap \overline \Omega_0)}^p\right).
\end{align*}
Therefore, 
\begin{align*}
    \sum_{e \in \E_h} h_e \abs{e\cap\overline\Omega_0}_1 \le \sum_{K \in \T_h(\Omega_0)}\abs{K}_2 \le C\abs{\Omega_0}_2, 
\end{align*}
where $\T_h(\Omega_0) \coloneqq \{K \in \T_h \colon \overline K \cap \overline \Omega_0 \ne \emptyset\}$. Consequently, \eqref{eq:holder} follows. 
\end{proof}
    For $\mathcal{O}\subset\Omega$, we denote broken Sobolev space $W^{j,p}_h(\mathcal{O})$ as 
    \[
        W^{j,p}_h(\mathcal{O})=W^{j,p}_h(\mathcal{O},\T_h) \coloneqq\{v \in L^p(\mathcal{O}) \colon v|_{K\cap \mathcal{O}} \in W^{j,p}(K\cap \mathcal{O})\}
    \]
    equipped with the norm
    \[
        \norm{v}_{W^{j,p}_h(\mathcal{O})} = \left(\sum_{K \in \T_h} \norm{v}_{W^{j,p}( K \cap \mathcal{O})}^p\right)^{{1}/{p}}.
    \]
The following results are available; see \cite[Chapter 3]{MR0520174}, \cite[Propositions 2.1 and 2.2]{MR2113680} and \cite[Proposition 2.2]{MR0431753}.

\begin{prop}\label{prop:bestapprx}
    Let $1 \le p \le \infty$, and $0 \le i \le 1 \le j \le 1 + r$.
    Assume that $\kappa>0$ and open sets $\Omega_0 \subset \Omega_1 \subset \Omega$ satisfy $d_{\Omega}(\Omega_0,\Omega_1)\ge\kappa h$.
    Then, there exists positive constant $C$ independent of $h$ such that for $v \in W^{j,p}_h(\Omega_1)$, $\chi \in V_h(\Omega_1)$ exists and satisfies
    \begin{equation}
    \norm{v-\chi}_{W^{i,p}_h(\Omega_0)} \le C h^{j-i}\norm{v}_{W^{j,p}_h(\Omega_1)}. \label{eq:bestapprx}
    \end{equation}
\end{prop}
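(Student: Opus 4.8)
The plan is to build the approximant $\chi$ purely element by element, which is legitimate precisely because $V_h$ consists of \emph{discontinuous} piecewise polynomials with no inter-element continuity to preserve; the estimate \eqref{eq:bestapprx} then reduces to the classical single-element polynomial approximation bound, followed by a summation over the elements meeting $\Omega_0$. The first step is to extract the geometric content of the hypothesis $d_\Omega(\Omega_0,\Omega_1)\ge\kappa h$. As recorded in Section~\ref{sec:l2}, this gives $N_{\kappa h}(\Omega_0)\subset\Omega_1$; since every $K\in\T_h(\Omega_0)$ satisfies $\diam K = h_K\le h$, each such element lies in $N_{h}(\Omega_0)$, so that (for $\kappa$ at least a fixed constant) $K\subset\Omega_1$ and, in particular, $v$ is available on all of $K$ with $\norm{v}_{W^{j,p}(K\cap\Omega_1)}=\norm{v}_{W^{j,p}(K)}$.

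Next I would fix, on each $K$, a local polynomial approximation operator $\Pi_K\colon W^{j,p}(K)\to\mathcal{P}^r(K)$ --- for instance the averaged Taylor (Bramble--Hilbert) projector, which reproduces $\mathcal{P}^r(K)$ --- and set $\chi|_K\coloneqq\Pi_K v$ for $K\in\T_h(\Omega_0)$, while defining $\chi$ arbitrarily on the remaining elements so that $\chi\in V_h(\Omega_1)$. The classical scaling argument on the shape-regular element $K$ (see \cite{MR0520174,MR2113680,MR0431753}) yields, for $0\le i\le j\le 1+r$,
\[
\norm{v-\chi}_{W^{i,p}(K)}\le C h_K^{j-i}\abs{v}_{W^{j,p}(K)},
\]
with $C$ depending only on $r$, $p$ and the shape-regularity constant $C_*$ of \eqref{eq:triangulation}, hence independent of $h$ and of $K$.

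Summing these local bounds over $K\in\T_h(\Omega_0)$ then completes the argument. For $1\le p<\infty$, using $h_K\le h$ and $K\subset\Omega_1$,
\[
\norm{v-\chi}_{W^{i,p}_h(\Omega_0)}^p=\sum_{K\in\T_h(\Omega_0)}\norm{v-\chi}_{W^{i,p}(K\cap\Omega_0)}^p\le C h^{p(j-i)}\sum_{K\in\T_h(\Omega_0)}\abs{v}_{W^{j,p}(K)}^p\le C h^{p(j-i)}\norm{v}_{W^{j,p}_h(\Omega_1)}^p,
\]
and taking $p$-th roots gives \eqref{eq:bestapprx}; the case $p=\infty$ is identical with maxima in place of sums.

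The proof is essentially routine, and two points deserve care. First, for \emph{non-integer} $i$ or $j$ the single-element estimate must be read with Sobolev--Slobodeckij seminorms; the scaling argument still produces the factor $h_K^{j-i}$, and because $W^{j,p}_h$ is a \emph{broken} space defined piecewise on each $K\cap\mathcal{O}$, the nonlocal fractional seminorm remains summable over elements exactly as above. Second --- and this is the genuine obstacle --- the reduction to a full-element estimate presumes that the elements meeting $\Omega_0$ lie inside $\Omega_1$; this holds once $\kappa$ exceeds a fixed constant (depending only on shape regularity), but for small $\kappa$ an element may be cut by $\partial\Omega_1$, and one must instead approximate $v|_{K\cap\Omega_1}$, either by invoking a stable Sobolev extension of $v$ across $\partial\Omega_1$ to restore a full-element domain, or by working on $K\cap\Omega_1$ directly. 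I would adopt the former, since it keeps $C$ uniform and reduces the cut case to the interior estimate already established.
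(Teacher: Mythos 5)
Your main construction --- an averaged-Taylor (Bramble--Hilbert) projector $\Pi_K$ on each triangle, the scaled single-element estimate $\norm{v-\Pi_K v}_{W^{i,p}(K)}\le C h_K^{j-i}\abs{v}_{W^{j,p}(K)}$, and summation of $p$-th powers over the elements meeting $\Omega_0$ --- is exactly the standard argument. The paper offers no proof of its own for this proposition; it only points to \cite{MR0520174}, \cite{MR2113680} and \cite{MR0431753}, and what is done there is precisely this elementwise scaling argument, legitimate for $V_h$ because no inter-element continuity must be preserved. In the regime where every element touching $\Omega_0$ is contained in $\Omega_1$ (which your proviso ``$\kappa$ at least a fixed constant'', say $\kappa\ge 2$, guarantees via $N_{\kappa h}(\Omega_0)\subset\Omega_1$), your proof is complete and correct.

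The genuine gap is the patch you propose for small $\kappa$. A ``stable Sobolev extension of $v$ across $\partial\Omega_1$'' is not available: $\Omega_0,\Omega_1$ are arbitrary open sets, so the cut region $K\cap\Omega_1$ need not be Lipschitz, or even connected, and no bounded extension operator $W^{j,p}(K\cap\Omega_1)\to W^{j,p}(K)$ exists in general, let alone one whose norm depends only on the shape-regularity constant $C_*$. Moreover, no repair of this step is possible, because the estimate itself fails for small $\kappa$ in this generality. Take $r=1$, $i=0$, $j=2$, $p=\infty$, an interior element $K$ with $h_K=h$, and collinear points $c_1,c_2,c_3\in K$ with $c_2$ the midpoint of $c_1,c_3$ and spacing $h/4$; put $\Omega_0=\bigcup_{m}B_{\epsilon}(c_m)$ with $\epsilon=h^4$ and $\Omega_1=\bigcup_m B_{\kappa h+\epsilon}(c_m)$, a disjoint union when $\kappa<1/16$, so that $d_{\Omega}(\Omega_0,\Omega_1)=\kappa h$. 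The function $v$ equal to $0$ on the two outer balls of $\Omega_1$ and to $1$ on the middle one satisfies $\norm{v}_{W^{2,\infty}_h(\Omega_1)}=1$, yet for any $\chi\in V_h(\Omega_1)$ the affine function $\ell=\chi|_K$ obeys $\ell(c_2)=\tfrac{1}{2}\left(\ell(c_1)+\ell(c_3)\right)$, which forces $\norm{v-\chi}_{L^\infty(K\cap\Omega_0)}\ge \tfrac{1}{2}$, incompatible with the claimed bound $Ch^{2}$ once $h$ is small. So the proposition must be read, as it is formulated in the cited references and as it is used everywhere in the paper (the sets there are $\Omega$ itself, disks, or annuli $A_j^l$ of width comparable to $d_j\ge c\rho\ge ch$), with $\kappa$ large enough that all elements meeting $\Omega_0$ lie inside $\Omega_1$; your ``interior'' case is then the entire content of the proof, and the extension manoeuvre should be dropped rather than repaired.
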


\begin{prop}\label{prop:inverseineq}
    Let $1 \le p \le \infty$,
    Assume that $\kappa>0$ and open sets $\Omega_0 \subset \Omega_1 \subset \Omega$ satisfy $d_{\Omega}(\Omega_0,\Omega_1)\ge\kappa h$.
    Then, there exists a positive constant $C$ independent of $h$ satisfying
    \begin{equation}
    \norm{v_h}_{V^p(\Omega_0)} \le C h^{-1} \norm{v_h}_{L^p(\Omega_1)} \label{eq:inverseineq}
    \end{equation}
    for $v_h \in V_h(\Omega_1)$.
\end{prop}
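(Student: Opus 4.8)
The plan is to estimate each of the three contributions to $\norm{v_h}_{V^p(\Omega_0)}$ separately and to show that each is bounded by $Ch^{-1}\norm{v_h}_{L^p(\Omega_1)}$. The two ingredients I would use are the standard polynomial inverse inequality $\norm{\nabla v_h}_{L^p(K)} \le C h_K^{-1}\norm{v_h}_{L^p(K)}$ on each $K \in \T_h$, and the scaled discrete trace inequality $\norm{v_h}_{L^p(e)} \le C h_K^{-1/p}\norm{v_h}_{L^p(K)}$ for every edge $e \subset \partial K$; both are valid because $v_h|_K \in \mathcal{P}^r(K)$ and follow by mapping to a reference element. Quasi-uniformity \eqref{eq:triangulation} then lets me replace every $h_K$ and $h_e$ by $h$ up to the constant $C_*$.

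For the volume term, on each $K$ meeting $\overline{\Omega_0}$ I would bound $\norm{v_h}_{W^{1,p}(K\cap\Omega_0)} \le \norm{v_h}_{L^p(K)} + \norm{\nabla v_h}_{L^p(K)} \le Ch^{-1}\norm{v_h}_{L^p(K)}$, the gradient term being dominant. For the jump term I would express $\jump{v_h}$ on $e$ through the one-sided traces of $v_h$ from the (at most two) elements $K_1,K_2$ adjacent to $e$; the trace inequality gives $\norm{\jump{v_h}}_{L^p(e)} \le Ch^{-1/p}\norm{v_h}_{L^p(K_1\cup K_2)}$, so that the weight $h_e^{1-p}$ produces the factor $h^{1-p}\cdot h^{-1}=h^{-p}$. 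For the mean-gradient term I would first apply the inverse inequality and then the trace inequality, obtaining $\norm{\mean{\nabla v_h}}_{L^p(e)} \le Ch^{-1/p-1}\norm{v_h}_{L^p(K_1\cup K_2)}$, so that the weight $h_e$ again yields $h\cdot h^{-1-p}=h^{-p}$. Thus all three contributions are of order $h^{-p}$ before taking the $p$-th root.

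The one point requiring care is the geometric bookkeeping in the summations, which I expect to be the only genuinely nonroutine step. Only elements $K$ with $\overline K\cap\overline{\Omega_0}\ne\emptyset$, together with their neighbours across shared edges, actually contribute to $\norm{v_h}_{V^p(\Omega_0)}$; since $\diam K = h_K \le h$, every such element lies in a fixed neighbourhood of $\Omega_0$, and the hypothesis $\dist(\Omega_0,\partial\Omega_1\setminus\partial\Omega)\ge \kappa h$ guarantees, under the proposition's standing smallness assumption on $h$, that this neighbourhood is contained in $\Omega_1$, i.e.\ that the union of the relevant elements lies in $\Omega_1$. Shape-regularity bounds the number of edges per element and the number of elements sharing an edge, so each $\norm{v_h}_{L^p(K)}^p$ is counted only a bounded number of times; summing the three estimates over $K\in\T_h$ then gives $\norm{v_h}_{V^p(\Omega_0)}^p \le Ch^{-p}\norm{v_h}_{L^p(\Omega_1)}^p$, which is \eqref{eq:inverseineq} after taking roots. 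The case $p=\infty$ is identical, with sums replaced by maxima and the trace inequality reading $\norm{v_h}_{L^\infty(e)}\le\norm{v_h}_{L^\infty(K)}$.
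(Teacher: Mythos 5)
The paper does not actually prove this proposition; it is quoted from the literature (see the sentence citing \cite[Chapter 3]{MR0520174}, \cite[Propositions 2.1 and 2.2]{MR2113680} and \cite[Proposition 2.2]{MR0431753}), so your proposal can only be compared with the standard argument, which it indeed follows: element-wise inverse and discrete trace inequalities, quasi-uniformity to replace $h_K,h_e$ by $h$, and the check that each of the three terms in $\norm{v_h}_{V^p(\Omega_0)}^p$ carries the weight $h^{-p}$. That computational core is correct, including the exponent bookkeeping $h_e^{1-p}\cdot h^{-1}=h^{-p}$ and $h_e\cdot h^{-1-p}=h^{-p}$ and the bounded-overlap count.

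The genuine gap is exactly in the step you flagged as the nonroutine one, and your resolution of it is wrong. You claim that $d_\Omega(\Omega_0,\Omega_1)\ge\kappa h$ together with ``smallness of $h$'' ensures that every element meeting $\overline{\Omega_0}$, and its neighbours, lies inside $\Omega_1$. This is a scale-invariant statement: the relevant elements fill a neighbourhood of $\Omega_0$ of width up to $2h$, while the available buffer is only $\kappa h$, and both shrink proportionally as $h\to 0$, so smallness of $h$ buys nothing. The containment holds only if $\kappa$ is at least about $2$, whereas the proposition allows arbitrary $\kappa>0$; and the small-$\kappa$ case is not vacuous, since the paper itself invokes the proposition with $\Omega_0=\{x_0\}$, $\Omega_1=S_h(x_0)$, i.e.\ $\kappa=1$, in the proof of Theorem \ref{thm2}. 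To close the gap one must work on the partial elements $K\cap\Omega_1$: if $x\in K\cap\overline{\Omega_0}$, then $K\cap\Omega_1\supset K\cap B_{\kappa h}(x)\cap\Omega$, whose measure is bounded below by $c(\kappa,C_*)h^2$ by shape regularity, and then one uses a norm-equivalence (Remez-type) inequality for polynomials,
\begin{equation*}
\norm{q}_{L^p(K)}\le C(\delta,r)\,\norm{q}_{L^p(S)}\qquad \text{for } q\in\mathcal{P}^r(K),\ S\subset K,\ \abs{S}_2\ge\delta\abs{K}_2,
\end{equation*}
to bound the full-element norms appearing in your inverse and trace inequalities by $\norm{v_h}_{L^p(K\cap\Omega_1)}$. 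With this ingredient your argument goes through, with $C$ depending on $\kappa$ (as it must); without it, the proof as written only establishes the proposition for $\kappa$ sufficiently large.
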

    
 \begin{prop}\label{prop:superapprx}
    Let open sets $\Omega_1 \Subset \Omega_2 \Subset \Omega_3 \Subset\Omega_4 \Subset \Omega$.
    Then, there exists a positive constant $C$ independent of $h$, and the following property holds for a sufficiently small $h$.
    For each $\eta_h \in V_h(\Omega_4)$, there exists $\chi \in \mathring V_h(\Omega_3)$ satisfying $\chi \equiv \eta_h$ on $\Omega_2$ and
    \begin{equation}
    \norm{\eta_h-\chi}_{V^2(\Omega_3)} \le C \norm{\eta_h}_{V^2(\Omega_4\setminus \Omega_1)}.
    \end{equation}
\end{prop}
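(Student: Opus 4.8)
The plan is to use a smooth cutoff multiplied by $\eta_h$ and then project back into $V_h$ elementwise. Fix intermediate open sets $\Omega_2 \Subset \Omega_2' \Subset \Omega_3' \Subset \Omega_3$ and choose $\omega \in C_0^\infty(\Omega_3)$ with $0 \le \omega \le 1$, $\omega \equiv 1$ on $\Omega_2'$ and $\supp\omega \subset \Omega_3'$. Because the separations $d(\Omega_2,\partial\Omega_2')$ and $d(\Omega_3',\partial\Omega_3)$ are positive and independent of $h$, we may arrange $\norm{\omega}_{W^{1,\infty}(\Omega_3)} \le C$ with $C$ independent of $h$. Let $P_h \colon L^2 \to V_h$ denote the elementwise $L^2$-projection, so that $(P_h v)|_K = P_K(v|_K)$ is the $L^2(K)$-orthogonal projection onto $\mathcal{P}^r(K)$, and set $\chi \coloneqq P_h(\omega\eta_h) \in V_h$.

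First I would verify the two structural requirements. For $h$ small, every $K \in \T_h$ meeting $\Omega_2$ satisfies $K \subset \Omega_2'$ (since $h_K \le C_* h$ is smaller than the fixed gap), whence $\omega\equiv 1$ on $K$ and $\chi|_K = P_K(\eta_h) = \eta_h|_K$; this gives $\chi\equiv\eta_h$ on $\Omega_2$. Likewise every $K$ with $\overline K\not\subset\Omega_3$ is disjoint from $\supp\omega$, so $\chi|_K = 0$; hence $\supp\chi\subset\Omega_3$ and $\chi\in\mathring V_h(\Omega_3)$. Moreover, using $P_K\eta_h=\eta_h$ I can write the error as $\psi \coloneqq \eta_h-\chi = P_h\big((1-\omega)\eta_h\big)$, which vanishes on every element meeting $\Omega_2$; consequently, inside $\Omega_3$, $\psi$ is supported on elements lying in $\Omega_3\setminus\Omega_2 \subset \Omega_4\setminus\Omega_1$.

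Next I would estimate the three constituents of $\norm{\psi}_{V^2(\Omega_3)}$ elementwise, writing $G\coloneqq(1-\omega)\eta_h$ and using $\nabla G = -(\nabla\omega)\eta_h + (1-\omega)\nabla\eta_h$ together with $\norm{\omega}_{W^{1,\infty}}\le C$ to get $\abs{G}_{H^1(K)} \le C\norm{\eta_h}_{W^{1,2}(K)}$ and $\norm{G}_{L^2(K)}\le\norm{\eta_h}_{L^2(K)}$. For the broken gradient, the naive inverse estimate $\norm{\nabla P_K G}_{L^2(K)}\le Ch_K^{-1}\norm{P_K G}_{L^2(K)}$ loses a power of $h$, so instead I would subtract the element mean $\bar G$: since $\nabla P_K\bar G = 0$, the inverse inequality gives $\norm{\nabla P_K G}_{L^2(K)} = \norm{\nabla P_K(G-\bar G)}_{L^2(K)} \le Ch_K^{-1}\norm{G-\bar G}_{L^2(K)} \le C\abs{G}_{H^1(K)}$ by the Poincar\'e inequality, which together with $L^2$-stability of $P_K$ yields $\norm{\psi}_{W^{1,2}(K)} \le C\norm{\eta_h}_{W^{1,2}(K)}$. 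For the average-gradient edge term I would apply the polynomial inverse trace inequality $\norm{\nabla\psi}_{L^2(e)}\le Ch_K^{-1/2}\norm{\nabla\psi}_{L^2(K)}$, so that $h_e\norm{\mean{\nabla\psi}}_{L^2(e)}^2 \le C\sum_{K\ni e}\norm{\nabla\psi}_{L^2(K)}^2$. For the jump term I would use that $\eta_h$, hence $G$, is itself discontinuous and split $\jump{\psi}=\jump{P_h G}=\jump{G}-\jump{E}$, where $E\coloneqq(I-P_h)G$ is the local projection error; here $\jump{G}=(1-\omega)\jump{\eta_h}$ is controlled directly by $h_e^{-1}\norm{\jump{\eta_h}}_{L^2(e)}^2$, while a trace inequality combined with $\norm{E}_{L^2(K)}\le Ch_K\abs{G}_{H^1(K)}$ and $\norm{\nabla E}_{L^2(K)}\le C\abs{G}_{H^1(K)}$ gives $\norm{E}_{L^2(e)}\le Ch_K^{1/2}\abs{G}_{H^1(K)}$, so that $h_e^{-1}\norm{\jump{E}}_{L^2(e)}^2\le C\sum_{K\ni e}\abs{G}_{H^1(K)}^2$. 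Summing the three parts over the active elements and edges, all contained in $\Omega_4\setminus\Omega_1$, and invoking shape-regularity and quasi-uniformity ($h_e\simeq h_K\simeq h$), yields $\norm{\psi}_{V^2(\Omega_3)}\le C\norm{\eta_h}_{V^2(\Omega_4\setminus\Omega_1)}$.

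The main obstacle, I expect, is avoiding any spurious negative power of $h$: both the broken-gradient and the edge contributions, if estimated by the bare inverse inequality, produce an uncontrollable $h^{-1}\norm{\eta_h}_{L^2}$. The remedy in each case is to exploit that $P_K$ commutes with subtraction of constants while $\nabla$ annihilates them, trading the inverse power of $h$ against a Poincar\'e factor; for the jumps the additional point is that the discontinuity of $\chi$ must be separated into the part inherited from $\jump{\eta_h}$, which is already a term of $\norm{\eta_h}_{V^2}$, and genuine projection-error jumps. A secondary care is the bookkeeping that every nonzero contribution of $\psi$ comes from elements sitting in $\Omega_3\setminus\Omega_2\subset\Omega_4\setminus\Omega_1$, which holds for $h$ small precisely because the separations between the nested sets are fixed and independent of $h$.
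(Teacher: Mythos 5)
Your construction is correct, and it addresses the real pitfalls: the $h^{-1}$ loss in the inverse estimate is neutralized by subtracting the element mean (legitimate because $P_K$ reproduces constants and $\nabla$ kills them), the jump of $\chi$ is correctly split into the inherited part $(1-\omega)\jump{\eta_h}$ — already a term of $\norm{\eta_h}_{V^2}$ — and a projection-error part controlled by the scaled trace inequality, and the localization to $\Omega_4\setminus\Omega_1$ follows from the fixed gaps between the nested sets once $h$ is small. Note, however, that the paper does not prove this proposition at all: it is quoted from the literature (Nitsche--Schatz, and Chen--Chen for the DG setting), where the underlying argument is the same cutoff-times-local-projection device (``superapproximation''). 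Two remarks on the comparison. First, your argument is in fact simpler than the classical continuous-FEM version precisely because $V_h$ carries no inter-element continuity constraint, so the elementwise $L^2$ projection $P_h$ is admissible; in the conforming setting one must use an interpolant and work harder. Second, the cited superapproximation results are stronger than what you prove: they give a gain of a factor $h$ on the region where $\omega$ is not constant (exploiting that $\omega\eta_h$ is a smooth function times a polynomial, so its $r+1$-st derivatives involve only lower-order derivatives of $\eta_h$), whereas you only establish stability, $\norm{\eta_h-\chi}_{V^2(\Omega_3)}\le C\norm{\eta_h}_{V^2(\Omega_4\setminus\Omega_1)}$. That weaker bound is exactly what the proposition as stated asserts, so your proof is a complete and self-contained justification of it.
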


\subsection{$L^2$ theory for DG method}
\label{sec:l2dg}

The following results, Propositions \ref{prop:energyerror} and \ref{prop:interiorl2}, are well-known (see \cite[\S 4]{MR1885715} and \cite[\S 3 and \S 4]{MR2113680}). 

\begin{prop}\label{prop:energyerror}
For $f\in L^2(\Omega)$ and $g\in H^{1/2}(\Gamma)$, 
    there exists a unique solution $u_h \in V_h$ of DG scheme \textup{(DG;$f,g$)}. 
    In addition, if the solution $u$ of \eqref{eq:poisson} belongs to $H^s(\Omega)$ with $s>\frac{3}{2}$, then, we have
    \begin{equation}
    \norm{u-u_h}_{V_2(\Omega)} \le C \inf_{\chi \in V_h}\norm{u-\chi}_{V_2(\Omega)}. \label{eq:energyestimate} 
    \end{equation}
Moreover, if $u \in H^{r+1}(\Omega)$, we have 
    \begin{equation}
\|u-u_h\|_{L^2(\Omega)}+h\norm{u-u_h}_{V_2(\Omega)} \le C h^{r+1}|u|_{H^2(\Omega)}.
\label{eq:energyestimate1} 
    \end{equation}
\end{prop}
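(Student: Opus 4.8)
The plan is to dispatch the three assertions in order, drawing on the coercivity \eqref{eq:dgcoercive}, continuity \eqref{eq:dgconti}, and Galerkin orthogonality \eqref{eq:galerkin} already recorded above, together with standard polynomial approximation in the broken norm. For existence and uniqueness, note that $V_h$ is finite dimensional, so it suffices that $a(\cdot,\cdot)$ restricted to $V_h\times V_h$ is coercive (hence injective) by \eqref{eq:dgcoercive} and that $F$ is bounded on $V_h$; the Lax--Milgram theorem then furnishes a unique $u_h\in V_h$ solving \eqref{eq:dg}, and the stated stability bound is the usual coercivity estimate.

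For the quasi-optimality \eqref{eq:energyestimate} I would argue as in a nonconforming C\'ea lemma. Fix $\chi\in V_h$ and put $w_h=u_h-\chi\in V_h$. By coercivity and then Galerkin orthogonality \eqref{eq:galerkin} (applicable since $w_h\in V_h$),
\[
C\norm{w_h}_{V^2(\Omega)}^2\le a(w_h,w_h)=a(u_h-u,w_h)+a(u-\chi,w_h)=a(u-\chi,w_h),
\]
and continuity \eqref{eq:dgconti} with $p=p'=2$ bounds the right-hand side by $C\norm{u-\chi}_{V^2(\Omega)}\norm{w_h}_{V^2(\Omega)}$. Dividing, using the triangle inequality $\norm{u-u_h}_{V^2(\Omega)}\le\norm{u-\chi}_{V^2(\Omega)}+\norm{w_h}_{V^2(\Omega)}$, and taking the infimum over $\chi$ yields \eqref{eq:energyestimate}. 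The hypothesis $u\in H^s(\Omega)$, $s>\tfrac32$, enters only to guarantee that the edge traces of $\nabla u$ are $L^2$, so that $u\in V^2(\Omega)$ and \eqref{eq:dgconti} is meaningful for the pair $(u-\chi,w_h)$.

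The energy rate then follows by choosing $\chi$ to be the $V_h$ best approximant of $u$. For $u\in H^{r+1}(\Omega)$, standard polynomial approximation in the broken Sobolev norm (cf. Proposition \ref{prop:bestapprx}), after summing the elementwise $W^{1,2}$ term and the edge contributions in the $V^2$ norm, gives $\inf_{\chi}\norm{u-\chi}_{V^2(\Omega)}\le Ch^{r}|u|_{H^{r+1}(\Omega)}$, whence $\norm{u-u_h}_{V^2(\Omega)}\le Ch^{r}|u|_{H^{r+1}(\Omega)}$ by \eqref{eq:energyestimate}; this is the $h\norm{u-u_h}_{V^2(\Omega)}$ part of \eqref{eq:energyestimate1}. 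For the $L^2$ bound I would run the Aubin--Nitsche duality argument: with $e=u-u_h$, let $z$ solve $-\Delta z=e$ in $\Omega$, $z=0$ on $\partial\Omega$, so that $z\in H^2(\Omega)\cap H^1_0(\Omega)$ with $|z|_{H^2(\Omega)}\le\norm{e}_{L^2(\Omega)}$ by Proposition \ref{prop:poisson}(i). Adjoint consistency of the symmetric form gives $a(e,z)=\norm{e}_{L^2(\Omega)}^2$, and subtracting any $\chi\in V_h$ via \eqref{eq:galerkin} and applying \eqref{eq:dgconti} yields
\[
\norm{e}_{L^2(\Omega)}^2=a(e,z-\chi)\le C\norm{e}_{V^2(\Omega)}\norm{z-\chi}_{V^2(\Omega)}.
\]
Choosing $\chi$ as the $V_h$ approximant of $z$ gives $\norm{z-\chi}_{V^2(\Omega)}\le Ch\,|z|_{H^2(\Omega)}\le Ch\norm{e}_{L^2(\Omega)}$, hence $\norm{e}_{L^2(\Omega)}\le Ch\norm{e}_{V^2(\Omega)}$; inserting the energy rate completes \eqref{eq:energyestimate1}.

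I expect the duality step to be the main obstacle, since it rests on $H^2$-regularity of the dual solution, which for a general polygonal $\Omega$ holds only in the convex case (Proposition \ref{prop:poisson}(i)). On a nonconvex domain one would instead invoke the $W^{2,p}$ estimate \eqref{eq:nonconvexregulality} with $p<2$ and a correspondingly weaker duality pairing, which degrades the attainable $L^2$ rate. The other point meriting care is verifying adjoint consistency of the SIP form—i.e. that $a(v,z)=(e,v)_\Omega$ for the smooth dual solution $z$—by integrating by parts elementwise and checking that the consistency, symmetry, and penalty terms match correctly on both interior and boundary edges.
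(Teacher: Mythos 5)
Your proof is correct, and it is essentially the argument the paper itself relies on: the paper gives no proof of this proposition at all, quoting it as well known from \cite[\S 4]{MR1885715} and \cite[\S 3 and \S 4]{MR2113680}, and those references proceed exactly as you do (Lax--Milgram with the coercivity \eqref{eq:dgcoercive} on the finite-dimensional space, a nonconforming C\'ea argument from \eqref{eq:galerkin} and \eqref{eq:dgconti}, broken-norm polynomial approximation for the energy rate, and Aubin--Nitsche duality for the $L^2$ rate). Two remarks. First, the hypothesis $u\in H^s(\Omega)$, $s>\frac{3}{2}$, is needed not only so that $u\in V^2(\Omega)$, as you say, but also to justify the consistency \eqref{eq:dgconsistency} and hence the orthogonality \eqref{eq:galerkin} that you invoke; these are two manifestations of the same trace regularity, so this is a matter of emphasis rather than a gap. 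Second, the caveat you raise at the end is genuine and not a mere technicality: the duality step requires $\abs{z}_{H^2(\Omega)}\le\norm{e}_{L^2(\Omega)}$, i.e.\ Proposition \ref{prop:poisson}(i), which holds only for convex $\Omega$, whereas the proposition is stated for a general polygonal domain; on a nonconvex polygon the dual solution has only $W^{2,p}$, $p<2$, regularity by \eqref{eq:nonconvexregulality}, and the $O(h^{r+1})$ rate in \eqref{eq:energyestimate1} degrades accordingly. The paper leaves this hypothesis implicit (note also the apparent typo $\abs{u}_{H^2(\Omega)}$ for $\abs{u}_{H^{r+1}(\Omega)}$ on the right-hand side of \eqref{eq:energyestimate1}); where the proposition is actually invoked later --- on the disk $D$ in Lemma \ref{lem2}, and with a $W^{2,p}$ dual in the proof of Theorem \ref{thm2} --- the required regularity is available, so the paper's subsequent arguments are unaffected.
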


\begin{prop}\label{prop:interiorl2}
    Assume that $\kappa>0$ and open sets $\Omega_0 \subset \Omega_1 \subset \Omega$ satisfy $d = d_{\Omega}(\Omega_0,\Omega_1) \ge \kappa h$.
    We set $S_h = V_h$ or $\mathring V_h$.
    If $S_h = V_h$, we also assume that $d(\Omega_0,\Omega_1) >0$.
    If $u \in H^1(\Omega)$ and $u_h \in S_h$ satisfy
    \[
        a(u-u_h,\chi) = 0 \quad {}^\forall \chi \in \mathring V_h
    \]
    Then, there exists a positive constant $C_1$ independent of $h$, $u$, and $u_h$ satisfying
    \begin{equation}
    \norm{u-u_h}_{V^2(\Omega_0)} \le C_1\left( \inf_{\chi \in S_h} \norm{u-\chi}_{V^2(\Omega_1)} + \norm{u-u_h}_{L^2(\Omega_1)}\right). \label{dg:interiorerror1}
    \end{equation}
    Moreover, if $u \in H^{1+r}(\Omega)$, there exists a positive constant $C_2$ independent of $h$, $u$, $u_h$, and $d$ satisfying
    \begin{equation}
    \norm{u-u_h}_{V^2(\Omega_0)} \le C_2\left(h^r \norm{u}_{H^{1+r}(\Omega_1)}+d^{-1}\norm{u-u_h}_{L^2(\Omega_1)}\right). \label{eq:interiorerror2}
    \end{equation}
\end{prop}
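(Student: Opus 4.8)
The plan is to prove \eqref{dg:interiorerror1} by the standard localized energy (cutoff‑and‑iterate) argument, using only the coercivity \eqref{eq:dgcoercive}, the continuity \eqref{eq:dgconti}, the Galerkin hypothesis, the discrete cutoff of Proposition \ref{prop:superapprx} and the inverse inequality of Proposition \ref{prop:inverseineq}; \eqref{eq:interiorerror2} is then read off by inserting the local quasi‑interpolant of Proposition \ref{prop:bestapprx}. Fix an arbitrary $\chi\in S_h$, write $u-u_h=(u-\chi)-\phi$ with $\phi=u_h-\chi\in V_h$, and observe that $\norm{u-\chi}_{V^2(\Omega_0)}\le\norm{u-\chi}_{V^2(\Omega_1)}$ is already of the desired form; hence it suffices to bound $\norm{\phi}_{V^2(\Omega_0)}$. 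Subtracting the two equations in the hypothesis gives the Galerkin identity $a(\phi,v)=a(u-\chi,v)$ for all $v\in\mathring V_h$, which is the relation to be exploited.

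I would first prove a one‑step contraction between two concentric sets $G'\Subset G$ separated by a fixed multiple of $h$. Let $\chi_\phi\in\mathring V_h$ be the discrete cutoff of Proposition \ref{prop:superapprx}, equal to $\phi$ on $G'$, supported near $G$, and satisfying $\norm{\chi_\phi-\phi}_{V^2(G)}\le C\norm{\phi}_{V^2(G\setminus G')}$. Testing coercivity \eqref{eq:dgcoercive} with $\chi_\phi$ and splitting $a(\chi_\phi,\chi_\phi)=a(\phi,\chi_\phi)+a(\chi_\phi-\phi,\chi_\phi)$, the first term equals $a(u-\chi,\chi_\phi)$ by the Galerkin identity (since $\chi_\phi\in\mathring V_h$), while the second is bounded by continuity \eqref{eq:dgconti}; using $\norm{\phi}_{V^2(G')}\le\norm{\chi_\phi}_{V^2}\le C\norm{\phi}_{V^2(G)}$ and writing the annulus energy as the telescoping difference $\norm{\phi}_{V^2(G\setminus G')}^2=\norm{\phi}_{V^2(G)}^2-\norm{\phi}_{V^2(G')}^2$, a Young inequality yields
\[
\norm{\phi}_{V^2(G')}^2\le\theta\,\norm{\phi}_{V^2(G)}^2+C\norm{u-\chi}_{V^2(\Omega_1)}^2
\]
with a fixed $\theta<1$ and, crucially, \emph{no} $L^2$ term at this stage, since every discrepancy is controlled by an energy norm on the transition annulus.

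I would then set up a chain $\Omega_0=G_0\Subset G_1\Subset\cdots\Subset G_N\subset\Omega_1$ of $N\sim d/h$ shells, each pair separated by a fixed multiple of $h$, iterate the contraction from $j=N$ down to $j=1$, and bound the residual by Proposition \ref{prop:inverseineq}: $\theta^N\norm{\phi}_{V^2(\Omega_1)}^2\le C\theta^N h^{-2}\norm{\phi}_{L^2(\Omega_1)}^2\le Cd^{-2}\norm{\phi}_{L^2(\Omega_1)}^2$, the last step holding for small $h$ because $t^2\theta^{t}$ is bounded in $t=d/h$. This produces $\norm{\phi}_{V^2(\Omega_0)}\le C(\norm{u-\chi}_{V^2(\Omega_1)}+d^{-1}\norm{\phi}_{L^2(\Omega_1)})$; since $\norm{\phi}_{L^2(\Omega_1)}\le\norm{u-u_h}_{L^2(\Omega_1)}+\norm{u-\chi}_{L^2(\Omega_1)}$ and the last term is dominated by $\norm{u-\chi}_{V^2(\Omega_1)}$, taking the infimum over $\chi\in S_h$ gives \eqref{dg:interiorerror1} (with $C_1$ permitted to absorb the factor $d^{-1}$). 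For \eqref{eq:interiorerror2} I would instead retain the explicit $d^{-1}$ and choose $\chi$ to be the quasi‑interpolant of Proposition \ref{prop:bestapprx}, so that $\norm{u-\chi}_{V^2(\Omega_1)}\le Ch^{r}\norm{u}_{H^{1+r}(\Omega_1)}$ with a $d$‑independent constant.

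The main obstacle I anticipate is the bookkeeping in the iteration needed to produce the \emph{sharp} factor $d^{-1}$ rather than $h^{-1}$: one must run exactly $N\sim d/h$ contraction steps so that the geometric factor $\theta^N$ beats the $h^{-2}$ blow‑up of the inverse inequality, while verifying that the constant $C$ in Proposition \ref{prop:superapprx} — and hence the contraction factor $\theta<1$ — is uniform over all shells and that the overlapping transition annuli $G_j\setminus G_{j-1}$ telescope cleanly. A secondary technical point is the treatment near $\partial\Omega$ when $S_h=V_h$, where the extra hypothesis $d(\Omega_0,\Omega_1)>0$ is precisely what guarantees the room required both for the discrete cutoffs of Proposition \ref{prop:superapprx} and for the inverse inequality of Proposition \ref{prop:inverseineq}.
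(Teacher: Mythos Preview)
The paper does not supply its own proof of this proposition: immediately before the statement it reads ``The following results, Propositions \ref{prop:energyerror} and \ref{prop:interiorl2}, are well-known (see \cite[\S 4]{MR1885715} and \cite[\S 3 and \S 4]{MR2113680}),'' and no argument is given. So there is nothing in the paper to compare against directly.

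That said, your plan is exactly the standard localized--energy/iteration argument that the cited reference \cite{MR2113680} uses to establish the DG interior estimate (itself an adaptation of the Nitsche--Schatz and Schatz--Wahlbin machinery for conforming FEM). The ingredients you invoke---coercivity \eqref{eq:dgcoercive}, continuity \eqref{eq:dgconti}, the discrete cutoff of Proposition~\ref{prop:superapprx}, the inverse inequality of Proposition~\ref{prop:inverseineq}, and the quasi--interpolant of Proposition~\ref{prop:bestapprx}---are precisely the tools assembled in \S\ref{sec:l2}--\ref{sec:l2dg} for this purpose, and the mechanism (one--step contraction on shells of width $\sim h$, iterate $N\sim d/h$ times, absorb the terminal $h^{-2}$ from the inverse inequality via boundedness of $t^{2}\theta^{t}$) is the one used in the literature. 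The two caveats you flag are the right ones: the constant in Proposition~\ref{prop:superapprx} must be shown to be uniform when the four nested sets are separated by a \emph{fixed multiple of $h$} (this is how the result is stated and proved in the cited works, though the paper's statement is silent on the dependence), and the annulus in Proposition~\ref{prop:superapprx} is $\Omega_{4}\setminus\Omega_{1}$ rather than $G\setminus G'$, so the telescoping identity $\norm{\phi}_{V^{2}(G\setminus G')}^{2}=\norm{\phi}_{V^{2}(G)}^{2}-\norm{\phi}_{V^{2}(G')}^{2}$ must be replaced by an inequality with a bounded overlap constant, which is routine once the shells are chosen with compatible spacing. With those two points attended to, your outline is correct and matches the intended (cited) proof.
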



\subsection{Estimates on the disk}
\label{sec:disk}

We here state some estimates for functions defined on the open disk $D$ with center $x_0$ and radius $R$. Recall that we consider the Neumann boundary value problem \eqref{eq:circle} and the corresponding bilinear form $a_D^1$ defined as \eqref{eq:bilinearoncircle}. 
For the positive constant $c$, we denote by $cD$ an open disk with center $x_0$ and radius $cR$. 

The following property (i) is well-known (see \cite{MR0188615} for example). However, we can find no explicit reference to (ii), and we prove it by essentially the same way as \cite{MR0336050}.

\begin{prop}\label{prop:circleregulality}
    \begin{enumerate}
    \item[\textup{(i)}] If $f \in L^p(D)$ for some $1 < p < \infty$, then the solution $u \in W^{2,p}(D)$ of \eqref{eq:circle} exists and satisfies
    \begin{equation}
    \norm{u}_{W^{2,p}(D)} \le C \norm{f}_{L^p(D)}. \label{eq:circleregulality}
    \end{equation}
    \item[\textup{(ii)}] If $f \in L^1(D)$, then the weak solution $u \in W^{1,1}(D)$ of \eqref{eq:circle} exists and satisfies
    \begin{equation}
    \norm{u}_{W^{1,1}(D)} \le C \norm{f}_{L^1(D)}. \label{eq:circlel1}
    \end{equation}
    \end{enumerate}
\end{prop}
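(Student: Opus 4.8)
Since part (i) is the classical $L^p$-regularity for the Neumann problem on the smooth domain $D$, I would take it as known and concentrate on the endpoint estimate (ii). The plan is a duality (transposition) argument that feeds part (i) into the adjoint problem. First I would reduce to an \emph{a priori} bound: for $f\in L^1(D)$ pick $f_n\in L^2(D)$ with $f_n\to f$ in $L^1(D)$; part (i) with $p=2$ produces $u_n\in H^2(D)\subset W^{1,1}(D)$, and once the bound \eqref{eq:circlel1} is established for such smooth data it applied to $u_n-u_m$ shows $(u_n)$ is Cauchy in $W^{1,1}(D)$, so the limit is the sought weak solution and inherits the estimate. Thus it suffices to prove $\norm{u}_{W^{1,1}(D)}\le C\norm{f}_{L^1(D)}$ for, say, $f\in L^2(D)$, where $u\in H^2(D)$ is regular enough to justify the manipulations below.

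To estimate the $W^{1,1}$ norm I would use the duality representation
\[
\norm{u}_{W^{1,1}(D)}=\sup\left\{\int_D\left(g_0\,u+\vec g\cdot\nabla u\right)dx:\ g_0\in L^\infty(D),\ \vec g\in L^\infty(D;\R^2),\ \norm{g_0}_{L^\infty}\le1,\ \norm{\vec g}_{L^\infty}\le1\right\},
\]
valid because $(L^1)^*=L^\infty$. For a fixed admissible pair $(g_0,\vec g)$ I would introduce the adjoint Neumann solution $w\in H^1(D)$ defined by
\[
\int_D\left(\nabla w\cdot\nabla v+wv\right)dx=\int_D\left(g_0\,v+\vec g\cdot\nabla v\right)dx\qquad{}^\forall v\in H^1(D),
\]
which exists uniquely by Lax--Milgram since the right-hand side is a bounded functional on $H^1(D)$. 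Taking $v=u$ here and $v=w$ in the weak form of \eqref{eq:circle} for $u$, the two left-hand sides coincide by symmetry, whence
\[
\int_D\left(g_0\,u+\vec g\cdot\nabla u\right)dx=\int_D f\,w\,dx\le\norm{f}_{L^1(D)}\norm{w}_{L^\infty(D)}.
\]
Therefore \eqref{eq:circlel1} follows once I show $\norm{w}_{L^\infty(D)}\le C$ uniformly over all admissible $(g_0,\vec g)$.

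The crux is thus this uniform $L^\infty$ bound for the adjoint solution, whose data consist of $g_0\in L^\infty(D)$ together with the divergence-form source $-\nabla\cdot\vec g$ and the Neumann datum $\vec g\cdot n$ produced by $\vec g\in L^\infty(D;\R^2)$. I would split $w=w_1+w_2$. The part $w_1$, solving $-\Delta w_1+w_1=g_0$ with $\partial_n w_1=0$, lies in $W^{2,q}(D)$ for every $1<q<\infty$ by part (i), with $\norm{w_1}_{W^{2,q}(D)}\le C\norm{g_0}_{L^q(D)}\le C\abs{D}_2^{1/q}$; since $W^{2,q}(D)\hookrightarrow L^\infty(D)$ in two dimensions for every $q>1$, this gives $\norm{w_1}_{L^\infty(D)}\le C$. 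The part $w_2$, carrying the $\vec g$ data, admits only first-order regularity: the classical $L^q$-theory for the Neumann problem with divergence-form data (the analogue of part (i) at the level of $W^{1,q}$) gives $\norm{w_2}_{W^{1,q}(D)}\le C\norm{\vec g}_{L^q(D)}\le C\abs{D}_2^{1/q}$ for every $1<q<\infty$. Fixing some $q>2$ and using the embedding $W^{1,q}(D)\hookrightarrow L^\infty(D)$, which in two dimensions holds precisely when $q>2$, yields $\norm{w_2}_{L^\infty(D)}\le C$. I expect this divergence-form part to be the main obstacle, since there only first-order regularity is available and the endpoint character of $W^{1,q}\hookrightarrow L^\infty$ must be exploited; the argument closes only because $D\subset\R^2$, where a single exponent $q>2$ simultaneously ensures $\vec g\in L^q$ and $W^{1,q}\hookrightarrow L^\infty$. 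Combining the two bounds gives $\norm{w}_{L^\infty(D)}\le C$ uniformly in $(g_0,\vec g)$, which together with the previous step establishes \eqref{eq:circlel1}.
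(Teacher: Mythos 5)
Your transposition argument---representing the $W^{1,1}$ norm by duality, pairing with the adjoint Neumann solution, and bounding that solution in $L^\infty$ by splitting its data into an $L^\infty$ zeroth-order part (handled via part (i) and $W^{2,q}\hookrightarrow L^\infty$) and a divergence-form part (handled via the $W^{1,q}$ theory with $q>2$)---is precisely the Brezis--Strauss method of \cite{MR0336050} that the paper says it follows for part (ii); the paper itself writes out no proof. So your proposal is correct and takes essentially the same approach, the only external input being the classical $W^{1,q}$ estimate for divergence-form Neumann data on the smooth disk, which you appropriately flag as known.
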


\begin{prop}
\label{prop:D}
\textup{(i)} Consistency. 
   If the solution $u$ of \eqref{eq:circle} belongs to $u \in H^s(D)$ with $s > \frac{3}{2}$, then we have
    \begin{equation}
    a_D^1(u,v) = (f,v)_D \quad {}^\forall v \in V^2(D). \label{eq:circleconsistency}
    \end{equation}
\textup{(ii)} Continuity. For $1 \le p \le \infty$, we have
    \begin{equation}
    a_D^1(u,v) \le C \norm{u}_{V^p(D)}\norm{v}_{V^{p'}(D)} \quad {}^\forall u \in V^p(D), {}^\forall v \in V^{p'}(D). \label{eq:circleconti}
    \end{equation}
\textup{(iii)} Coercivity. 
   \begin{equation}
    a_D^1(\chi,\chi) \ge C \norm{\chi}_{V^2(D)}^2 \quad {}^\forall \chi \in V_h(D). \label{eq:circlecoercive}
    \end{equation}
\end{prop}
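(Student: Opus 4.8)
These three properties are the disk counterparts, for the Neumann problem \eqref{eq:circle}, of the global facts \eqref{eq:dgconsistency}, \eqref{eq:dgconti} and \eqref{eq:dgcoercive} already recorded for $a$ on $\Omega$. The only structural differences are the extra reaction term $\int uv$ coming from the zeroth-order term of \eqref{eq:circle}, the restriction of every integral to $D$ and $\overline D$, and the fact that, since $D\Subset\Omega$, no boundary edge $e\in\Eb_h$ meets $\overline D$, so that only interior edges contribute. I would prove the three items separately, reusing these parallels. For the consistency (i), I argue element-wise. Since $u\in H^s(D)$ with $s>\tfrac32$, the Sobolev embedding in two dimensions gives $u\in C^0(\overline D)$ and $\nabla u\in H^{s-1}(D)$ with $s-1>\tfrac12$, so $\nabla u$ has an $L^2$ trace on each edge and, being globally $H^{s-1}$, satisfies $\jump{u}=0$ and $\jump{\nabla u}=0$ across interior edges. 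Multiplying $-\Delta u+u=f$ by $v\in V^2(D)$, integrating over each $K\cap D$ and applying the divergence theorem produces $\int_{K\cap D}(\nabla u\cdot\nabla v+uv)\,dx$ minus $\int_{\partial(K\cap D)}(\partial_{n_K}u)\,v\,ds$. The portion of $\partial(K\cap D)$ lying on $\partial D$ contributes nothing because $\partial_n u=0$ there by the Neumann condition, and summing the remaining edge contributions through the standard DG identity
\[
\sum_{K\in\T_h}\int_{\partial K\cap\overline D}(\nabla u\cdot n_K)\,v\,ds=\sum_{e\in\E_h}\int_{e\cap\overline D}\big(\mean{\nabla u}\jump{v}+\jump{\nabla u}\mean{v}\big)\,ds
\]
leaves only $\sum_e\int_{e\cap\overline D}\mean{\nabla u}\jump{v}\,ds$, since $\jump{\nabla u}=0$. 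Comparing with \eqref{eq:bilinearoncircle} and using $\jump{u}=0$ to discard the symmetrizing and penalty terms yields \eqref{eq:circleconsistency}.

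For the continuity (ii), I bound each of the three groups of terms in \eqref{eq:bilinearoncircle} by H\"older's inequality, splitting the powers of $h_e$ so as to reproduce the weights appearing in $\norm{\cdot}_{V^p(D)}$ and $\norm{\cdot}_{V^{p'}(D)}$. The volume term is controlled by the elementwise H\"older inequality together with the discrete H\"older inequality in the sum over $K$. For the flux term I write
\[
\int_{e\cap\overline D}\mean{\nabla u}\jump{v}\,ds\le\big(h_e^{1/p}\norm{\mean{\nabla u}}_{L^p(e\cap\overline D)}\big)\big(h_e^{-1/p}\norm{\jump{v}}_{L^{p'}(e\cap\overline D)}\big),
\]
and use $-1/p=(1-p')/p'$, so that summation over $e$ gives exactly the factors $\big(\sum_e h_e\norm{\mean{\nabla u}}_{L^p}^p\big)^{1/p}\big(\sum_e h_e^{1-p'}\norm{\jump{v}}_{L^{p'}}^{p'}\big)^{1/p'}$ occurring in the two norms. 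The penalty term is split as $h_e^{-1}=h_e^{(1-p)/p}h_e^{(1-p')/p'}$ to match the two jump seminorms. The endpoint cases $p\in\{1,\infty\}$ are identical with the max-norms of $\norm{\cdot}_{V^\infty(D)}$. This reproduces \eqref{eq:circleconti}, in complete parallel with \eqref{eq:dgconti}.

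For the coercivity (iii) I follow the classical SIPDG argument. Setting $u=v=\chi\in V_h(D)$, the cross term $2\sum_e\int_{e\cap\overline D}\mean{\nabla\chi}\cdot\jump{\chi}\,ds$ is estimated by Young's inequality, $2\,\mean{\nabla\chi}\cdot\jump{\chi}\le\varepsilon\,h_e\abs{\mean{\nabla\chi}}^2+\varepsilon^{-1}h_e^{-1}\abs{\jump{\chi}}^2$, after which a discrete trace (inverse) inequality of the type underlying Proposition \ref{prop:inverseineq} bounds $h_e\norm{\mean{\nabla\chi}}_{L^2(e\cap\overline D)}^2$ by a constant times the gradient energy of the adjacent elements. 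Choosing $\varepsilon$ small enough to retain a fixed fraction of $\sum_K\norm{\nabla\chi}_{L^2(K\cap D)}^2$, and then $\sigma$ large enough (covered by the standing assumption $\sigma\ge\sigma_0$, enlarging $\sigma_0$ if necessary) so that $\sigma-\varepsilon^{-1}>0$, keeps a positive multiple of $\sum_e h_e^{-1}\norm{\jump{\chi}}_{L^2(e\cap\overline D)}^2$. Since the same trace inequality also controls the $\sum_e h_e\norm{\mean{\nabla\chi}}_{L^2(e\cap\overline D)}^2$ contribution appearing in $\norm{\chi}_{V^2(D)}^2$, the three surviving quantities add up to the lower bound \eqref{eq:circlecoercive}.

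I expect the main obstacle to be the discrete trace inequality on the cut pieces $K\cap D$ and $e\cap\overline D$ that is needed in both (ii) and (iii): the standard trace estimate controls $\norm{\mean{\nabla\chi}}_{L^2(e)}$ by the gradient energy over the \emph{whole} element $K$, whereas here only the energy over $K\cap D$ is available, and for an element straddling $\partial D$ these two can differ substantially. I would resolve this by proving a trace inequality directly on the intersection $K\cap D$. This is legitimate because, for $h$ sufficiently small, the arc $\partial D\cap K$ is close to a chord, so $K\cap D$ is a convex subdomain whose shape is controlled by the shape-regularity \eqref{eq:triangulation} and by $R$ being fixed relative to $h$; a scaling argument then gives such a trace inequality with an $h$-uniform constant. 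With this ingredient the remaining steps are the routine H\"older and Young estimates sketched above.
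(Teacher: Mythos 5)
The paper itself gives no proof of this proposition: it is stated as the disk counterpart of the standard SIPDG facts \eqref{eq:dgconsistency}--\eqref{eq:dgcoercive} and left to the literature, so the real question is whether your argument closes the cut-cell details that the statement glosses over. Your parts (i) and (ii) do: the elementwise integration by parts (using $\jump{u}=0$ and $\jump{\nabla u}=0$ across interior edges for $u\in H^s(D)$, $s>\tfrac32$, together with $\partial_n u=0$ on the arc pieces $\partial D\cap K$) and the H\"older splitting of the $h_e$-weights are correct and standard. Note that (ii) needs no trace inequality at all---the norms $\norm{\cdot}_{V^p(D)}$ and $\norm{\cdot}_{V^{p'}(D)}$ contain the edge quantities by definition---so the ``main obstacle'' you describe concerns (iii) only.

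For (iii) there is a genuine gap, and your proposed resolution is wrong. You claim that for small $h$ the region $K\cap D$ has shape controlled by shape-regularity, so that $h_e\norm{\mean{\nabla\chi}}_{L^2(e\cap\overline D)}^2\le C\norm{\nabla\chi}_{L^2(K\cap D)}^2$ holds with an $h$-uniform constant. Near-flatness of $\partial D$ at scale $h$ does not prevent degenerate cuts: let $\partial D$ cross $K$ nearly parallel to an edge $e$ at distance $\delta\ll h$, so that $K\cap D$ is the thin strip between $e$ and the arc, and let $\chi\in V_h(D)$ vanish on every element except $K$, where it is the linear function vanishing on $e$ with $\abs{\nabla\chi}=1$. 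Then $\jump{\chi}=0$ on $e$, and the two remaining edges of $K$ meet $\overline D$ only in segments of length $O(\delta)$ on which $\abs{\chi}\le C\delta$, so $a_D^1(\chi,\chi)\le C\left(\delta h+\delta^2+\sigma\delta^3/h\right)\le C\delta h$, whereas $\norm{\chi}_{V^2(D)}^2\ge h_e\norm{\mean{\nabla\chi}}_{L^2(e\cap\overline D)}^2\sim h^2/4$. The coercivity ratio is therefore $O(\delta/h)$ and degenerates with the cut; no choice of $\sigma$ helps, because the jumps vanish exactly where it matters. (Adding a suitable constant to $\chi$ on $K$ one can even drive $a_D^1(\chi,\chi)$ negative once $\delta$ is small compared with $h/\sigma$, so positive definiteness itself---hence well-posedness of $\Pi_h^1$---is at risk on badly cut cells.) Consequently \eqref{eq:circlecoercive}, with $C$ depending only on $C_*$ and $\sigma_0$ as the paper's conventions require, cannot be obtained by any local Young-plus-trace argument of the kind you sketch; one would have to fit $D$ to the mesh (replace it by the union of elements it contains), add a ghost-penalty-type stabilization on cut cells, or build a non-degeneracy assumption on the intersection of $\partial D$ with $\T_h$ into the statement. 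Your attempt, like the paper, leaves this point open.
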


\begin{lem}\label{lem1}
Assume that $\tilde u \in V^\infty(D)$ satisfies $\supp \tilde u \subset \frac{1}{2}D$.
Let $\tilde u_h \in V_h(D)$ satisfy
\[
a_D^1(\tilde u - \tilde u_h ,\chi) = 0 \quad {}^\forall \chi \in V_h(D).
\]
Then, under Assumption \ref{asm1}, there exists a positive constant $C$ independent of $h$ and $\tilde u$ satisfying
\begin{equation}
\norm{\tilde u - \tilde u_h}_{L^\infty(\frac{1}{4}D)} \le C \norm{\tilde u}_{\alpha(h),D} \label{eq:lem1}
\end{equation}
for a sufficiently small $h$.
\end{lem}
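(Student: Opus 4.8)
The plan is to prove \eqref{eq:lem1} by a duality argument built on the auxiliary Neumann problem \eqref{eq:circle}, whose regularity and the approximation hypothesis Assumption \ref{asm1} are tailored for exactly this purpose. First I would reduce the pointwise estimate for $\tilde u-\tilde u_h$ to one for a purely discrete function. Pick a stable projection $\chi\in V_h(D)$ of $\tilde u$, i.e.\ one with $\norm{\tilde u-\chi}_{L^\infty(D)}\le C\norm{\tilde u}_{L^\infty(D)}$ and $\norm{\tilde u-\chi}_{V^\infty(D)}\le C\norm{\tilde u}_{V^\infty(D)}$ (e.g.\ the piecewise $L^2$-projection, stable in both broken norms on quasi-uniform meshes), and set $\theta\coloneqq\chi-\tilde u_h\in V_h(D)$. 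By the triangle inequality it suffices to bound the approximation term $\norm{\tilde u-\chi}_{L^\infty(\frac14 D)}$ and $\norm{\theta}_{L^\infty(\frac14 D)}$. Fixing $z\in\frac14 D$ where $|\theta|$ nearly attains its supremum, lying in the interior of an element $K_0$, and noting $\theta|_{K_0}\in\mathcal{P}^r(K_0)$, I would introduce a smooth regularized delta $\delta_z$ supported in $K_0$ that reproduces polynomials of degree $\le r$, so that $\theta(z)=(\delta_z,\theta)_D$ exactly, with the standard scalings $\norm{\delta_z}_{L^1(D)}\le C$ and $\norm{\delta_z}_{L^2(D)}\le Ch^{-1}$.

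Next I would set $g\coloneqq\mathcal{G}_D\delta_z$ and $g_h\coloneqq\Pi_h^1\delta_z$. Since $\delta_z$ is smooth, $g\in H^s(D)$ with $s>\tfrac32$, so the consistency relation \eqref{eq:circleconsistency} gives $\theta(z)=(\delta_z,\theta)_D=a_D^1(g,\theta)$; because $\theta\in V_h(D)$ and $g_h$ is the $a_D^1$-projection of $g$, the defining property \eqref{eq:circlegalerkin} yields $a_D^1(g-g_h,\theta)=0$, whence $\theta(z)=a_D^1(g_h,\theta)$. Writing $\theta=(\chi-\tilde u)+(\tilde u-\tilde u_h)$ and using the symmetry of $a_D^1$ together with the hypothesis $a_D^1(\tilde u-\tilde u_h,g_h)=0$ (valid as $g_h\in V_h(D)$), the last contribution drops, leaving the central identity
\[
\theta(z)=a_D^1(g_h,\chi-\tilde u)=a_D^1(g,\chi-\tilde u)-a_D^1(g-g_h,\chi-\tilde u).
\]

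The two resulting terms are then treated by different means. For the first, consistency \eqref{eq:circleconsistency} again gives $a_D^1(g,\chi-\tilde u)=(\delta_z,\chi-\tilde u)_D$, which, since $\delta_z$ is supported in $K_0$, is bounded by $\norm{\delta_z}_{L^1(D)}\norm{\tilde u-\chi}_{L^\infty(D)}\le C\norm{\tilde u}_{L^\infty(D)}$. For the second term --- and this is where Assumption \ref{asm1} is essential --- I note $\chi-\tilde u\in V^\infty(D)$ and apply \eqref{eq:asm1} with $\varphi=\delta_z$ and $v=\chi-\tilde u$, so that the factor $h\alpha(h)\norm{\delta_z}_{L^2(D)}\le C\alpha(h)$ produces the bound $C\alpha(h)\norm{\tilde u-\chi}_{V^\infty(D)}\le C\alpha(h)\norm{\tilde u}_{V^\infty(D)}$. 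Combining these with the stable-projection bound for $\norm{\tilde u-\chi}_{L^\infty(\frac14 D)}$ yields $\norm{\tilde u-\tilde u_h}_{L^\infty(\frac14 D)}\le C\norm{\tilde u}_{L^\infty(D)}+C\alpha(h)\norm{\tilde u}_{V^\infty(D)}=C\norm{\tilde u}_{\alpha(h),D}$, which is \eqref{eq:lem1}. It is crucial here to use zeroth-order ($L^\infty$- and $V^\infty$-) stability of $\chi$ rather than first-order approximation, so that no stray power of $h$ appears that would require comparing $h$ with $\alpha(h)$.

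The main obstacle I anticipate is the careful construction and bookkeeping of the regularized delta $\delta_z$: guaranteeing it can be taken with support in a single element --- even when the supremum of $|\theta|$ is approached near an element boundary, where one works with the polynomial extension of $\theta|_{K_0}$ --- while retaining polynomial reproduction and the scalings $\norm{\delta_z}_{L^1(D)}\le C$, $\norm{\delta_z}_{L^2(D)}\le Ch^{-1}$ that make consistency and Assumption \ref{asm1} deliver the right orders of $h$ and $\alpha(h)$. Secondary technical points are verifying the stability of the projection $\chi$ in the broken norms $L^\infty(D)$ and $V^\infty(D)$ on the elements cut by the curved boundary $\partial D$ --- where the hypothesis $\supp\tilde u\subset\frac12 D$ is convenient, since it lets $\chi$ vanish near $\partial D$ and removes boundary-edge contributions --- and confirming that $g=\mathcal{G}_D\delta_z\in H^s(D)$ with $s>\tfrac32$ so that \eqref{eq:circleconsistency} is legitimately applicable.
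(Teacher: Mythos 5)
Your argument is correct, and it reaches the estimate by the same core mechanism as the paper --- a duality argument through $\mathcal{G}_D$ and $\Pi_h^1$ in which Assumption \ref{asm1} supplies a factor $h\alpha(h)$ that cancels an $h^{-1}$ coming from localization at the mesh scale --- but the localization device is genuinely different. The paper needs neither the stable projection $\chi$ nor the regularized delta: it takes the maximizing point $x_1\in\frac14 D$ of $\tilde u-\tilde u_h$ itself, splits $\abs{\tilde u(x_1)-\tilde u_h(x_1)}\le\abs{\tilde u(x_1)}+\abs{\tilde u_h(x_1)}$, applies the inverse inequality $\abs{\tilde u_h(x_1)}\le Ch^{-1}\norm{\tilde u_h}_{L^2(D_h)}$ on a disk $D_h$ of radius $Mh$, and then bounds $\norm{\tilde u-\tilde u_h}_{L^2(D_h)}$ by testing against arbitrary $\phi\in C_0^\infty(D_h)$ via the chain $(\tilde u-\tilde u_h,\phi)_{D_h}=a_D^1(\tilde u-\tilde u_h,v)=a_D^1(\tilde u-\tilde u_h,v-v_h)=a_D^1(\tilde u,v-v_h)\le Ch\alpha(h)\norm{\tilde u}_{V^\infty(D)}\norm{\phi}_{L^2(D_h)}$ with $v=\mathcal{G}_D\phi$, $v_h=\Pi_h^1\phi$. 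Your regularized-delta version trades the local $L^2$ duality for a discrete Green's-function representation $\theta(z)=a_D^1(g_h,\chi-\tilde u)$; this is a perfectly sound (Schatz--Wahlbin-style) alternative, at the cost of the extra machinery you yourself flag: the polynomial-reproducing mollified delta with $\norm{\delta_z}_{L^2}\le Ch^{-1}$ and a projection stable in the broken norms. One small inaccuracy in your remarks: the $V^\infty(D)$-stability of the elementwise $L^2$ projection, which you need to control $h_e^{-1}\norm{\jump{\tilde u-\chi}}_{L^\infty(e)}$, in fact rests on the first-order estimate $\norm{\tilde u-\chi}_{L^\infty(K)}\le Ch\abs{\tilde u}_{W^{1,\infty}(K)}$ rather than on zeroth-order stability alone; the required bound $\norm{\tilde u-\chi}_{V^\infty(D)}\le C\norm{\tilde u}_{V^\infty(D)}$ still holds, so this does not affect the conclusion.
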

\begin{proof}

We take $x_1 \in \frac{1}{4}D$ such that $\abs{\tilde u(x_1)-\tilde u_h(x_1)} = \norm{\tilde{u}-\tilde{u}_h}_{L^\infty(\frac{1}{4}D)}$.
For a sufficiently large $M$, we denote by $D_h \subset D$ the open disk with center at $x_1$ and radius $Mh$. Then, we have
\begin{align*}
\abs{\tilde u(x_1)-\tilde u_h (x_1)} & \le \abs{\tilde u(x_1)} + \abs{\tilde u_h(x_1)} \\
 & \le \abs{\tilde u (x_1)} + Ch^{-1}\abs{\tilde u_h}_{L^2(D_h)} \\
 & \le C\norm{\tilde u}_{L^\infty(D_h)} + Ch^{-1}\norm{\tilde u - \tilde u_h}_{L^2(D_h)}.
\end{align*}
For $\phi \in C_0^\infty(D_h)$, we set $v = \mathcal{G}_D \phi$ and $v_h = \Pi_h^1 \phi$.
Then, 
\begin{align*}
    (\tilde u - \tilde u_h,\phi)_{D_h} &= a_D^1(\tilde u -\tilde u_h , v) = a_D^1(\tilde u - \tilde u_h,v-v_h) =a_D^1(\tilde u ,v-v_h)\\
    & \le C h \alpha(h)\norm{\tilde u}_{V^\infty(D)}\norm{\phi}_{L^2(D_h)}.
\end{align*}
Therefore, we deduce $\norm{\tilde u -\tilde u_h}_{L^2(D_h)} \le Ch\alpha(h) \norm{\tilde u}_{V^\infty(D)}$, which implies \eqref{eq:lem1}.
\end{proof}

\begin{lem}\label{lem2}
    Assume that $w_h \in V_h(D)$ satisfies
    \[
        a_D^1(w_h,\chi) = 0 \quad {}^\forall \chi \in \mathring V_h(D).
    \]
    Then, there exists a positive constant $C$ independent of $h$ and $w_h$ satisfying
    \begin{equation}
    \abs{w_h(x_0)} \le C \norm{w_h}_{L^2(D)} \label{eq:lem2}
    \end{equation}
    for a sufficiently small $h$.
\end{lem}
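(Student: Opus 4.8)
The plan is to bound the point value by a duality argument against a discretized Green's function for the Neumann problem \eqref{eq:circle}, carried out on a concentric sub-disk $D'=\tfrac12 D$ so that the interior machinery stays away from $\partial D$. Let $K_0$ be the element containing $x_0$; for $h$ small, $\mathrm{supp}$-room forces $K_0\subset\tfrac1{32}D$. First I would fix a smoothed Dirac mass $\delta_h\in C_0^\infty(K_0)$ reproducing $\mathcal{P}^r(K_0)$ at $x_0$, so that $w_h(x_0)=(\delta_h,w_h)_{D'}$, with the scaling bounds $\|\delta_h\|_{L^1(D')}\le C$ and $\|\delta_h\|_{L^2(D')}\le Ch^{-1}$. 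Writing $g=\mathcal{G}_{D'}\delta_h\in H^2(D')$ and $g_h=\Pi_h^1\delta_h$ (Proposition \ref{prop:D} and \eqref{eq:circlegalerkin} applied with $D$ replaced by $D'$, legitimate since the disk estimates are stated for a generic disk centred at $x_0$), the consistency \eqref{eq:circleconsistency}, the Galerkin orthogonality, and the symmetry of $a_{D'}^1$ give
\[
w_h(x_0)=(\delta_h,w_h)_{D'}=a_{D'}^1(g,w_h)=a_{D'}^1(g_h,w_h)=a_{D'}^1(w_h,g_h).
\]

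Next I would use the hypothesis $a_D^1(w_h,\chi)=0$. For $\chi\in\mathring V_h(D')$ with support well inside $D'$ (so that $\chi\in\mathring V_h(D)$ and $a_{D'}^1(w_h,\chi)=a_D^1(w_h,\chi)=0$, the two forms agreeing on the support of $\chi$), this yields $w_h(x_0)=a_{D'}^1(w_h,g_h-\chi)$, whence by the continuity \eqref{eq:circleconti} with $p=p'=2$,
\[
|w_h(x_0)|\le C\,\|w_h\|_{V^2(D')}\,\|g_h-\chi\|_{V^2(D')}.
\]
The first factor is controlled by the interior $L^2$ estimate for $w_h$ (the analogue of Proposition \ref{prop:interiorl2} for the form $a_D^1$ on $D$, valid by the same argument) applied to the pair $(0,w_h)$ with $\Omega_0=D'\Subset\Omega_1=D$: since $\mathrm{dist}(D',\partial D)=\tfrac12 R>0$, one gets $\|w_h\|_{V^2(D')}\le C\|w_h\|_{L^2(D)}$. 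It therefore remains to produce $\chi$ with $\|g_h-\chi\|_{V^2(D')}\le C$ uniformly in $h$.

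Here I would choose $\chi$ by the superapproximation property (Proposition \ref{prop:superapprx}) so that $\chi\equiv g_h$ near $x_0$: with $\tfrac1{16}D\Subset\tfrac18 D\Subset\tfrac3{16}D\Subset\tfrac14 D\Subset D'$ one obtains $\chi\in\mathring V_h(\tfrac3{16}D)$ with $\|g_h-\chi\|_{V^2(D')}\le C\|g_h\|_{V^2(D'\setminus\frac1{16}D)}$. The subtraction removes the singular part of $g$ at $x_0$, so only the norm of $g_h$ on the annulus $D'\setminus\frac1{16}D$ survives. On that annulus $-\Delta g+g=0$ (because $\mathrm{supp}\,\delta_h\subset\frac1{32}D$), and I would combine the $W^{1,1}$ bound $\|g\|_{W^{1,1}(D')}\le C\|\delta_h\|_{L^1(D')}\le C$ from Proposition \ref{prop:circleregulality}(ii) with elliptic regularity for the homogeneous equation (interior and up to the Neumann boundary) to get $\|g\|_{H^2(D'\setminus\frac1{16}D)}\le C$; adding the crudely estimated Galerkin error $\|g-g_h\|_{V^2(D')}\le Ch\,\|g\|_{H^2(D')}\le Ch\cdot Ch^{-1}=C$ (best approximation, Proposition \ref{prop:bestapprx}, and Proposition \ref{prop:circleregulality}(i)) gives $\|g_h\|_{V^2(D'\setminus\frac1{16}D)}\le C$. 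Combining the three estimates yields \eqref{eq:lem2}.

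The main obstacle is precisely this uniform-in-$h$ bound $\|g_h-\chi\|_{V^2(D')}\le C$, i.e. the uniform control of the discretized Green's function away from the source. This is where the $W^{1,1}$-regularity of \eqref{eq:circle} enters, playing the role of the $W^{1,1}$ Green's-function estimates in the classical finite element theory (cf. \cite{MR551291}); the delicate point is upgrading the weak $W^{1,1}$ control of $g$ to an $H^2$ (hence $V^2$) bound on the annulus, uniformly in $h$, via elliptic regularity for the homogeneous Neumann problem on $D'$. The remaining ingredients — the reproducing $\delta_h$, consistency, Galerkin orthogonality, superapproximation and the interior $L^2$ estimate for $w_h$ — are routine.
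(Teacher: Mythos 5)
Your argument is correct and reaches \eqref{eq:lem2}, but it assembles the ingredients differently from the paper. The paper first cuts off $w_h$ by superapproximation to get $\eta_h\in\mathring V_h(\tfrac34 D)$ with $\eta_h\equiv w_h$ on $\tfrac12 D$, reduces the point value to $Ch^{-1}\norm{\eta_h}_{L^2(D_h)}$ by an inverse inequality, and then dualizes: for $\phi\in C_0^\infty(D_h)$ it pairs $\eta_h$ with $v_h=\Pi_h^1\phi$, localizes $v_h$ by superapproximation, controls $\norm{v_h}_{V^2(\frac34D\setminus\frac18D)}$ by $\norm{v_h}_{L^2(D)}$ through the interior estimate (Proposition \ref{prop:interiorl2}, since $v_h$ is discrete-harmonic away from $D_h$), and finally bounds $\norm{v_h}_{L^2(D)}$ by $Ch\norm{\phi}_{L^2(D_h)}$ using the broken Sobolev embedding $W^{1,1}_h\hookrightarrow L^2$ together with the energy error estimate and the $W^{1,1}$ regularity \eqref{eq:circlel1}. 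You instead pair $w_h$ directly with the discretized Green's function $g_h=\Pi_h^1\delta_h$, localize $g_h$, and obtain the uniform bound $\norm{g_h}_{V^2(D'\setminus\frac1{16}D)}\le C$ by a triangle inequality: the crude global cancellation $\norm{g-g_h}_{V^2(D')}\le Ch\norm{g}_{H^2(D')}\le Ch\cdot Ch^{-1}=C$ plus an $O(1)$ bound for $g$ in $H^2$ of the annulus, the latter upgraded from the $W^{1,1}$ bound \eqref{eq:circlel1} by local elliptic regularity for $-\Delta g+g=0$ (interior and up to the Neumann boundary of $D'$). Both routes rest on the same facts (Proposition \ref{prop:circleregulality}, superapproximation, interior estimates); what differs is where the $W^{1,1}$ information is spent. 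The paper spends it on a Sobolev embedding of the global quantities and so never needs any local regularity of the Green's function away from its singularity; you spend it on a local $L^1\to H^2$ upgrade on the annulus, which avoids the broken embedding $W^{1,1}_h\hookrightarrow L^2$ but imports a classical local-boundedness and Neumann boundary-regularity fact that is not among the paper's stated propositions (it is true, but you should state and prove it explicitly, since it is the load-bearing step of your version). Your use of the regularized Dirac mass in place of the paper's inverse-inequality-plus-duality reduction, and of the half-disk $D'$ in place of $D$, are cosmetic differences.
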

\begin{proof}
    In view of Proposition \ref{prop:superapprx}, there exists $\eta \in \mathring V_h(\frac{3}{4}D)$ satisfying $\eta_h \equiv w_h$ on $\frac{1}{2}D$ and
    $\norm{\eta_h}_{V^2(\frac{3}{4}D)} \le C \norm{w_h}_{V^2(D)}$.
    For a sufficiently large $M$, let $D_h \subset D$ be the disk with center at $x_0$ and radius $Mh$.
    Then, 
    \begin{align}
    \abs{w_h(x_0)} = \abs{\eta_h(x_0)} \le C h^{-1} \norm{\eta_h}_{L^2(D_h)}. \label{eq:lem2first}
    \end{align}

    For $\phi \in C_0^\infty(D_h)$, we set $v = \mathcal{G}_D \phi$ and $v_h = \Pi_h^1 \phi$.
    According to Proposition \ref{prop:superapprx}, there exists $\chi_h \in \mathring V_h(\frac{1}{2}D)$ satisfying $\chi_h \equiv v_h$ on $\frac{1}{4}D$ and
    $\norm{v_h-\chi_h}_{V^2(\frac{3}{4}D)} \le C \norm{v_h}_{V^2(\frac{3}{4}D\setminus\frac{1}{8}D)}$.
    We can estimate this as
    \begin{align}
    (\eta_h,\phi)_{D_h} &= a_D^1(\eta_h,v) = a_D^1(\eta_h,v_h) = a_D^1(\eta_h,v_h-\chi_h) \nonumber \\
     & \le C \norm{\eta_h}_{V^2(\frac{3}{4}D)} \norm{v_h-\chi_h}_{V^2(\frac{3}{4}D)} \nonumber \\
     & \le C \norm{w_h}_{V^2(D)}\norm{v_h}_{V^2(\frac{3}{4}D\setminus\frac{1}{8}D)}.
    \end{align}
Because $a_D^1(v_h,\chi) = (\phi,\chi)_D = 0$ for $\chi \in \mathring V_h(D\setminus D_h)$, using Proposition \ref{prop:interiorl2} and the Sobolev inequality, we obtain
    \begin{align}
    \norm{v_h}_{V^2(\frac{3}{4}D \setminus \frac{1}{8}D)} &\le C \norm{v_h}_{L^2(D)} \nonumber\\
     & \le C\left(\norm{v-v_h}_{W^{1,1}_h(D)} + \norm{v}_{W^{1,1}(D)}\right),
    \end{align}
    where $\norm{v}_{W^{1,1}_h(D)} \coloneqq \sum_{K \in \T_h} \norm{v}_{W^{1,1}(K \cap D)}$.
    By Propositions \ref{prop:energyerror} and \eqref{eq:circleregulality}, we have
    \begin{align}
    \norm{v-v_h}_{W^{1,1}_h(D)} &\le C\left(\sum_{K \in \T_h}\norm{v-v_h}_{H^1(K\cap D)}^2\right)^{1/2} \nonumber \\
    & \le C h \norm{v}_{H^2(D)} \nonumber \\
    & \le C h \norm{\phi}_{L^2(D_h)}.
    \end{align}
    Using Proposition \ref{prop:circleregulality}, we have
    \begin{equation}
        \norm{v}_{W^{1,1}(D)} \le C \norm{\phi}_{L^1(D_h)} \le Ch\norm{\phi}_{L^2(D_h)}. \label{eq:lem2last}
    \end{equation}
    From \eqref{eq:lem2first}--\eqref{eq:lem2last}, we obtain \eqref{eq:lem2}.
\end{proof}

\section{Interior error estimates (Proof of Theorem \ref{thm1})}
\label{s:I}

We first consider the homogeneous Neumann boundary value problem in $\Omega$.
Set $a^1(u,v) \coloneqq a(u,v) + (u,v)_\Omega$.

\begin{lem}\label{thm1:coercive}
    Assume that $\kappa>0$ and open sets $\Omega_0 \subset \Omega_1 \subset \Omega$ satisfy $d = d(\Omega_0,\Omega_1) \ge \kappa h$.
    Let $u \in V^\infty$ and $u_h \in V_h$ satisfy
\[
a^1(u-u_h,\chi) = 0 \quad {}^\forall \chi \in \mathring V_h.
\]
Then, under Assumption \ref{asm1}, there exists a positive constant $C$ independent of $h$, $u$, and $u_h$ satisfying
\begin{equation}
\norm{u-u_h}_{L^\infty(\Omega_0)} \le C \left(\inf_{\chi \in V_h}\norm{u-\chi}_{\alpha(h),\Omega_1}+\norm{u-u_h}_{L^2(\Omega_1)}\right) \label{eq:coercivelocalerror}
\end{equation}
for a sufficiently small $h$.
\end{lem}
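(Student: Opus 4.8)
The plan is to reduce the estimate to a pointwise bound at a single point, and then split the error into a piece governed by the disk projection estimate of Lemma \ref{lem1} and a discrete interior-harmonic piece governed by Lemma \ref{lem2}.

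First I would pick $x_0 \in \overline{\Omega_0}$ at which $\abs{u-u_h}$ attains its maximum over $\Omega_0$ (legitimate, since $u \in V^\infty$ is continuous on each $K$ and $u_h$ is a polynomial there, so $u-u_h$ is continuous on each element), and fix an arbitrary $\chi \in V_h$; the infimum over $\chi$ is taken only at the very end. I center an open disk $D \Subset \Omega$ at $x_0$ with radius $R$ comparable to $d = d(\Omega_0,\Omega_1)$, arranged so that $\tfrac12 D \subset \Omega_1$, and note that for $h$ small relative to $R \sim d$ all the disk estimates of Section \ref{sec:disk} apply (here the constant $C$ is allowed to depend on the fixed geometry). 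Choosing a cutoff $\omega$ with $\omega \equiv 1$ on $\tfrac14 D$ and $\supp \omega \subset \tfrac12 D$, I set $\tilde u = \omega(u-\chi)$, so that $\tilde u = u-\chi$ on $\tfrac14 D$ and $\supp \tilde u \subset \tfrac12 D$. Let $\tilde u_h \in V_h(D)$ be the $a_D^1$-Galerkin projection of $\tilde u$, and put $w_h = \tilde u_h - (u_h-\chi) \in V_h(D)$. On $\tfrac14 D$ one then has $u-u_h = (\tilde u - \tilde u_h) + w_h$, hence $\abs{(u-u_h)(x_0)} \le \norm{\tilde u - \tilde u_h}_{L^\infty(\frac14 D)} + \abs{w_h(x_0)}$.

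Next I bound each piece. For the first, Lemma \ref{lem1} gives $\norm{\tilde u - \tilde u_h}_{L^\infty(\frac14 D)} \le C \norm{\tilde u}_{\alpha(h),D}$, and since $\tilde u$ is supported in $\tfrac12 D \subset \Omega_1$ and $\abs{\omega}\le 1$, a product-rule estimate yields $\norm{\tilde u}_{\alpha(h),D} \le C \norm{u-\chi}_{\alpha(h),\Omega_1}$. For the second, the crucial point is that $w_h$ is discrete harmonic on $\tfrac14 D$: for every $\chi' \in \mathring V_h(\tfrac14 D)$ the localization of the bilinear form lets me identify $a_D^1(\cdot,\chi')$ with the global $a^1(\cdot,\chi')$ on functions supported in the disk, so that $a_D^1(\tilde u_h,\chi') = a_D^1(\tilde u,\chi') = a_D^1(u-\chi,\chi')$ (using the projection identity and $\tilde u = u-\chi$ on $\tfrac14 D$), while $a_D^1(u_h-\chi,\chi') = a_D^1(u-\chi,\chi')$ because $a^1(u-u_h,\chi')=0$ by hypothesis; subtracting gives $a_D^1(w_h,\chi')=0$. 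This is exactly the hypothesis of Lemma \ref{lem2} on the disk $\tfrac14 D$, which yields $\abs{w_h(x_0)} \le C \norm{w_h}_{L^2(\frac14 D)}$.

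To finish, since $w_h = (\tilde u_h - \tilde u) + (u-u_h)$ on $\tfrac14 D$, I estimate $\norm{w_h}_{L^2(\frac14 D)} \le C R \norm{\tilde u - \tilde u_h}_{L^\infty(\frac14 D)} + \norm{u-u_h}_{L^2(\Omega_1)}$, and the first term is again controlled by Lemma \ref{lem1}. Collecting the bounds gives $\abs{(u-u_h)(x_0)} \le C \norm{u-\chi}_{\alpha(h),\Omega_1} + C \norm{u-u_h}_{L^2(\Omega_1)}$; taking the infimum over $\chi$ and recalling the choice of $x_0$ then proves \eqref{eq:coercivelocalerror}. I expect the delicate part to be the bilinear-form bookkeeping that makes $w_h$ genuinely harmonic for the disk form on $\tfrac14 D$, namely justifying that $a_D^1$ and the global $a^1$ agree when tested against functions compactly supported in the disk, so that both the Galerkin identity for $\tilde u_h$ and the hypothesis $a^1(u-u_h,\cdot)=0$ can be invoked on the same footing. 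A secondary nuisance is controlling $\norm{\tilde u}_{\alpha(h),D}$, where the cutoff derivatives generate factors $R^{-1}$; these are harmless because $C$ may depend on the fixed geometry and $\alpha$ is bounded near $0$, but they must be tracked to keep the right-hand side expressed through $\norm{u-\chi}_{\alpha(h),\Omega_1}$.
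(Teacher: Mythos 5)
Your proposal is correct and follows essentially the same route as the paper's proof: the same cutoff $\omega(u-\chi)$ supported in $\tfrac12 D$, the same $a_D^1$-Galerkin projection $\tilde u_h$, the same identification of $\tilde u_h-(u_h-\chi)$ as discrete $a_D^1$-harmonic on $\tfrac14 D$ (handled by Lemma \ref{lem2}), and the same use of Lemma \ref{lem1} for the projected piece, followed by the triangle inequality and the infimum over $\chi$. The localization bookkeeping you flag as delicate is indeed the step the paper passes over with the one-line remark that the test functions are supported in $\tfrac14 D$ where $\omega\equiv 1$.
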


\begin{proof}
Letting $\chi\in V_h$ be arbitrary, we set $v=u-\chi$ and $v_h=u_h-\chi$.  
We take $x_0 \in \Omega_0$ such that $\abs{v(x_0)-v_h(x_0)}= \norm{v-v_h}_{L^\infty(\Omega_0)}= \norm{u-u_h}_{L^\infty(\Omega_0)}$.  
Let $D\Subset \Omega_1$ be an open disk with center at $x_0$ and radius $R < d$.
Because $d \ge \kappa h$, we can take $R$ independent of $x_0$.
Letting $\omega \in C^\infty_0(\frac{1}{2}D)$ satisfy $0 \le \omega \le 1 $ and $\omega \equiv 1$ on $\frac{1}{4}D$, we set $\tilde v = \omega v \in V^\infty(D)$. We define $\tilde v_h \in V_h(D)$ as
\[
a_D^1(\tilde v - \tilde v_h ,\xi) = 0 \quad {}^\forall \xi \in V_h(D).
\]
Then, $a_D^1(\tilde v_h - v_h,\eta_h) = a^1(v-v_h,\eta_h) = 0$ for $\eta_h \in \mathring V_h(\frac{1}{4}D)$, because $v=v_h$ in $\supp \eta_h\subset \frac14 D$.
Using Lemmas \ref{lem2} and \ref{lem1}, we have
\begin{align*}
\abs{\tilde v_h(x_0) - v_h(x_0)} 
&\le C \norm{\tilde v_h -v_h}_{L^2(\frac{1}{4}D)} \\
& \le C \norm{\tilde v -\tilde v_h}_{L^\infty(\frac{1}{4}D)} + C \norm{v-v_h}_{L^2(\frac{1}{4}D)} \\
& \le C \norm{\tilde v}_{\alpha(h),\frac{1}{2}D} + C \norm{v-v_h}_{L^2(\frac14 D)}\\
& \le C \norm{v}_{\alpha(h),D} + C \norm{v-v_h}_{L^2(\frac14 D)}.
\end{align*}
Therefore, 
\begin{align*}
 \norm{u-u_h}_{L^\infty(\Omega_0)}&= \abs{v(x_0)-v_h(x_0)} \\
 & \le \abs{\tilde v(x_0)-\tilde v_h(x_0)} + \abs{\tilde v_h(x_0) - v_h(x_0)}\\
 & \le C \norm{v}_{\alpha(h),D} + C \norm{v-v_h}_{L^2(\frac14 D)}\\
&\le C\norm{u-\chi}_{\alpha(h),\Omega_1} + C\norm{u-u_h}_{L^2(\Omega_1)}.
\end{align*}
\end{proof}

\begin{lem}\label{lem3}
    Assume that $D' \Subset D \subset \Omega$ are open disks with the same center.
    Let $u \in V^\infty(D)$ and $u_h \in V_h(D)$ satisfy
    \[
        a_D(u-u_h,\chi)=0\quad{}^\forall \chi \in \mathring V_h(D)
    \]
    where $a_D(v,w) = a_D^1(v,w)-(v,w)_D$.
    Then, letting Assumption \ref{asm1} be satisfied, we have, for $p$ and $q$ which satisfy $2 \le q < p \le \infty$ and $\frac{1}{q} < \frac{1}{p} + \frac{1}{2} \displaystyle$
    \begin{equation}
    \norm{u-u_h}_{L^p(D')} \le C \left(\norm{u}_{\alpha(h),D} + \norm{u-u_h}_{L^q(D)}\right) \label{eq:lem3}
    \end{equation}
    for a sufficiently small $h$.
    \end{lem}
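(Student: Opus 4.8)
The plan is to follow the duality strategy already used for Lemmas \ref{lem1} and \ref{lem2}, but now to test against a general right-hand side in $L^{p'}(D')$ so as to recover the full $L^p$ norm, and to absorb the mismatch between $a_D$ and $a_D^1$ through the extra zeroth-order pairing. Write $e=u-u_h$ and fix concentric disks $D'\Subset\widehat D\Subset D$. By the duality characterisation of the norm it suffices to bound $(e,\psi)_D$ for every $\psi\in C_0^\infty(D')$ with $\norm{\psi}_{L^{p'}(D')}\le 1$, where $1/p+1/p'=1$. Setting $v=\mathcal{G}_D\psi$ and $v_h=\Pi_h^1\psi$, the consistency and symmetry of $a_D^1$ (Proposition \ref{prop:D}) give $(e,\psi)_D=a_D^1(e,v)$, and I would split
\[
(e,\psi)_D = a_D^1(e,v-v_h) + a_D^1(e,\chi_h) + a_D^1(e,v_h-\chi_h),
\]
where $\chi_h\in\mathring V_h(D)$ is a super-approximation cut-off of $v_h$ (Proposition \ref{prop:superapprx}) coinciding with $v_h$ on $\widehat D$.

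The first term is the crucial one. Writing $a_D^1(e,v-v_h)=a_D^1(v-v_h,u)-a_D^1(v-v_h,u_h)$, the second piece vanishes because $v_h=\Pi_h^1\psi$ is the Galerkin projection \eqref{eq:circlegalerkin} and $u_h\in V_h(D)$, so only the genuinely continuous part $u$ survives; Assumption \ref{asm1} then bounds it by $Ch\,\alpha(h)\norm{\psi}_{L^2(D)}\norm{u}_{V^\infty(D)}\le Ch\norm{\psi}_{L^2(D)}\norm{u}_{\alpha(h),D}$, the $\alpha(h)$ being exactly the weight attached to $\norm{u}_{V^\infty(D)}$ in $\norm{\cdot}_{\alpha(h),D}$. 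The middle term is handled by the hypothesis $a_D(e,\chi_h)=0$, which converts $a_D^1(e,\chi_h)$ into the lower-order pairing $(e,\chi_h)_D$; and the last term is supported in the annulus $D\setminus\widehat D$ away from $\supp\psi$, so it is estimated by the continuity \eqref{eq:circleconti}, the super-approximation bound, and the interior $L^2$ estimate of Proposition \ref{prop:interiorl2} applied to both $e$ and $v_h$.

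The exponent condition $1/q<1/p+1/2$ is dictated by the regularity available for the dual solution. By Proposition \ref{prop:circleregulality} one has $v\in W^{2,p'}(D)$ when $p<\infty$ and $v\in W^{1,1}(D)$ when $p=\infty$, $p'=1$. Two-dimensional Sobolev embedding then places $v$ and $\nabla v$ in $L^{q'}(D)$ precisely when $1/q'\ge 1/p'-1/2$, i.e.\ $1/q\le 1/p+1/2$ (in the limiting case $p=\infty$ it is the embedding $W^{1,1}\hookrightarrow L^2\subset L^{q'}$ that is used, which is why $q>2$ is required there). This is exactly what is needed so that the lower-order pairing $(e,\chi_h)_D$ and the annular remainder, after inserting $v_h\approx v$ and using H\"older, are controlled by $\norm{e}_{L^q(D)}\norm{\psi}_{L^{p'}(D)}$. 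Since $q\ge 2$, the $L^2(D)$ quantities produced by Proposition \ref{prop:interiorl2} are in turn dominated by $\norm{e}_{L^q(D)}$ on the bounded disk; collecting the three terms and taking the supremum over $\psi$ yields \eqref{eq:lem3}.

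The main obstacle I anticipate is precisely this gain of integrability. The dual solution carries only $W^{2,p'}$ (respectively $W^{1,1}$) regularity, so every term must be arranged to pair the low regularity of $e$ (only $L^q$) against the dual data through a single Sobolev step, and the factor $\norm{\psi}_{L^2(D)}$ forced by Assumption \ref{asm1}, together with the extra power of $h$, must be reconciled with the $L^{p'}(D')$ normalisation of $\psi$ — the delicate point where I expect to localise the dual data to the $h$-scale (or pass to a discrete regularisation of $\psi$) and invoke the inverse estimate of Proposition \ref{prop:inverseineq}. Keeping the super-approximation cut-offs and the concentric disks organised so that the annular remainder genuinely decouples from $\supp\psi$, while not losing the $\alpha(h)$ weight on $\norm{u}_{V^\infty(D)}$, is the bookkeeping at the heart of the proof.
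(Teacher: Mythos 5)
Your proposal is a classical Aubin--Nitsche duality argument: test $e=u-u_h$ against an arbitrary $\psi\in C_0^\infty(D')$ normalised in $L^{p'}(D')$ and solve the dual problem with data $\psi$. This is genuinely different from the paper, which instead applies the Green operator to the error itself, setting $\psi=\mathcal{G}_D(u-u_h)$ and $\psi_h=\Pi_h^1(u-u_h)$; then $u-u_h-\psi_h$ is discrete harmonic for $a_D^1$, so Lemma \ref{thm1:coercive} controls it in $L^\infty(D')$ by $\norm{u}_{\alpha(h),D}+\norm{u-u_h}_{L^2(D)}$, and the whole gain of integrability is carried by the single correction term $\psi_h$: one proves $\norm{\psi_h}_{L^\infty(D')}\le C\norm{u-u_h}_{L^s(D)}$ ($s>2$) and $\norm{\psi_h}_{L^2(D')}\le C\norm{u-u_h}_{L^t(D)}$ ($1<t<2$) from the Newtonian-potential representation of $\nabla\psi$ and Young's convolution inequality, and interpolates by Riesz--Thorin; the constraint $\frac1q<\frac1p+\frac12$ arises from the admissible range of $(s,t)$, not from a Sobolev embedding of the dual solution.

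The difference matters because your route has a gap that you flag but do not close, and which I do not believe can be closed with the tools available in the paper. Every quantitative ingredient here is $L^2$-based: Assumption \ref{asm1} produces the factor $\norm{\psi}_{L^2(D)}$, and the only error estimate for $v-v_h=\mathcal{G}_D\psi-\Pi_h^1\psi$ is the energy estimate $\norm{v-v_h}_{V^2}\le Ch\norm{v}_{H^2}\le Ch\norm{\psi}_{L^2}$. With $\psi$ normalised only in $L^{p'}(D')$, $1\le p'\le 2$, the quantity $\norm{\psi}_{L^2(D)}$ is simply unbounded, so both your first term and the $v_h$-dependent pieces of your second and third terms are uncontrolled. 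The remedy you sketch --- localising the dual data to the $h$-scale and using inverse estimates --- is exactly the mechanism of Lemmas \ref{lem1} and \ref{lem2}, but it is available there only because the target is a single point value, so the dual data may be taken supported in a disk of radius $Mh$ and $\norm{\phi}_{L^1(D_h)}\le Ch\norm{\phi}_{L^2(D_h)}$; here the target is $\norm{e}_{L^p(D')}$ over a disk of fixed radius, the dual data must exhaust the unit ball of $L^{p'}(D')$, and no such localisation is possible. A discrete regularisation $P_h\psi$ also produces a commutation error $(e-P_he,\psi)_D$ that is not obviously admissible. In short, avoiding any pairing against sub-$L^2$ dual data is precisely why the paper chooses the data of the auxiliary problem to be $u-u_h$ (which lies in $L^2(D)$ since $q\ge2$), and the interpolation is performed on the operator $u-u_h\mapsto\psi_h$ rather than on the dual solution. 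I would recommend reworking your argument along those lines.
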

    
    \begin{proof}
    Setting $\psi = \mathcal{G}_D(u-u_h)$, $\psi_h = \Pi_h^1(u-u_h)$, we have 
    \[
        a_D^1(u-u_h-\psi_h,\chi) = 0 \quad{}^\forall \chi \in \mathring V_h(D).
    \]
    We apply Lemma \ref{thm1:coercive} to obtain
    \begin{equation*}
    \norm{u-u_h-\psi_h}_{L^\infty(D')} \le C\norm{u}_{\alpha(h),D} + C\norm{u-u_h}_{L^2(D)} + C\norm{\psi_h}_{L^2(D)}.
    \end{equation*}
On the other hand, using \eqref{eq:circlecoercive}, we have
    \begin{equation*}
    \norm{\psi_h}_{L^2(D)}^2 \le C a_D(\psi_h,\psi_h)= C (u-u_h,\psi_h)_{D} \le C\norm{u-u_h}_{L^2(D)}\norm{\psi_h}_{L^2(D)}
    \end{equation*}
and, therefore, 
    \begin{align}
    \norm{u-u_h}_{L^p(D')}& \le \norm{u-u_h-\psi_h}_{L^p(D')} + \norm{\psi_h}_{L^p(D')} \nonumber \\
     & \le C \norm{u}_{\alpha(h),D} + C\norm{u-u_h}_{L^2(D)} + \norm{\psi_h}_{L^p(D')}. \label{thm1coercive:second}
    \end{align}
    Because $\psi$ is smooth, we again apply Lemma \ref{thm1:coercive} to obtain
    \begin{align*}
    \norm{\psi_h}_{L^\infty(D')} & \le \norm{\psi-\psi_h}_{L^\infty(D')} + \norm{\psi}_{L^\infty(D')} \\
    & \le \norm{\psi}_{\alpha(h),D} + C\norm{\psi-\psi_h}_{L^2(D)} \\
    &\le C \norm{\psi}_{\alpha(h),D} \\
    &\le C\left(\norm{\psi}_{W^{1,\infty}(D)} + \max_{e \in \E_h,e\cap \overline D \ne \emptyset}\norm{\nabla \psi}_{L^\infty(e\cap\overline D)}\right).
    \end{align*}
    The Sobolev inequality and elliptic regularity give
    \[
        \norm{\psi}_{W^{1,\infty}(D)} \le C \norm{\psi}_{W^{2,s}(D)} \le C \norm{u-u_h}_{L^s(D)}
    \]
    for $s >2$.
    Because
    \[
        \abs{\nabla \psi (x)} \le C \int_D \frac{\abs{u(y)-u_h(y)}}{\abs{x-y}} dy \le C \norm{u-u_h}_{L^s(D)}
    \]
    for $x \in D$, we have
    \begin{equation}
    \norm{\psi_h}_{L^\infty(D')} \le C \norm{u-u_h}_{L^s(D)}. \label{thm1:linfty}
    \end{equation}
    Similarly, we deduce  
    \begin{align*}
    \norm{\psi_h}_{L^2(D')} &\le C \norm{\psi}_{V^2(D)} \\
    & \le C\norm{\psi}_{H^1(D)} + C \left( \sum_{e \in \E_h,e\cap \overline D \ne \emptyset} h_e\norm{\nabla \psi}_{L^2(e\cap\overline D)}^2\right)^{1/2}.
    \end{align*}
    Applying the Young inequality for convolution, we have
    \begin{align*}
        \abs{\nabla \psi(x)} &\le C \int_D\frac{\abs{u(y)-u_h(y)}}{\abs{x-y}} dy \\
        &\le C \norm{u-u_h}_{L^t(D)} \norm{\abs{x-y}^{-1}}_{L^{2t/(3t-2)}(D)} \\
        & \le C \norm{u-u_h}_{L^t(D)} .
    \end{align*}
for $x \in D$ and $1<t<2$. 
    Therefore, 
    \begin{align}
    \norm{\psi_h}_{L^2(D')} &\le C\norm{\psi}_{W^{2,t}(D)} + C \left( \sum_{e \in \E_h,e\cap \overline D \ne \emptyset}h_e^2\norm{u-u_h}_{L^t(D)}^2\right)^{1/2} \nonumber \\
    & \le C \norm{u-u_h}_{L^t(D)}. \label{thm1:l2}
    \end{align}
    In view of the Riesz--Thorin interpolation theorem, we have
    \begin{align}
    \norm{\psi_h}_{L^p(D')} \le C \norm{u-u_h}_{L^q(D)}
    \end{align}
for $\theta = \frac{2}{p}$, and $\frac{1}{q} = \frac{1-\theta}{s} + \frac{\theta}{t} < \frac{1}{2}+\frac{1}{p}$.  This, together with \eqref{thm1coercive:second}, implies \eqref{eq:lem3}.  
    \end{proof}

We can now state the following proof. 

    \begin{proof}[Proof of Theorem \ref{thm1}.]
        Letting $\chi\in V_h$ be arbitrary, we set $v=u-\chi$ and $v_h=u_h-\chi$.
        Similar to the proof of Lemma \ref{thm1:coercive}, we take $x_0 \in \Omega_0$, $D \Subset \Omega_1$, and $\omega \in C^\infty_0(\frac{1}{2}D)$.
        Setting $\tilde v = \omega v \in V_h(D)$, we define $\tilde v_h \in V_h(D)$ as
        \[
        a_D^1(\tilde v - \tilde v_h ,\xi) = 0 \quad {}^\forall \xi \in V_h(D).
        \]
        Then, Lemma \ref{lem1} gives
        \[
            \norm{\tilde v -\tilde v_h}_{L^\infty(\frac{1}{4}D)} \le C\norm{\tilde v}_{\alpha(h),D}.
        \]
        Setting $D' = \varepsilon D$ for $\varepsilon < \frac{1}{4}$, we let $\psi$ be the unique solution of
        \[
            \left\{ \begin{array}{ccc}
            -\Delta \psi = -(\tilde v_h - \tilde v) & \text{in} & D' \\
            \psi = 0 & \text{on} & \partial D'
            \end{array} \right..
        \]
        Then, we have
        \[
            a_D(\psi,w) = -(\tilde v_h - \tilde v,w)_{D'} \quad {}^\forall w \in \{w \in V^2(D') \colon w|_{\partial D'} = 0 \}
        \]
        and 
        \[
            a_D(\psi-(v_h-\tilde v_h),\xi) = 0 \quad {}^\forall \xi \in \mathring V_h(D').
        \]
        Applying Lemma \ref{lem3} several times, we obtain
        \begin{align*}
        \norm{\psi-(v_h-\tilde v_h)}_{L^\infty(\frac{1}{4}D')} & \le C \norm{\psi}_{\alpha(h),D'} + C \norm{\psi-(v_h-\tilde v_h)}_{L^2(D')} \\
         & \le C \norm{\psi}_{\alpha(h),D'} + C \norm{v-v_h}_{L^2(D')} + C \norm{\tilde v - \tilde v_h}_{L^2(D')} \\
         & \le C \norm{\psi}_{\alpha(h),D'} + C \norm{v-v_h}_{L^2(D')} + C\norm{\tilde v}_{\alpha(h),D}.
        \end{align*}
        Then, we deduce $\norm{\psi}_{\alpha(h),D'} \le C \norm{\tilde v -\tilde v_h}_{L^\infty(D')} \le C\norm{v}_{\alpha(h),D}$ in the similar way as the proof of Lemma \ref{lem3}.  
        Using the triangle inequality, we have
        \begin{align*}
            \norm{v_h-\tilde v_h}_{L^\infty(\frac14D')}\le C\norm{v}_{\alpha(h),D} + C\norm{v-v_h}_{L^2(D)}.
        \end{align*}
        Therefore, 
        \begin{align*}
            \norm{u-u_h}_{L^\infty(\Omega_0)} &\le \abs{\tilde v(x_0)-\tilde v_h(x_0)} + \abs{\tilde v_h(x_0) - v_h(x_0)} \\
            & \le C\norm{v}_{\alpha(h),D} + C\norm{v-v_h}_{L^2(D)} \\
            &\le C\norm{u-\chi}_{\alpha(h),\Omega_1} + C\norm{u-u_h}_{L^2(\Omega_1)}.
        \end{align*}
 \end{proof}
        
\begin{cor}\label{cor1}
        Under the same assumption of Theorem \ref{thm1}, we further assume that $d(\Omega_1,\Omega) \ge \kappa h$ and $u \in W^{1+r,\infty}(\Omega)$.
        Then, there exists a positive constant $C$ independent of $h$, $u$, $u_h$, and $D$ satisfying
        \begin{equation}
        \norm{u-u_h}_{L^\infty(\Omega_0)} \le C h^r\left[h+\alpha\left(\frac{h}{d}\right)\right]\norm{u}_{W^{1+r,\infty}(\Omega_1)} + Cd^{-1}\norm{u-u_h}_{L^2(\Omega_1)} \label{eq:cor1}
        \end{equation}
        for a sufficiently small $h$.
\end{cor}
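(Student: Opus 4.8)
The plan is to deduce \eqref{eq:cor1} from Theorem \ref{thm1} by a dilation argument that normalizes the separation $d=d(\Omega_0,\Omega_1)$ to unit size, which is the natural way to make both the $\alpha(h/d)$ and the $d^{-1}$ appear. Introduce the rescaled variable $\hat x = x/d$, set $\hat\Omega_i=\{x/d:x\in\Omega_i\}$, $\hat u(\hat x)=u(d\hat x)$, $\hat u_h(\hat x)=u_h(d\hat x)$, and carry $\T_h$ to a triangulation $\widehat\T$ of $\hat\Omega$ with granularity $\hat h=h/d$. The structural fact I would check first is that the DG bilinear form $a$ in \eqref{eq:bilinear} is invariant under this dilation: the volume term is invariant because $\nabla_x=d^{-1}\nabla_{\hat x}$ and $dx=d^2\,d\hat x$ cancel, and the consistency and penalty terms are invariant using $h_e\mapsto h_e/d$ and $ds\mapsto d\,ds$. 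Hence the orthogonality \eqref{eq:go} is preserved, with $\mathring V_h$ sent to its rescaled counterpart. Since $\dist(\hat\Omega_0,\partial\hat\Omega_1)=1$, applying Theorem \ref{thm1} on the rescaled configuration (effective parameter $\hat h=h/d$) gives
\[
\norm{\hat u - \hat u_h}_{L^\infty(\hat\Omega_0)} \le C\Bigl(\inf_{\hat\chi}\norm{\hat u - \hat\chi}_{\alpha(h/d),\hat\Omega_1} + \norm{\hat u - \hat u_h}_{L^2(\hat\Omega_1)}\Bigr).
\]

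Next I would transform each norm back, tracking the dilation weights. As $L^\infty$ is scale-invariant, $\norm{\hat u - \hat u_h}_{L^\infty(\hat\Omega_0)}=\norm{u-u_h}_{L^\infty(\Omega_0)}$, the desired left-hand side; and in two dimensions $\norm{\hat u - \hat u_h}_{L^2(\hat\Omega_1)}=d^{-1}\norm{u-u_h}_{L^2(\Omega_1)}$, which is exactly the second term of \eqref{eq:cor1}. For the best-approximation term I would take $\hat\chi$ to be a continuous quasi-interpolant of $\hat u$ (which belongs to $V_h$ and obeys the standard elementwise bounds): the hypothesis $d(\Omega_1,\Omega)\ge\kappa h$ ensures that no edge of $\partial\Omega$ meets $\overline{\hat\Omega_1}$, so every jump $\jump{\hat u-\hat\chi}$ entering $\norm{\cdot}_{V^\infty(\hat\Omega_1)}$ vanishes and the mean-flux terms are controlled by the broken $W^{1,\infty}$ seminorm. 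Proposition \ref{prop:bestapprx} then gives $\norm{\hat u-\hat\chi}_{L^\infty(\hat\Omega_1)}\le C\hat h^{1+r}|\hat u|_{W^{1+r,\infty}(\hat\Omega_1)}$ and $\norm{\hat u-\hat\chi}_{V^\infty(\hat\Omega_1)}\le C\hat h^{r}|\hat u|_{W^{1+r,\infty}(\hat\Omega_1)}$, so that
\[
\inf_{\hat\chi}\norm{\hat u - \hat\chi}_{\alpha(h/d),\hat\Omega_1} \le C\hat h^r\bigl[\hat h + \alpha(h/d)\bigr]\,|\hat u|_{W^{1+r,\infty}(\hat\Omega_1)}.
\]

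I would then undo the scaling in this last bound. Using $\hat h=h/d$ and $|\hat u|_{W^{1+r,\infty}(\hat\Omega_1)}=d^{1+r}|u|_{W^{1+r,\infty}(\Omega_1)}$ (each top-order derivative carries a factor $d$), and computing $(h/d)^r d^{1+r}=h^r d$, the right-hand side becomes $Ch^r\bigl[h + d\,\alpha(h/d)\bigr]|u|_{W^{1+r,\infty}(\Omega_1)}$; since $d\le\diam\Omega$ is bounded, $d\,\alpha(h/d)$ is absorbed into $C\alpha(h/d)$, yielding the first term of \eqref{eq:cor1} (with the seminorm, hence a fortiori with $\norm{u}_{W^{1+r,\infty}(\Omega_1)}$). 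Collecting the three pieces proves \eqref{eq:cor1}.

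I expect the main obstacle to be the legitimacy of applying Theorem \ref{thm1} after dilation. That theorem, and the underlying Assumption \ref{asm1}, are stated for a sufficiently small mesh parameter, whereas the rescaled parameter is $\hat h=h/d$, which is small only when $d\gg h$; so the argument above is genuinely a refinement in the regime $d/h\to\infty$. The complementary regime $d\asymp h$ must be dispatched separately, where $d^{-1}\asymp h^{-1}$ makes the second term of \eqref{eq:cor1} dominant and the estimate follows directly from Theorem \ref{thm1} (combined with the inverse inequality of Proposition \ref{prop:inverseineq}). A secondary point requiring care is verifying that $\widehat\T$ inherits shape-regularity and quasi-uniformity with the same constant $C_*$, and that Assumption \ref{asm1} transfers to it, so that the constant $C$ obtained from the rescaled application of Theorem \ref{thm1} is independent of $d$.
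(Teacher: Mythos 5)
The paper states Corollary \ref{cor1} without any proof, so there is nothing of the authors' to compare your argument against; I can only judge the proposal on its own merits. Your dilation argument is the standard way to extract such $d$-weighted local estimates from a unit-separation interior estimate, and the bookkeeping is correct: the two-dimensional form \eqref{eq:bilinear} is indeed invariant under $x\mapsto x/d$ once $h_e\mapsto h_e/d$, the $L^\infty$ norm is scale-invariant, the $L^2$ norm contributes the factor $d^{-1}$, and $(h/d)^r d^{1+r}=h^r d$ converts the rescaled interpolation bound into the first term of \eqref{eq:cor1}; your observation that the extra hypothesis $d(\Omega_1,\Omega)\ge \kappa h$ is what removes the boundary-edge jump contributions from $\norm{\cdot}_{V^\infty(\Omega_1)}$ is also the right reading of that assumption. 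The two issues you flag at the end are precisely where the real work lies, and you leave them open: (i) Theorem \ref{thm1} and Assumption \ref{asm1} must hold, with constants depending only on $C_*$, $\sigma_0$, $r$ and the disk radius, uniformly over the dilated mesh families $\{\T_h/d\}$ and over disks of unit-order size --- plausible, since dilation preserves shape regularity, quasi-uniformity and the penalty criterion, but this uniformity is asserted rather than verified; and (ii) in the complementary regime $d\le Mh$, applying Theorem \ref{thm1} directly yields $\alpha(h)$ where \eqref{eq:cor1} demands $\alpha(h/d)$, and since Assumption \ref{asm1} constrains $\alpha$ only near zero, the inequality $\alpha(h)\le C\bigl[h+\alpha(h/d)\bigr]$ is not automatic for an arbitrary positive $\alpha$ (it is harmless for the realistic choices $\alpha\equiv 1$, powers or logarithms, and irrelevant for the paper's only use of the corollary, namely the case $u\equiv 0$ in the proof of Theorem \ref{thm2}). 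In short: right approach, correct scaling computations, complete modulo the uniformity-under-rescaling of the constants, which you correctly identify but do not establish.
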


\section{Weak discrete maximum principle (Proof of Theorem \ref{thm2})}
\label{s:II}

We follow the same method as the proof of Theorem 1 of \cite{MR551291} to prove Theorem \ref{thm2} described below. 

\begin{proof}[Proof of Theorem \ref{thm2}.]
    Let $x_0 \in \Omega$ satisfy $\abs{u_h(x_0)} = \norm{u_h}_{L^\infty(\Omega)}$.
    Set $d = \dist(x,\partial \Omega)$. First, we consider the case $d \ge 2\kappa h$ for some $\kappa>1$. 
    Then, applying Corollary \ref{cor1} to an open disk with center $x_0$ and $u\equiv 0$, we have 
    \[
        \abs{u_h(x_0)} \le C d^{-1}\norm{u_h}_{L^2(S_{\frac{1}{2}d}(x_0))} \le C d^{-1}\norm{u_h}_{L^2(S_{d}(x_0))}.
    \]
    Now we assume that $d < 2\kappa h$. Using the inverse inequality, we have
    \[
        \abs{u_h(x_0)} \le Ch^{-1}\norm{u_h}_{L^2(S_{h}(x_0))}.
    \]
    Therefore,  
    \begin{equation}
        \norm{u_h}_{L^\infty(\Omega)} \le C \rho^{-1}\norm{u_h}_{L^2(S_\rho(x_0))} \label{eq:thm2step1}
    \end{equation}
    where $\rho = \max\{d,h\}$.

    Let $\phi \in C^\infty_0(S_\rho(x_0))$ satisfy $\norm{\phi}_{L^2(S_\rho(x_0))}=1$.
    Let $v \in H^1_0(\Omega)$ be the solution of \eqref{eq:poisson} with $f=\phi$ and $g=0$.
    Then, $v \in W^{2,p}(\Omega)$ for some $4/3< p\le 2 $, and $a(v,w) = (\phi,w)_\Omega$ for all $w \in V^2$. 
    Let $v_h \in \mathring V_h$ be satisfy
    \[
        a(v_h,\chi) = (\phi,\chi) \quad {}^\forall \chi \in \mathring V_h.
    \]
    In view of the assumption of $u_h$, we get
    \begin{align}
    \abs{(u_h,\phi)_\Omega} &= \abs{a(u_h,v)} = \abs{a(u_h,v-v_h)} \nonumber \\
    & = \abs{a(u_h-\chi,v-v_h)} \label{eq:thm2step2first}
    \end{align}
    for $\chi \in \mathring V_h$. 
    We define $\hat u_h \in \mathring V_h$ such that 
$\hat u_h = 0$ at nodal points on $\partial \Omega$ and $\hat u_h = u_h$ at interior nodal points. Then, we have  
    \begin{align*}
        \supp(u_h - \hat u_h) & \subset \Lambda_h = \{x \in \overline \Omega \colon \dist(x,\partial \Omega) \le h \},\\
    \norm{u_h - \hat u_h}_{L^\infty(\Omega)} & \le C \norm{u_h}_{L^\infty(\partial \Omega)}.
    \end{align*}
    Substituting \eqref{eq:thm2step2first} for $\chi = \hat u_h$, and using the inverse inequality, we have 
    \begin{align}
    \abs{(u_h,\phi)_\Omega} &\le C \norm{u_h-\hat u_h}_{V^\infty} \norm{v-v_h}_{V^1(\Lambda_h)} \nonumber \\
     & \le Ch^{-1}\norm{u_h - \hat u_h}_{L^\infty(\Omega)}\norm{v-v_h}_{V^1(\Lambda_h)} \nonumber \\
     & \le Ch^{-1}\norm{u_h}_{L^\infty(\partial \Omega)}\norm{v-v_h}_{V^1(\Lambda_h)}. \label{eq:thm2step2last} 
    \end{align}

    Set $R_0 = \diam\Omega$ and $d_j = R_02^{-j}$ for non-negative integer $j$.
    We define $A_j$ as 
    \[
        A_j \coloneqq \{x \in \overline \Omega \colon d_{j+1} \le \abs{x-x_0} \le d_j \}.
    \]
    Then, $\abs{A_j \cap \Lambda_h}_2 \le C d_jh$.
    Set $A_j^l =\displaystyle  \bigcup_{k=j-l}^{j+l}A_k$ and $J \coloneqq \min \{j \in {\mathbb Z} \colon d_{j+1} \le 8 \rho\}$.
    Then, we have
    \begin{align}
    \norm{v-v_h}_{V^1(\Lambda_h)} & \le \sum_{j=0}^J \norm{v-v_h}_{V^1(\Lambda_h \cap A_j)} + \norm{v-v_h}_{V^1(\Lambda_h \cap S_{8\rho})} \nonumber \\
     & \le C\sum_{j=0}^J h^{1/2}d_j^{1/2}\norm{v-v_h}_{V^2(\Lambda_h\cap A_j)} + C\rho^{1/2}h^{1/2} \norm{v-v_h}_{V^2(\Lambda_h \cap S_{8\rho}(x_0))}. \label{eq:thm2step3first}
    \end{align}
    To estimate the second term of \eqref{eq:thm2step3first}, we apply Propositions \ref{prop:energyerror} and \ref{prop:bestapprx} and get
    \begin{align}
    \norm{v-v_h}_{V^2(\Lambda_h \cap S_{8\rho}(x_0))} &\le Ch^{2-\frac{2}{p}}\norm{v}_{W^{2,p}(\Omega)} \nonumber \\
    & \le Ch^{2-\frac{2}{p}}\norm{\phi}_{L^p(S_\rho(x_0))} \nonumber \\
    & \le Ch^{2-\frac{2}{p}}\rho^{\frac{2}{p}-1} \label{eq:thm2step3circle}.
    \end{align}
    Meanwhile, using Proposition \ref{prop:interiorl2} for $j$ satisfying $\Lambda_h \cap A_j \ne \emptyset$, we have
    \begin{align*}
    \norm{v-v_h}_{V^2(\Lambda_h \cap A_j)} &\le C\left( \norm{v-v_h}_{V^2(A_j^1)} + d_j^{-1}\norm{v-v_h}_{L^2(A_j^1)}  \right) \nonumber \\
     & \le Ch^{2-\frac{2}{p}}\norm{v}_{W^{2,p}(A_j^2)} + C d_j^{-1} \norm{v-v_h}_{L^2(A_j^1)}.
    \end{align*}
    In view of Proposition \ref{prop:poisson4}, 
    \begin{align*}
    \norm{v}_{W^{2,p}(A_j^2)} &\le C\left( \norm{\phi}_{L^p(A_j^3)} + d_j^{-1}\abs{v}_{W^{1,p}(A_j^3)} + d_j^{-2}\norm{v}_{L^p(A_j^3)} \right) \\
    & \le C d_j^{\frac{2}{p}-1}\left(1 + d_j^{-1}\abs{v}_{H^1(A_j^4)} + d_j^{-2}\norm{v}_{L^2(A_j^4)} \right).
    \end{align*}
    Because $\diam A_j^4 \le 32 d_j$ and $\dist(A_j^4,\partial \Omega) \le h$, there exists $\overline x_j \in \partial \Omega$ satisfying
    \[
        A_j^4 \subset S_{64d_j}(\overline x_j)\,,\,S_\rho(x_0) \subset S_{64d_j}(\overline x_j).
    \]
    Therefore, we have
    \begin{align}
     \norm{v}_{W^{2,p}(A_j^2)} \le C d_j^{\frac{2}{p}-1} \label{eq:thm2step3w2p}
    \end{align}
    by Propositions \ref{prop:poisson3} and \ref{prop:poincare}.
    Let $\eta \in C^\infty_0(S_{64d_j}(\overline x_j))$ and let $w \in H^1_0(\Omega)$ be the solution of \eqref{eq:poisson} with $f=\eta$ and $g=0$.
    Let $w_h \in \mathring V_h$ satisfy
    \[
        a(w_h,\chi) = (\eta,\chi)_{\Omega} \quad {}^\forall \chi \in \mathring V_h.
    \]
    Similar to the above, we deduce 
    \begin{align*}
    \abs{(v-v_h,\eta)_{S_{64d_j}(\overline x_j)}} & = \abs{a(v-v_h,w-w_h)} \nonumber \\
     & \le C \norm{v-v_h}_{V^2}\norm{w-w_h}_{V^2} \nonumber \\
     & \le Ch^{4-\frac{4}{p}}\norm{\phi}_{L^p(S_\rho(x_0))}\norm{\eta}_{L^p(S_{64d_j}(\overline x_j))} \nonumber \\
     & \le Ch^{4-\frac{4}{p}}(\rho d_j)^{\frac{2}{p}-1}\norm{\eta}_{L^2(S_{64d_j}(\overline x_j))}.
    \end{align*}
    Therefore,
    \begin{align}
    \norm{v-v_h}_{L^2(A_j^1)} \le Ch^{4-\frac{4}{p}}(\rho d_j)^{\frac{2}{p}-1}. \label{eq:thm2step3l2}
    \end{align}
    Summing up \eqref{eq:thm2step3first}--\eqref{eq:thm2step3l2} and using $h \le \rho \le C d_j \le R_0$ and $p>\frac{3}{4}$, we obtain
    \begin{align}
    \norm{v-v_h}_{V^1(\Lambda_h)} & \le \sum_{j=0}^J h^{1/2}d_j^{1/2}\left(h^{2-\frac{2}{p}}d_j^{\frac{2}{p}-1} + h^{4-\frac{4}{p}}\rho^{\frac{2}{p}-1}d_j^{\frac{2}{p}-1} \right)
      + h\rho \left( \frac{h}{\rho} \right)^{\frac{3}{2}-\frac{2}{p}} \nonumber \\
    & \le Ch\rho. \label{eq;thm2step3last}
    \end{align}
    The desired \eqref{eq:dmp} now follows \eqref{eq:thm2step2last} and \eqref{eq:thm2step1}. 
\end{proof}

\section{$L^\infty$ error estimate (Proof of Theorem \ref{thm3})}
\label{s:III}

We finally state the following proof. 

\begin{proof}[Proof of Theorem \ref{thm3}.]
    Let $\tilde u \in V^\infty(\widetilde \Omega)$ be the extension of $u$ satisfying $\norm{\tilde u}_{\alpha(h), \widetilde \Omega} \le C \norm{u}_{\alpha(h),\Omega}$ and $\tilde u = 0$ on $\partial \widetilde \Omega$.
    Moreover, let $\tilde u_h \in \mathring V_h(\widetilde \Omega)$ solve 
    \[
        \tilde a(\tilde u - \tilde u_h ,\xi) = 0 \quad {}^\forall \xi \in \mathring V_h(\widetilde \Omega),
    \]
    where $\tilde a$ is the bilinear form \eqref{eq:bilinear} with replacement of $\T_h$, $\E_h$ by $\widetilde \T_h$, $\widetilde \E_h$, respectively.
    For arbitrary $\chi \in V_h$, we define $\tilde \chi \in \mathring V_h(\widetilde \Omega)$ as a zero extension.
    Then, in view of Theorem \ref{thm1}, we have
    \begin{equation}
    \norm{\tilde u -\tilde u_h}_{L^\infty(\Omega)} \le C \norm{\tilde u- \tilde \chi}_{\alpha(h),\widetilde \Omega} + C \norm{\tilde u - \tilde u_h}_{L^2(\widetilde \Omega)}. \label{eq:thm3step1}
    \end{equation}
    
    Let $\psi \in H^1(\widetilde \Omega)$ be the solution of
    \begin{equation}
    \left\{
    \begin{array}{ccc}
    -\Delta \psi = \tilde u - \tilde u_h & \text{in} & \widetilde \Omega \\
    \psi = 0 & \text{on} & \partial \widetilde \Omega .
    \end{array}\right.
    \end{equation}
    Then, $\psi \in H^2(\widetilde \Omega)$ and $\tilde a (\psi,\eta) = (\psi,\eta)_{\widetilde \Omega}$ for $\eta \in V^2(\widetilde \Omega)$.
    Let $\psi_h \in \mathring V_h(\widetilde \Omega)$ solve
    \[
        \tilde a(\psi-\psi_h,\xi) = 0 \quad {}^\forall \xi \in \mathring V_h(\widetilde \Omega).
    \]
    Then, by the continuity of $\tilde a$ and elliptic regularity, we have
    \begin{align}
    \norm{\tilde u -\tilde u_h}_{L^2(\widetilde \Omega)}^2 &= \tilde a(\tilde u-\tilde u_h ,\psi) = \tilde a(\tilde u-\tilde \chi,\psi-\psi_h) \nonumber \\
     & \le C\norm{\tilde u-\tilde \chi}_{V^\infty(\widetilde \Omega)}\norm{\psi - \psi_h}_{V^1(\widetilde \Omega)} \nonumber \\
     & \le Ch \norm{\tilde u-\tilde \chi}_{V^\infty(\widetilde \Omega)}\norm{\tilde u -\tilde u_h}_{L^2(\widetilde \Omega)}. \label{eq:thm3step2first}
    \end{align}
    Because $a(u_h - \tilde u_h,\xi) = 0$ for $\xi \in \mathring V_h$, using Theorems \ref{thm2} and \ref{thm1} and \eqref{eq:thm3step2first}, we deduce 
    \begin{align}
    \norm{u_h - \tilde u_h}_{L^\infty(\Omega)} & \le C\norm{u_h-\tilde u_h}_{L^\infty(\partial \Omega)} \nonumber \\
     & \le C\norm{\tilde u - \tilde u_h}_{L^\infty(\partial \Omega)} + C\norm{u- u_h}_{L^\infty(\partial \Omega)} \nonumber \\
     & \le C\norm{\tilde u-\tilde \chi}_{\alpha(h),\widetilde \Omega} + C\norm{\tilde u -\tilde u_h}_{L^2(\widetilde \Omega)} + C\norm{u- u_h}_{L^\infty(\partial \Omega)} \nonumber \\
     & \le C\norm{\tilde u-\tilde \chi}_{\alpha(h),\widetilde \Omega} + C\norm{u- u_h}_{L^\infty(\partial \Omega)}. \label{eq:thm3step3}
    \end{align}
    
    Therefore, using triangle inequality, we obtain
\[
 \norm{u-u_h}_{L^\infty(\Omega)} \le C \norm{u-\chi}_{\alpha(h),\Omega} + C\norm{u-u_h}_{L^\infty(\partial \Omega)}.
\]
\end{proof}

\begin{cor}\label{cor2}
    In addition to the assumption of Theorem  \ref{thm3}, we assume $u \in W^{1+r,\infty}(\Omega)$.
    Then, we have
    \begin{equation}
    \norm{u-u_h}_{L^\infty(\Omega)} \le 
C h^{r}\norm{u}_{W^{1+r,\infty}(\Omega)}.\label{eq:cor2}
    \end{equation}
    \end{cor}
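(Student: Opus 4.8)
The plan is to invoke Theorem \ref{thm3} and then bound each of the two terms on its right-hand side of \eqref{eq:thm3} separately by $Ch^{r}\norm{u}_{W^{1+r,\infty}(\Omega)}$; the loss of one power of $h$ in the second term is exactly the advertised source of sub-optimality.

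For the approximation term $\inf_{\chi\in V_h}\norm{u-\chi}_{\alpha(h),\Omega}$, I would take $\chi$ to be a standard continuous (Lagrange or Scott--Zhang type) interpolant of $u$ lying in the continuous subspace of $V_h$, so that $\jump{u-\chi}$ vanishes on every interior edge and survives only on $\Eb_h$. Since $u\in W^{1+r,\infty}(\Omega)$, elementwise interpolation estimates (in the spirit of Proposition \ref{prop:bestapprx}) give $\norm{u-\chi}_{L^\infty(\Omega)}\le Ch^{r+1}\norm{u}_{W^{1+r,\infty}(\Omega)}$ and, for the three contributions to $\norm{\cdot}_{V^\infty(\Omega)}$ (the elementwise $W^{1,\infty}$ norm, the $h_e^{-1}$-scaled jumps on $\Eb_h$, and the edge means of the gradient), $\norm{u-\chi}_{V^\infty(\Omega)}\le Ch^{r}\norm{u}_{W^{1+r,\infty}(\Omega)}$. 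Recalling $\norm{v}_{\alpha(h),\Omega}=\norm{v}_{L^\infty(\Omega)}+\alpha(h)\norm{v}_{V^\infty(\Omega)}$ and that $\alpha$ is bounded near $0$ by Assumption \ref{asm1}, these combine to $\inf_{\chi\in V_h}\norm{u-\chi}_{\alpha(h),\Omega}\le Ch^{r}\norm{u}_{W^{1+r,\infty}(\Omega)}$.

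For the boundary term $\norm{u-u_h}_{L^\infty(\partial\Omega)}$, I would localize to a single boundary edge $e\subset\partial K$, insert the interpolant $\chi$, and write $\norm{u-u_h}_{L^\infty(e)}\le\norm{u-\chi}_{L^\infty(e)}+\norm{\chi-u_h}_{L^\infty(e)}$. The first term is $O(h^{r+1})$ as above. For the second, since $\chi-u_h$ is a polynomial on $K$, the standard $L^2$--$L^\infty$ inverse inequality on $K$ (cf.\ Proposition \ref{prop:inverseineq}) yields $\norm{\chi-u_h}_{L^\infty(K)}\le Ch^{-1}\norm{\chi-u_h}_{L^2(K)}\le Ch^{-1}\bigl(\norm{u-\chi}_{L^2(K)}+\norm{u-u_h}_{L^2(K)}\bigr)$. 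Crudely bounding $\norm{u-u_h}_{L^2(K)}\le\norm{u-u_h}_{L^2(\Omega)}$ and applying the global $L^2$ estimate \eqref{eq:energyestimate1} of Proposition \ref{prop:energyerror} (with $W^{1+r,\infty}(\Omega)\hookrightarrow H^{1+r}(\Omega)$ on the bounded domain), the prefactor $h^{-1}$ turns $h^{r+1}$ into $h^{r}$, so that $\norm{u-u_h}_{L^\infty(\partial\Omega)}\le Ch^{r}\norm{u}_{W^{1+r,\infty}(\Omega)}$. Substituting both bounds into \eqref{eq:thm3} then gives \eqref{eq:cor2}.

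I expect the boundary term to be the only genuine obstacle: it forces an inverse estimate on boundary elements, and the resulting factor $h^{-1}$ is precisely what degrades the rate from the optimal $h^{r+1}$ to $h^{r}$. One must also be careful that $u_h$ here is the genuine DG solution, so that the full Galerkin orthogonality \eqref{eq:galerkin}, and hence the global $L^2$ estimate, is available; under the weaker hypothesis \eqref{eq:go} alone the boundary values of $u_h$ would not be controlled and no such rate could be expected.
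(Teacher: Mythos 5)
Your treatment of the approximation term coincides with the paper's: both of you reduce $\inf_{\chi\in V_h}\norm{u-\chi}_{\alpha(h),\Omega}$ to standard interpolation estimates together with the boundedness of $\alpha$ near $0$, yielding the bound $Ch^r(h+\alpha(h))\norm{u}_{W^{1+r,\infty}(\Omega)}$. For the boundary term, however, you take a genuinely different route. The paper stays entirely within the energy framework: it bounds $\norm{u_h-\chi}_{L^\infty(e)}\le Ch_e^{-1/2}\norm{u_h-\chi}_{L^2(e)}$ by the trace inverse inequality on the edge, observes that $h_e^{-1/2}\norm{u_h-\chi}_{L^2(e)}\le C\norm{u_h-\chi}_{V^2}$ because for $e\in\Eb_h$ the jump is the trace itself, and then uses only \eqref{eq:dgconti}, \eqref{eq:dgcoercive} and \eqref{eq:galerkin} to get $\norm{u_h-\chi}_{V^2}\le C\norm{u-\chi}_{V^2}\le Ch^r\norm{u}_{W^{1+r,\infty}(\Omega)}$; the lost power of $h$ comes from the energy-norm approximation order. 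You instead pass through the volume inverse inequality $\norm{\chi-u_h}_{L^\infty(K)}\le Ch^{-1}\norm{\chi-u_h}_{L^2(K)}$ and the global duality estimate \eqref{eq:energyestimate1}, so your lost power of $h$ comes from the inverse inequality. Formally this closes the proof, but it is weaker in a way that matters here: \eqref{eq:energyestimate1} is an Aubin--Nitsche result whose standard proof requires $H^2$ regularity of the adjoint problem, i.e.\ convexity of $\Omega$, whereas the corollary is meant to cover non-convex polygons (such as the L-shaped domain of Section \ref{s:ne}), where the optimal $L^2$ rate genuinely degrades. The paper's energy-norm route needs no elliptic regularity at all, which is precisely what it buys. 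Your closing observation that the full orthogonality \eqref{eq:galerkin}, not merely \eqref{eq:go}, is needed to control the boundary values is correct and is implicit in the paper's proof as well.
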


    \begin{proof}
First, in view of the standard interpolation error estimate, we have 
\[
 \inf_{\chi \in V_h}\norm{u-\chi}_{\alpha(h),\Omega} \le C h^r(h+\alpha(h))\norm{u}_{W^{1+r,\infty}(\Omega)}. 
\]
To perform the estimation for $\|u-u_h\|_{L^\infty(\partial\Omega)}$, 
we let $e \in \Eb_h$ and $K \in \T_h$ such that $e \subset \overline K$. 
Moreover, let $\chi\in V_h$ be arbitrary. By the inverse inequality, we have 
\begin{align*}
\norm{u-u_h}_{L^\infty(e)} & \le 
 \norm{u-\chi}_{L^\infty(e)} + \norm{u_h-\chi}_{L^\infty(e)} \\
&\le \norm{u-\chi}_{L^\infty(K)} + Ch_e^{-1/2}\norm{u_h-\chi}_{L^2(e)} .
\end{align*}
Using \eqref{eq:dgconti}, \eqref{eq:dgcoercive}, and \eqref{eq:galerkin}, we have 
$\|\chi-u_h\|_{V^2}\le C\|\chi-u\|_{V^2}$ and, consequently,  
\[
 h_e^{-1/2} \norm{\chi-u_h}_{L^2(e)}\le C\|\chi-u\|_{V^2}. 
\]
Therefore, 
\[
 \norm{u-u_h}_{L^\infty(e)}\le \norm{u-\chi}_{L^\infty(K)}+C\|\chi-u\|_{V^2}.
\]
Choosing $\chi$ as the Lagrange interpolation of $u$, we deduce 
\[
 \norm{u-u_h}_{L^\infty(e)}\le Ch^{r}(h+\alpha(h))|u|_{W^{r+1,\infty}(K)}+Ch^r|u|_{W^{r+1,\infty}(\Omega)}. 
\]
Summing up those estimate, we obtain the desired \eqref{eq:cor2}. 
    \end{proof}

\section{Numerical examples}
\label{s:ne}

In this section, we examine the weak discrete maximum principle (Theorem  \ref{thm2}) and the $L^\infty$ error estimate (Corollary \ref{cor2}) using numerical examples. 
We consider the square domain $\Omega$ (see Fig. \ref{fig:squaremesh}) and the L-shape domain $\Omega$ (see Fig. \ref{fig:lshapemesh}). 


\begin{figure}[bt]
    \centering
       \subfloat[][Square domain]{\includegraphics[scale=0.7]{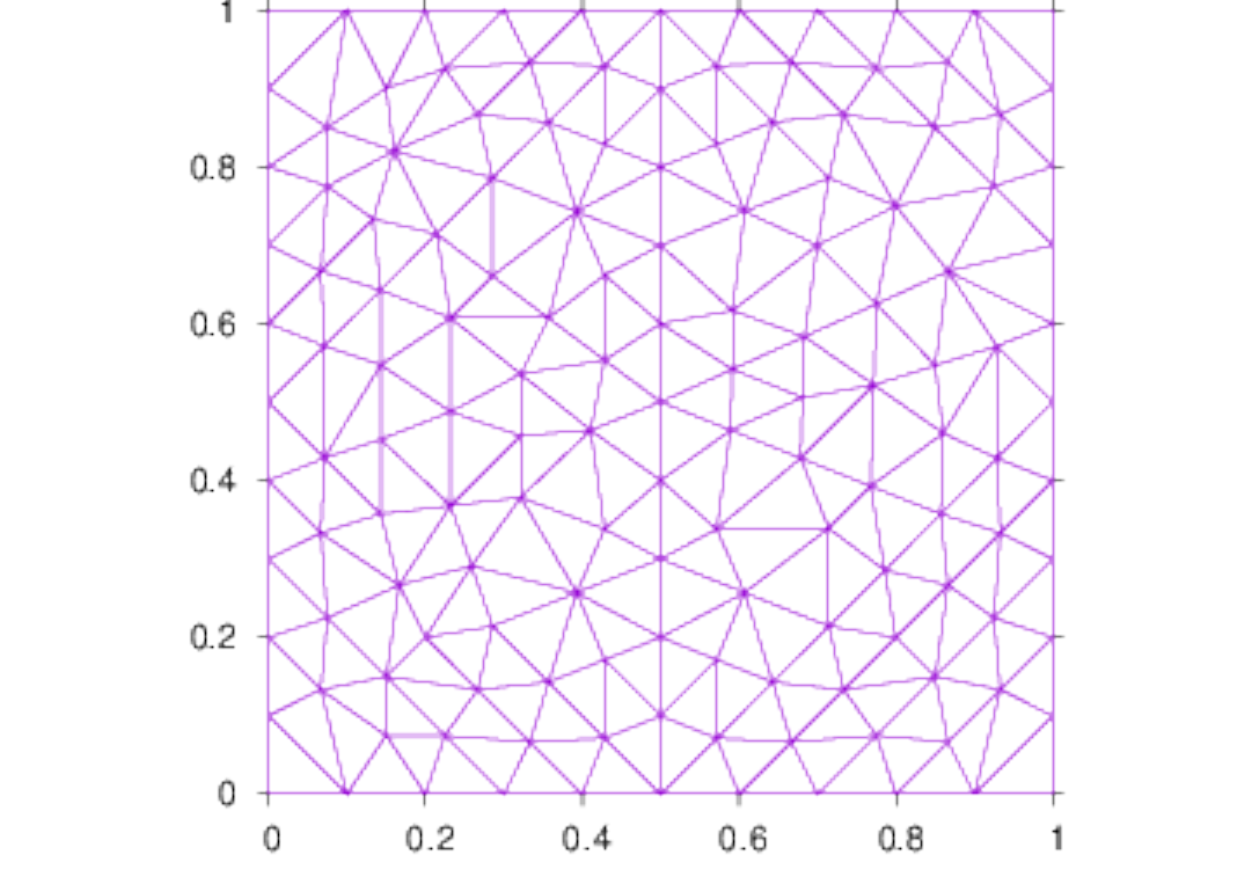}\label{fig:squaremesh}}
       \subfloat[][L-shape domain]{\includegraphics[scale=0.7]{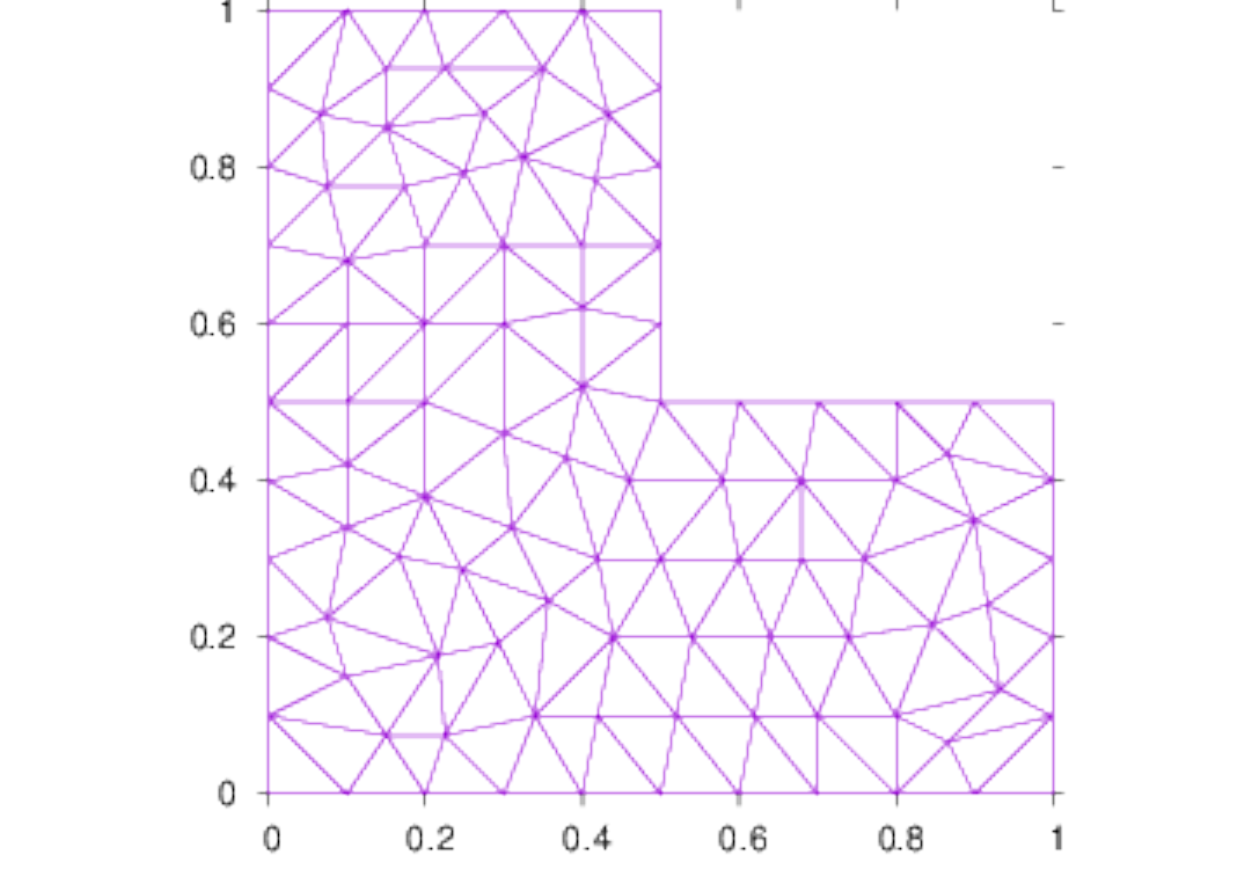}\label{fig:lshapemesh}}
    \caption{Domain $\Omega$}
\end{figure}

First, we solve (DG;$f,g$) with   
$f = 0$ and $g = \cos(\pi x) \cos (\pi y)$; the solution $u_h$ satisfies \eqref{eq:discreteharmonic}.
The minimum and maximum values of $u_h$ on $\Omega$ and $\partial \Omega$ are reported in Tab.~\ref{tb:wdmp}. We see from Tab.~\ref{tb:wdmp} that the minimum and maximum values on $\Omega$ agree with those on $\partial \Omega$. We infer that the discrete maximum principle \eqref{eq:dmp} actually holds with $C=1$.

\begin{table}[bt]
\caption{Minimum and Maximum on $\Omega$ or $\partial \Omega$}
\centering
\begin{tabular}{lc|cc|cc}\hline
Domain & $h$ & $\displaystyle \min_{\Omega}u_h$ & $\displaystyle \min_{\partial \Omega}u_h$ & $\displaystyle \max_{\Omega}u_h$ & $\displaystyle \max_{\partial \Omega}u_h$ \\ \hline\hline
\multirow{2}{*}{Square}  & $0.152069063$ & $-1.01415829$ & $-1.01415829$ & $1.01407799$ & $1.01407799$ \\ \cline{2-6} 
& $0.0762297934$ & $-1.00438815$&$-1.00438815$&$1.00437510$&$1.00437510$ \\ \hline
\multirow{2}{*}{L-shape}  & $0.152069063$ & $-1.01406865$ & $-1.01406865$ & $1.01414407$ & $1.01414407$ \\ \cline{2-6} 
& $0.0790226728$ & $-1.00437424$&$-1.00437424$&$1.00437503$&$1.00437503$ \\ \hline
\end{tabular}
\label{tb:wdmp}
\end{table}

Finally, we consider (BVP;$f,g$)  with $f(x,y)=2\pi^2\sin(\pi x)\sin(\pi y)$ and $g(x,y)=\sin(\pi x)\sin(\pi y)$. The exact solution is given as $u(x,y) = \sin(\pi x)\sin(\pi y)$. 
We examine errors $\|u-u_h\|_{L^\infty(\Omega)}$ with $r=1$ ($\mathcal{P}^1$ element) and $r=2$ ($\mathcal{P}^2$ element). Results are shown in Fig. \ref{fig:p1error} and Fig. \ref{fig:p2error}. We observe that the order is almost $O(h^{1+r})$: the optimal convergence rate is actually observed. This implies that our $L^\infty$ error estimate, Corollary \ref{cor2}, has room for improvement. 

\begin{figure}[bt]
\centering
   \subfloat[][$r=1$]{\includegraphics[scale=0.8]{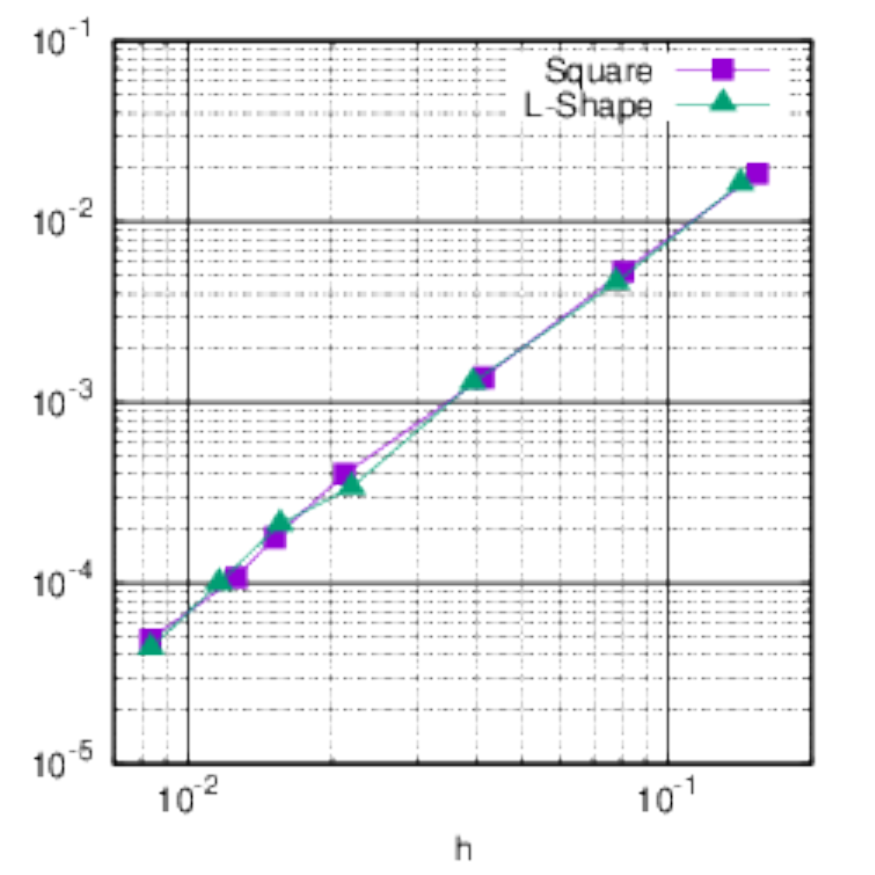}\label{fig:p1error}}
    \subfloat[][$r=2$]{\includegraphics[scale=0.8]{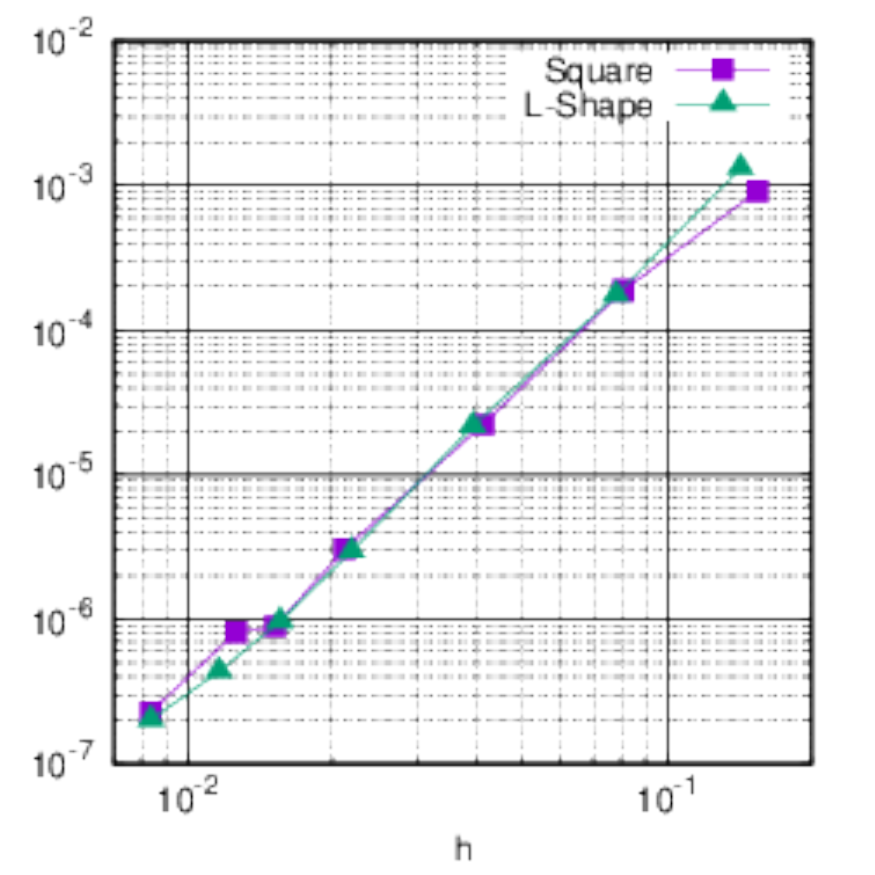}\label{fig:p2error}}
\caption{$L^\infty$ errors $\|u-u_h\|_{L^\infty(\Omega)}$}
\end{figure}

\section{Conclusion}
We have shown the interior error estimate and discrete weak maximum principle of the DG method for the Poisson equation. 
Those results are extensions of the standard FEM \cite{MR551291} to the DG method.
Moreover, we have derived the $L^\infty$ error estimate as an application of the discrete weak maximum principle. 
Unfortunately, our $L^\infty$ error estimate is only sub-optimal. 
The optimal rate has been proved in \cite{MR2113680} by another method. 
This implies that we need to deep consider the imposition of the Dirichlet boundary condition in the DG method. In particular, we will study more precise estimates of $\alpha(h)$ and $\norm{u-u_h}_{L^\infty(\partial \Omega)}$ in the future works.

\section*{Acknowledgment}
The first author was supported by Program for Leading Graduate Schools, MEXT, Japan.
The second author was supported by JST CREST Grant Number JPMJCR15D1,
Japan, and JSPS KAKENHI Grant Number 15H03635, Japan.


\bibliographystyle{spmpsci}
\bibliography{bibliography}
%
%
%
%
\end{document}